\numberwithin{equation}{section}
\theoremstyle{plain}
\newtheorem{Th}{Theorem}[section]
\newtheorem{Lemma}[Th]{Lemma}
\newtheorem{Cor}[Th]{Corollary}
\newtheorem{Prop}[Th]{Proposition}
\newtheorem{Def}[Th]{Definition}
\newcommand{\Q}{\mathbb{Q}}
\newcommand{\R}{\mathbb{R}}
\newcommand{\Z}{\mathbb{Z}}
\newcommand{\F}{\mathbb{F}}
\newcommand{\LF}{\mathrm{L}}
\newcommand{\ord}{\operatorname{ord}}
\newcommand{\Vol}{\operatorname{Vol}}
\newcommand{\Tr}{\operatorname{Tr}}
\begin{document}

\title{L-functions of certain exponential sums over finite fields II}

\author{Xin Lin}
\address{Department of Mathematics, Shanghai Maritime University, Shanghai 201306, PR China}
\email{xlin1126@hotmail.com}
%\thanks{X. Lin is supported by China Scholarship Council(No. 201906970038) and National Natural Science Foundation of China(No. 11771351, 11871317).}
%National Natural Science Foundation of China(No. 11771351, 11871317),

\author{Chao Chen}
\address{Department of Mathematics, University of California, Irvine, 
Irvine, CA 92697-3875, USA}
\email{chaoc12@uci.edu}

\subjclass[2010]{Primary 11S40, 11T23, 11L07}

\keywords{Exponential sums, L-function, Laurent polynomials, Newton polygon, Hodge polygon, Decomposition theory, Weight computation}

\begin{abstract}
In this paper, we compute the $q$-adic slopes of the L-functions of an important class of exponential sums arising from analytic number theory. Our main tools include Adolphson-Sperber's work on toric exponential sums and Wan's decomposition theorems. 
\end{abstract}

\maketitle

\section{Introduction}\label{sec1}
A fundamental problem in number theory is to estimate the reciprocal zeros and poles of L-functions of certain exponential sums over finite fields of characteristic $p$. Deligne's theorem on Riemann hypothesis gives general information for the complex absolute values of the zeros and poles. The theorem also indicates that the reciprocal roots and poles are $\ell$-adic units if $\ell$ is a prime and $\ell \neq p$. So the remaining unknown part is the $p$-adic estimate of the L-function which can be identified with the $p$-adic version of Riemann hypothesis for the L-function. In this paper, we study the $p$-adic slopes of L-functions of a special family of exponential sums.

Let $\F_{q}$ be the finite field of $q$ elements with characteristic $p$. For each positive integer $k$, let $\F_{q^k}$ denote the degree $k$ finite extension of $\F_{q}$ and $\F_{q^k}^*$ denote the set of non-zero elements in $\F_{q^k}$. Let $\psi: \F_{p} \rightarrow \mathbb{C}^*$ be a fixed nontrivial additive character over $\F_{p}$ and $\Tr_k: \F_{q^k} \rightarrow \F_{p}$ be the trace map. Let $n=\sum^{r}_{i=1} n_i$ be a partition of a positive integer $n$. We introduce $n$ variables 
$$x_{ij},\quad \text{for}\  1\leq i \leq r,\ 1\leq j\leq n_i,$$
and $n$ positive integral constants
$$b_{ij}, \quad \text{for}\  1\leq i \leq r,\ 1\leq j\leq n_i.$$ 

In this paper, we are concerned with the following class of exponential sums: for $a_i\in \F_{q}^*$ and $b_{ij}\in\Z_{>0}$,
\begin{equation*}
S_k(\vec{a})=\sum_{{\sum^{r}_{i=1}\frac{a_i}{\prod_{j=1}^{n_i}x_{ij}^{b_{ij}}}=1}}\psi\left(\Tr_k\left(\sum_{i=1}^r\sum^{n_i}_{j=1}x_{ij}\right)\right),
\end{equation*}
where the sum is over all $x_{ij} \in \F^*_{q^k}$. Since the case of $ b_{ij}$ divisible by $p$ can be reduced to $p \nmid b_{ij}$ through a change of variables, in this paper we assume $p \nmid b_{ij}$ for $1\leq i \leq r,\ 1\leq j\leq n_i$. 

The estimate of $S_k(\vec{a})$ plays an important role in analytic number theory when $n=4$, $n_i=2$ and $b_{ij}=1$. It is a crucial ingredient in Heath Brown's work on the divisor function $d_3(n)$ in arithmetic progressions\cite{heath1986divisor}. It also appears in Friedlander and Iwaniec's work on estimating certain averages of incomplete Kloosterman sums in application to the divisor problem of $d_3(n)$\cite{friedlander1985incomplete}. Relying on the estimate of $S_k(\vec{a})$ and Friedlander-Iwaniec's result, Zhang gained a boundary of the error terms in his work on twin prime conjecture\cite{zhang2014bounded}. 

For the complex absolute values of exponential sums, Katz \cite{MR887396} proved that 
\begin{equation*}
|S_k(\vec{a})|\leq c_1q^{(n-1)k/2},
\end{equation*}
where $c_1=\prod^{r}_{i=1}\left(1+\sum_{j=1}^{n_i}b_{ij}\right)-1$.

To understand the $p$-adic absolute values of the exponential sums, we study $p$-adic slopes of the reciprocal roots and poles of the associated L-function
\begin{align}\label{eq11}
\LF(\vec{a},T)=\exp  \left(\sum^\infty _ {k=1} S_k(\vec{a}) \frac{T^k}{k}\right).
\end{align}
When $n=4$, $n_i=2$ and $b_{ij}=1$, our previous paper\cite{chen2020lfunctions} showed that
\begin{equation*}
	\LF(\vec{a},T)=(1-T)(1-qT)\prod_{i=1}^6(1-\alpha_iT),
	\end{equation*}
where the $p$-adic norms of the reciprocal roots are given by
\begin{align*}
		|\alpha_i|_p=
		\begin{cases}
			1 &\text{if}\quad i =1.\\
			q^{-1} &\text{if}\quad i= 2,3.\\
			q^{-2} &\text{if}\quad i= 4,5.\\
			q^{-3} &\text{if}\quad i= 6.
		\end{cases}
	\end{align*}
In this paper, we obtain $q$-adic slopes for the above general class of exponential sums. Note that our main result is consistent with Katz's complex estimate.
\begin{Th}\label{thm1} Let $D= \mathrm{lcm}\{b_{ij}| 1\leq i \leq r, 1\leq j \leq n_i\}$. 
\begin{enumerate}
\item The associated L-function is a polynomial  given by
\begin{align*}	
	\LF(\vec{a},T)^{(-1)^n}&=(1-T)^{r-1}\prod^{d-r}_{i=1}(1-\alpha_iT),
\end{align*}		
	where  $d=\prod^{r}_{i=1}\left(1+\sum_{j=1}^{n_i}b_{ij}\right)$.
\item If $p \equiv 1 (\bmod D)$, for each $m=0,1,\ldots, nD$, the number of reciprocal zeros of $\LF(\vec{a},T)^{(-1)^{n}}$ with $q$-adic slope $m/D$ is the coefficient of $x^{m+D}$ in the following generating function 
\begin{align*}
		G(x)=&\left(1-x^D\right)^{n-r}\prod^{r}_{i=1}\frac{1-x^{(1+\sum^{n_i}_{j=1}\frac{1}{b_{ij}})D}}{\prod_{j=1}^{n_i}\left(1-x^{D/b_{ij}}\right)},
		\end{align*}
and for any rational number $m\notin \{0,1,\ldots, nD\}$, there is no reciprocal zero of $\LF(\vec{a},T)^{(-1)^{n}}$ with $q$-adic slope $m/D$. 
\end{enumerate}	
	
\end{Th}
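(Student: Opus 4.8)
The plan is to realize $S_k(\vec a)$, up to an elementary factor, as a toric exponential sum, use Adolphson--Sperber's theory to pin down the shape and degree of $\LF(\vec a,T)$, and then determine the $q$-adic slopes by showing that the $q$-adic Newton polygon of the resulting polynomial coincides with its Hodge polygon when $p\equiv 1\ (\mathrm{mod}\ D)$, the Hodge polygon being evaluated combinatorially from the block structure. First I would introduce an auxiliary variable $x_0$ and use that $\sum_{x_0\in\F_{q^k}}\psi(\Tr_k(x_0y))$ equals $q^k$ if $y=0$ and $0$ otherwise, to obtain
\[
q^k S_k(\vec a)=\sum_{x_0,x_{ij}\in\F_{q^k}^*}\psi\!\left(\Tr_k\!\Big(x_0\sum_{i=1}^{r}\frac{a_i}{\prod_j x_{ij}^{b_{ij}}}-x_0+\sum_{i,j}x_{ij}\Big)\right)+(-1)^n,
\]
where $(-1)^n$ is the contribution of $x_0=0$. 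Writing $f=f(\vec a)$ for the Laurent polynomial in $x_0,(x_{ij})$ on the right and $\LF^*(f,T)$ for the $L$-function of its toric sum, this yields $\LF(\vec a,qT)=\LF^*(f,T)(1-T)^{-(-1)^n}$. One then checks that $f$ is nondegenerate with respect to its Newton polytope $\Delta:=\Delta_\infty(f)$, the convex hull in $\R^{n+1}$ of $0$, $e_0$, the $e_{ij}$ and the points $v_i=e_0-\sum_j b_{ij}e_{ij}$ (with $e_0,e_{ij}$ the standard basis vectors attached to the variables): on each face $\sigma$ of $\Delta$ not containing $0$ some partial derivative of $f_\sigma$ is a nonzero constant or a nonvanishing monomial, according to whether $\sigma$ avoids all the $v_i$ or contains some $v_i$, using $a_i\in\F_q^*$ and $p\nmid b_{ij}$; thus the Jacobian criterion holds.

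For Part (1), Adolphson--Sperber's theorem gives that $\LF^*(f,T)^{(-1)^n}$ is a polynomial of degree $(n+1)!\,\Vol(\Delta)$. I would compute this volume by slicing $\Delta$ with the hyperplanes $\{x_0=t\}$, $0\le t\le 1$: the slice is the Minkowski combination $(1-t)\sigma_0+t\sigma_1$ of the standard $n$-simplex $\sigma_0$ (the slice at $x_0=0$) and the $r$-dimensional simplex $\sigma_1=\mathrm{conv}\big(0,-\sum_j b_{1j}e_{1j},\dots,-\sum_j b_{rj}e_{rj}\big)$ (the slice at $x_0=1$, translated by $-e_0$), so that integrating the mixed-volume expansion in $t$ gives $(n+1)!\,\Vol(\Delta)=\prod_{i=1}^{r}\big(1+\sum_j b_{ij}\big)=d$. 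Cancelling the factor $(1-T)$ produced by the $(-1)^n$-term and substituting $T\mapsto T/q$ yields the claimed factorization; the presence of the specific factor $(1-T)^{r-1}$ --- that exactly $r-1$ of the remaining reciprocal zeros are equal to $1$ --- is pinned down by a direct analysis of the faces of $\Delta$ through the origin, which contribute ``trivial'' factors of the form $(1-q^iT)$.

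For Part (2), Adolphson--Sperber further gives that the $q$-adic Newton polygon of $\LF^*(f,T)^{(-1)^n}$ lies on or above the Hodge polygon $\mathrm{HP}(\Delta)$, and the crux is the reverse inequality when $p\equiv 1\ (\mathrm{mod}\ D)$. Here I would invoke Wan's decomposition theory: the domains of linearity of the weight function $w$ of $\Delta$ subdivide the cone $\mathrm{cone}(\Delta)=\bigcup_{c\ge 0}c\Delta$ into simplicial pieces adapted to the $r$ blocks, each giving a ``diagonal'' local factor whose reciprocal roots are, up to $p$-adic units, products of Gauss sums attached to the exponents $b_{ij}$; since $p\equiv 1\ (\mathrm{mod}\ D)$ forces $\mu_D\subset\F_p$, Stickelberger's theorem computes the $q$-adic valuations of these Gauss sums exactly, and they meet the Hodge bound. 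Summing over the pieces gives $\mathrm{NP}=\mathrm{HP}$, so the $d$ reciprocal roots $\gamma_j$ of $\LF^*(f,T)^{(-1)^n}$ have $q$-adic slopes $m/D$ with multiplicities the Hodge numbers $h_\Delta(m)$.

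It then remains to evaluate $\sum_m h_\Delta(m)x^m=(1-x^D)^{n+1}\sum_{u\in\,\mathrm{cone}(\Delta)\cap\Z^{n+1}}x^{D\,w(u)}$. Using the block-compatible subdivision above, the lattice-point series factors, block $i$ contributing $(1-x^{(1+\sum_j 1/b_{ij})D})/\prod_j(1-x^{D/b_{ij}})$, with a gluing factor $(1-x^D)^{n-r}$ reflecting the shared variable $x_0$; this identifies $\sum_m h_\Delta(m)x^m$ with $G(x)$. Finally, $h_\Delta(0)=1$ (only $0$ has weight $0$) and the unique slope-$0$ root of $\LF^*(f,T)^{(-1)^n}$ equals $1$, which is precisely the root removed by the factor $(1-T)^{-(-1)^n}$; hence after $T\mapsto T/q$, which lowers every slope by $1$, the number of reciprocal zeros of $\LF(\vec a,T)^{(-1)^n}$ of slope $m/D$ equals $h_\Delta(m+D)$, i.e.\ the coefficient of $x^{m+D}$ in $G(x)$, while slopes outside $\{m/D:0\le m\le nD\}$ cannot occur because $h_\Delta(m)=0$ for $m>(n+1)D$ and for $0<m<D$. \emph{The main obstacle} I expect is twofold: constructing a subdivision of the $(n+1)$-dimensional polytope $\Delta$ that is unimodular enough to make the lattice-point count explicit and at the same time compatible with the block decomposition so that the product shape of $G(x)$ emerges; and making the ordinariness argument of the previous step uniform over all the resulting pieces --- exactly the point where the hypothesis $p\equiv 1\ (\mathrm{mod}\ D)$ is indispensable.
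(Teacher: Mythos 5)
Your plan follows the paper's own route almost step for step: the same passage to the toric sum via the auxiliary variable, Adolphson--Sperber for polynomiality together with the degree $(n+1)!\,\Vol(\Delta)=d$, Wan's facial and boundary decompositions plus the diagonal (Stickelberger) criterion for ordinariness when $p\equiv 1\ (\mathrm{mod}\ D)$, and the slope shift by $1$ with cancellation of the trivial factor under $T\mapsto T/q$. The genuinely different touches are minor: you compute the volume by Cayley/mixed-volume slicing instead of summing $|\det M(\Delta_k)|$ over the facial simplices, and you check non-degeneracy by exhibiting a monomial partial derivative rather than via the determinant criterion; both can be made to work, but both already presuppose the combinatorial description of the facets not through the origin (each contains $V_{n+1}$, and whenever it contains the ``pole'' vertex of block $i$ it omits exactly one coordinate vertex of that block), which is the content of the paper's Propositions 3.2--3.5 and is nowhere established in your sketch. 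Likewise, for the factor $(1-T)^{r-1}$ only divisibility is needed, and the exponent $r-1$ comes from the $n+r$ one-dimensional cones of $C(\Delta)$ fed through Dwork's alternating formula; your ``direct analysis of the faces through the origin'' gestures at this but does not produce the number $r-1$.

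The genuine gap is in part (2), at exactly the point where the paper spends its main effort (its Theorem 3.9): you assert that the lattice-point series of the weight function ``factors'', block $i$ contributing $(1-x^{(1+\sum_j 1/b_{ij})D})/\prod_j(1-x^{D/b_{ij}})$ with a gluing factor $(1-x^D)^{n-r}$, but no subdivision is produced and no count is carried out. This is where all the work and all the delicacy lie: one must determine the weight-minimizing representations $u=\sum_l c_lV_l$, control the denominators of the $c_l$ (the coefficient attached to $V_{n+1+i}$ is in general only in $\tfrac{1}{b_{ij}}\Z$, and conversely not every admissible coefficient vector yields a lattice point once some $b_{ij}>1$), and then run an inclusion--exclusion over which coefficient vanishes in each block. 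Your own bookkeeping shows this step cannot be waved through: you need $h_\Delta(m)=0$ for $0<m<D$ so that the slope counts in part (2) add up to the degree $d-1$ from part (1), yet the asserted $G(x)$ has nonzero coefficients below degree $D$ as soon as some $b_{ij}\ge 2$ (already $r=1$, $n_1=1$, $b_{11}=2$ gives $G(x)=1+x+x^2$, whose coefficients in degrees $m+D$ total $1\neq d-1=2$, while a direct computation of that exponential sum gives two unit roots; and for a block with exponents $(2,3)$ one even has $G(1)\neq d$). So the identification of the Hodge generating series and the vanishing of the low-weight terms must actually be proved, not asserted; as written, this portion of the proposal restates the conclusion rather than proving it, and the tension above indicates the computation has to be done with real care.
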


\begin{Cor}
Assume $b_{ij}=1$ ($1\leq i \leq r, 1 \leq j \leq n_i$).
\begin{enumerate}
\item The polynomial $\LF(\vec{a},T)^{(-1)^{n}}$ has degree $\prod^{r}_{i=1}\left(1+n_i\right)-1$.
\item Let $d=\prod^{r}_{i=1}\left(1+n_i\right)$. If $n_i$ is even for $1\leq i\leq n$, we have
	\begin{align*}	
			\LF(\vec{a},T)^{(-1)^n}&= (1-T)^{r-1}(1-\gamma_0T)(1-qT)^{\frac{r^2-r}{2}}\prod^{r}_{i=1}(1-\gamma_iT)\prod^{d-\frac{r^2+3r+2}{2}}_{j=1}(1-\alpha_jT),
			\end{align*}
			where $\ord_q\gamma_0=0$, $\ord_q\gamma_i=1$ $(1\leq i \leq r)$, $|\gamma_i|=q^{\frac{n-1}{2}}$ $(0\leq i \leq r)$, $\ord_q\alpha_j > 1$ and $|\alpha_j |\leq q^{\frac{n-1}{2}}$.
\item For each $m=0,1,2,\ldots, n$, the number of reciprocal zeros of $\LF(\vec{a},T)^{(-1)^{n}}$ with $q$-adic slope $m$ is the coefficient of $x^{m+1}$ in the following generating function
	\begin{align*}
	G_1(x)=\prod_{i=1}^{r}\left(1+x+\cdots+x^{n_i}\right),
	\end{align*}
and for any rational number $m\notin \{0,1,\ldots, n\}$, there is no reciprocal zero of $\LF(\vec{a},T)^{(-1)^{n}}$ with $q$-adic slope $m$.
\end{enumerate}	

\end{Cor}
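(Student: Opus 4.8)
The plan is to derive the Corollary as a specialization of Theorem \ref{thm1} to the case $b_{ij}=1$, where $D=\mathrm{lcm}\{b_{ij}\}=1$ and the slopes become integers. Part (1) is immediate: substituting $b_{ij}=1$ into $d=\prod_{i=1}^r(1+\sum_{j=1}^{n_i}b_{ij})=\prod_{i=1}^r(1+n_i)$ and into the formula $\LF(\vec a,T)^{(-1)^n}=(1-T)^{r-1}\prod_{i=1}^{d-r}(1-\alpha_iT)$ gives total degree $(r-1)+(d-r)=d-1=\prod_{i=1}^r(1+n_i)-1$.

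For part (3), the generating function $G(x)$ in Theorem \ref{thm1} collapses when $D=1$: the factor $(1-x^D)^{n-r}=(1-x)^{n-r}$, and each factor $\frac{1-x^{(1+\sum_j 1/b_{ij})D}}{\prod_j(1-x^{D/b_{ij}})}=\frac{1-x^{1+n_i}}{(1-x)^{n_i}}$, so since $n=\sum_i n_i$,
\begin{align*}
G(x)=(1-x)^{n-r}\prod_{i=1}^r\frac{1-x^{1+n_i}}{(1-x)^{n_i}}=(1-x)^{n-r}\cdot\frac{\prod_{i=1}^r(1-x^{1+n_i})}{(1-x)^{n}}=\frac{\prod_{i=1}^r(1-x^{1+n_i})}{(1-x)^{r}}.
\end{align*}
Thus $G(x)=\prod_{i=1}^r\frac{1-x^{1+n_i}}{1-x}=\prod_{i=1}^r(1+x+\cdots+x^{n_i})=G_1(x)$, and the number of reciprocal zeros with slope $m$ is the coefficient of $x^{m+D}=x^{m+1}$ in $G(x)$, which is the coefficient of $x^{m+1}$ in $G_1(x)$; the vanishing outside $\{0,\ldots,n\}$ follows since $\deg G_1=n$. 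One caveat: Theorem \ref{thm1}(2) requires $p\equiv 1\pmod D$, which here reads $p\equiv 1\pmod 1$ and is automatic, so no hypothesis is lost.

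For part (2), I would read off the low-slope structure from $G_1(x)=\prod_{i=1}^r(1+x+\cdots+x^{n_i})$. The coefficient of $x^0$ is $1$, giving exactly one reciprocal zero of slope $0$; combined with the known factor $(1-T)^{r-1}$ of slope $0$ this must be split off, so one identifies $\gamma_0$ as the unique slope-$0$ reciprocal root not among the $r-1$ copies of $1$ — but here $G_1$ has $x^1$-coefficient $r$ and the factor $(1-T)^{r-1}$ accounts for $r-1$ of the slope-$0$ count, wait: the slope-$0$ count from $G_1$ is the coefficient of $x^{0+1}=x^1$, which is $r$. So there are $r$ reciprocal zeros of slope $0$, of which $r-1$ are the trivial factor $(1-T)^{r-1}$, leaving the single nontrivial $\gamma_0$ with $\ord_q\gamma_0=0$. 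The slope-$1$ count is the coefficient of $x^2$ in $\prod_i(1+x+\cdots+x^{n_i})$; since each $n_i\geq 2$ (being even and positive), every factor contributes, giving $\binom{r}{2}+r=\frac{r^2-r}{2}+r$, wait — the coefficient of $x^2$ is $\binom{r}{2}$ (choosing $x^1$ from two factors) plus $r$ (choosing $x^2$ from one factor), i.e. $\frac{r^2+r}{2}$; of these, $\frac{r^2-r}{2}$ are forced to be $q$ (these should come from a separate argument, perhaps a symmetry/functional-equation or Hodge-polygon lower-bound argument showing $(1-qT)^{(r^2-r)/2}$ divides the L-function), leaving $r$ roots $\gamma_i$ with $\ord_q\gamma_i=1$. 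The weights $|\gamma_i|=q^{(n-1)/2}$ and $|\alpha_j|\leq q^{(n-1)/2}$ follow from Katz's/Deligne's purity (the L-function being pure of weight $n-1$ up to the trivial factors), and $\ord_q\alpha_j>1$ follows because all slope-$0$ and slope-$1$ roots have been exhausted. The main obstacle is justifying the precise split of the slope-$1$ roots into $(r^2-r)/2$ copies of $q$ versus $r$ roots $\gamma_i$ of weight exactly $q^{(n-1)/2}$: this requires more than the slope generating function — presumably an appeal to the factor $(1-qT)^{\binom{r}{2}}$ coming from the $\binom{r}{2}$ ways the defining equation $\sum_i a_i/\prod_j x_{ij}=1$ degenerates (a boundary/face contribution in the Adolphson–Sperber toric picture), together with the purity statement of Corollary-level weight bounds. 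I would isolate that as a short lemma, citing the Dwork–Adolphson–Sperber and Wan decomposition machinery already invoked for Theorem \ref{thm1}, and otherwise the Corollary is a direct computation.
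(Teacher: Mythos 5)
Your treatment of parts (1) and (3) is correct and is exactly the specialization the paper intends: with $b_{ij}=1$ one has $D=1$, the congruence $p\equiv 1 \pmod D$ is vacuous, the degree count $(r-1)+(d-r)=d-1$ gives (1), and your simplification $G(x)=(1-x)^{n-r}\prod_{i=1}^r\frac{1-x^{1+n_i}}{(1-x)^{n_i}}=\prod_{i=1}^r(1+x+\cdots+x^{n_i})=G_1(x)$ matches Corollary \ref{corow} and Theorem \ref{thm312}.

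Part (2), however, is not proved, and this is where the real content of the statement lies. The slope generating function only tells you that there are $r$ slope-$0$ roots and $\frac{r^2+r}{2}$ slope-$1$ roots; it cannot tell you that exactly $\frac{r^2-r}{2}$ of the slope-$1$ roots are literally equal to $q$ while the remaining $r$ are not. You flag this as the ``main obstacle'' and defer it to an unproven lemma, but that lemma is precisely what the paper establishes in Theorem \ref{thm35} and Corollary \ref{corol}: one applies Wan's boundary decomposition to the low-dimensional open cones of $C(\Delta)$ together with Dwork's trace formula (formulas (\ref{eq24})--(\ref{eq25})) to pin down the factors $(1-T)(1-qT)^{r-1}(1-q^2T)^{\frac{r^2-r}{2}}$ of $\LF^*(f,T)^{(-1)^n}$, uses ordinarity (so the slope-one part of $\det(I-TA_a(f))$ has length $W_\Delta(1)=n+r+1$), and then uses that $f$ is odd when all $n_i$ are even and $b_{ij}=1$ (so the reciprocal-root set is stable under complex conjugation) together with the weight relation $\ord_q\beta+\ord_q\overline{\beta}=\omega\in\Z\cap[0,n+1]$ and the Hodge numbers $H_\Delta(0)=H_\Delta(n)=1$, $H_\Delta(1)=r$ to rule out an extra root equal to $q$ or $q^2$. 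None of this is recoverable from the generating function alone.

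The second gap is the weight assertion $|\gamma_i|=q^{\frac{n-1}{2}}$ for $0\le i\le r$. Your appeal to ``purity up to trivial factors'' is not available: the corollary itself exhibits roots of weight $0$ (the factors $1-T$) and weight $2$ (the factors $1-qT$) alongside the claimed weight-$(n-1)$ roots, so the nontrivial part of $\LF(\vec a,T)^{(-1)^n}$ is not pure, and Katz's estimate combined with Deligne only yields the upper bound $|\alpha|\le q^{\frac{n-1}{2}}$. Getting the exact weights of $\gamma_0,\dots,\gamma_r$ requires an additional argument of the same flavor as above: e.g.\ pair each root with its complex conjugate (legitimate only because $f$ is odd), use that slope plus conjugate slope equals the weight, and play the symmetry $H_\Delta(k)=H_\Delta(n-k)$ of the slope counts against the already-identified factors $(1-T)^{r-1}(1-qT)^{\frac{r^2-r}{2}}$ to force the unique slope-$(n-1)$ root to be $\overline{\gamma_0}$ and the $r$ slope-$(n-2)$ roots to be $\overline{\gamma_1},\dots,\overline{\gamma_r}$. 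As written, your proposal establishes (1) and (3) but leaves both the $(1-qT)^{\frac{r^2-r}{2}}$ identification and the equalities $|\gamma_i|=q^{\frac{n-1}{2}}$ unjustified, so part (2) remains open in your argument.
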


Our approach is to reduce this theorem to the L-function of toric exponential sums 
and then apply the systematic results available for such toric L-functions. Let $N_i=\sum^{i}_{l=1}n_l$ for $1\leq i \leq r$ and $N_0=0$. Let $x_{ij}=x_{N_{i-1}+j}$ and $b_{ij}=b_{N_{i-1}+j}$ ($1\leq i \leq r$, $1\leq j \leq n_i$). Consider a Laurent polynomial $f\in \F_{q}[x_1^{\pm1},\ldots, x_{n+1}^{\pm1}]$ defined by
\begin{equation*}
f(x_1,x_2,\ldots,x_{n+1})=\sum_{i=1}^{n}x_i+x_{n+1}\left({\sum^{r}_{i=1}\frac{a_i}{\prod_{l=N_{i-1}+1}^{N_i}x_{l}^{b_{l}}}}-1\right),
\end{equation*}
where $a_i\in \F_{q}^*$ and $b_{l}\in\Z_{>0}$. For any positive integer $k$, let 
 \begin{equation*}
 S^*_k(f)=\sum_{x_i\in \F^*_{q^k}}\psi(\Tr_k(f))
 \end{equation*}
 be the associated exponential sum. Its generating L-function is defined to be
 \begin{equation*}
 \LF^*(f,T)=\exp  \left(\sum^\infty _ {k=1} S^*_k(f) \frac{T^k}{k}\right).
 \end{equation*}

The following equation describes the relationship between $S_k(\vec{a})$ and  $S^*_k(f)$,
 \begin{equation*}
 S_k(\vec{a})=\frac{(-1)^n}{q^k}+\frac{1}{q^k}S^*_k(f).
 \end{equation*}
Based on the relationship, it's easy to check that
\begin{align}\label{eq12}
\LF(\vec{a},T)^{(-1)^n}=\frac{1}{1-T/q}\LF^*\left(f,T/q\right)^{(-1)^n}.
\end{align}
So it suffices to evaluate the reciprocal roots or poles of $\LF^*(f,T)$. By Adolphson and Sperber's theorem\cite{AS1989},  $\LF^*(f,T)^{(-1)^{n}}$ is a polynomial if $p\nmid \mathrm{lcm}\{b_{ij}\}$. Under this assumption, we compute the $q$-adic Newton polygon of $\LF^*(f,T)^{(-1)^{n}}$, which indicates the $q$-adic slope information of the L-function. Furthermore, using Wan's decomposition theorem, we show that the L-function has the following form
\begin{align*}	
	\LF^*(f,T)^{(-1)^n}&=(1-T)(1-qT)^{r-1}\prod^{d-r}_{i=1}(1-\beta_iT),
\end{align*}
	where  $d=\prod^{r}_{i=1}\left(1+\sum_{j=1}^{n_i}b_{ij}\right)$. This leads to our main theorem.

This paper is organized as follows. In section \ref{sec2}, we review some technical methods and theorems including Adolphson-Sperber's theorems and Wan's decomposition theorems. In section \ref{sec3}, we prove the main results.

\section{Preliminaries}\label{sec2}
\subsection{Rationality of the generating L-function}\label{subsec21}
Let $f\in\F_{q}\left[x_1^{\pm1}, \ldots, x_n^{\pm1} \right]$ be a Laurent polynomial and its associated exponential sum is defined to be 
\begin{align}\label{eq1}
S^*_k(f)=\sum_{x_i \in \F^*_{q^k}} \psi(\Tr_k(f)),
\end{align}
where $\Tr_k: \F_{q^k} \rightarrow \F_{p}$ is the trace map and $\psi: \F_{p} \rightarrow \mathbb{C}^*$ is a fixed nontrivial additive character. 
It's a classical problem to give a good estimate for the valuations of $S^*_k(f)$ in analytic number theory. In order to compute the absolute values of the exponential sums, we usually study the generating L-function of $S^*_k(f)$ given by
\begin{equation*}
\LF^*(f,T)=\exp  \left(\sum^\infty _ {k=1} S^*_k (f) \frac{T^k}{k} \right) \in \Q(\zeta_p)[[T]].
\end{equation*}
By a theorem of Dwork-Bombieri-Grothendieck\cite{Dwork1962OnTZ, Grothendieck1964}, the generating L-function is a rational function,
\begin{equation*}
\LF^*(f,T)=\frac{\prod^{d_1}_{i=1}(1-\alpha_iT)}{\prod^{d_2}_{j=1}(1-\beta_jT)},
\end{equation*}
where all the reciprocal zeros and poles are non-zero algebraic integers. After taking logarithmic derivatives, we have the formula
\begin{align}\label{expo}
S_k^*(f)=\sum^{d_2}_{j=1}\beta_j^k-\sum^{d_1}_{i=1}\alpha_i^k, \quad k \in \Z_{\geq1},
\end{align}
which implies that the zeros and poles of the generating L-function contain critical information about the exponential sums.

From Deligne's theorem on Riemann hypothesis \cite{Deligne1980}, the complex absolute values of reciprocal zeros and poles are bounded as follows
\begin{equation*}
|\alpha_i|=q^{u_i/2}, |\beta_j|=q^{v_j/2}, u_i\in \Z \cap [0,2n], v_j\in \Z \cap [0,2n].
\end{equation*}
For non-archimedean absolute values, Deligne\cite{Deligne1980} proved that $|\alpha_i|_\ell=|\beta_j|_\ell=1$ when $\ell$ is a prime and $\ell\neq p$. Depending on Deligne's integrality theorem, we have the following estimates for $p$-adic absolute values
\begin{equation*}
|\alpha_i|_p=q^{-r_i}, |\beta_j|_p=q^{-s_j}, r_i\in \Q \cap [0,n], s_j\in \Q \cap [0,n].
\end{equation*}
The integer $u_i$ (resp. $v_j$) is called the \textit{weight} of $\alpha_i$ (resp. $\beta_j$) and the rational number $r_i$
 (resp. $s_j$) is called the \textit{slope} of $\alpha_i$ (resp. $\beta_j$). In the past few decades, it has been tremendous interest in determining the weights and slopes of the generating L-functions. Without any further condition on the Laurent polynomial $f$ or prime $p$, it's even hard to determine the number of reciprocal roots and poles. Adolphson and Sperber\cite{AS1989} proved that under a suitable smoothness condition of $f$ in $n$ variables, the associated L-function $\LF^*(f,T)^{(-1)^{n-1}}$ is a polynomial and one can determine the slopes of the reciprocal roots using Newton polygons. Adolphson and Sperber\cite{AS1989} also proved that if $\LF^*(f,T)^{(-1)^{n-1}}$ is a polynomial, its Newton polygon has a lower bound called Hodge polygon. The basic definitions of the Newton polygon and the Hodge polygon will be discussed in the next subsection.

\subsection{Newton polygon and Hodge polygon}\label{subsec22}
Let 
\begin{equation*}
f(x_1, \ldots x_n)=\sum^J_{j=1} a_j x^{V_j}
\end{equation*}
 be a Laurent polynomial with $a_j \in \F ^*_{q}$ and $V_j=(v_{1j},\ldots, v_{nj})\in \Z^n$ $(1\leq j\leq J)$. The \emph{Newton polyhedron} of $f$, $\Delta(f)$, is defined to be the convex closure in $\R^n$ generated by the origin and the lattice points $V_j$ ($1\leq j \leq J$). For $\delta \subset \Delta(f)$, let the Laurent polynomial
\begin{equation*}
f^{\delta}=\sum_{V_j\in \delta}a_j x^{V_j}
\end{equation*}
be the restriction of $f$ to $\delta$. 
 \begin{Def}
 	A Laurent polynomial $f$ is called non-degenerate if for each closed face $\delta$ of $\Delta(f)$ of arbitrary dimension which doesn't contain the origin, the $n$-th partial derivatives 
 	\begin{equation*}
 	\left\{ \frac{\partial f^{\delta}}{\partial x_1 }, \ldots, \frac{\partial f^{\delta}}{\partial x_n} \right\}
 	\end{equation*} 
 	have no common zeros with $x_1\ldots x_n \neq 0$ over the algebraic closure of $\F_q$.
 \end{Def}
\begin{Th}[Adolphson and Sperber\cite{AS1989}]\label{thm2}
	For any non-degenerate $f\in  \F_{q}[x_1^{\pm1},\ldots, x_n^{\pm1}]$, the associated L-function $\LF^*(f,T)^{(-1)^{n-1}}$ is of the following form, 
	\begin{equation*}
	\LF^*(f,T)^{(-1)^{n-1}}=\prod^{n! \Vol(\Delta(f))}_{i=1}(1-\alpha_i T),
	\end{equation*}
	where $|\alpha_i|=q^{\omega_i/2}$, $\omega_i\in \Z\cap [0,n]$ and $i=1,2,\ldots, n!\Vol(\Delta).$
\end{Th}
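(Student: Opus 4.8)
The plan is to carry out Dwork's $p$-adic cohomological method in the form developed by Adolphson and Sperber, so that the two assertions become, respectively, a vanishing theorem for a Koszul-type complex and a dimension count governed by $\Delta(f)$. Fix $q=p^{a}$, choose $\pi\in\overline{\Q}_{p}$ with $\pi^{p-1}=-p$, and let $\theta(t)=\exp\!\big(\pi(t-t^{p})\big)=\sum_{m\ge 0}\lambda_{m}t^{m}$ be the Dwork splitting function; its value $\theta(1)$ is a primitive $p$-th root of unity and the $\lambda_{m}$ decay $p$-adically linearly in $m$. Writing $f=\sum_{j=1}^{J}a_{j}x^{V_{j}}$ with Teichm\"uller lifts $\hat a_{j}$, set $F(x)=\prod_{j=1}^{J}\theta(\hat a_{j}x^{V_{j}})$. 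The monomials of $F$ are supported in the cone $C(\Delta(f))$ over $\Delta(f)$, and the valuation of the coefficient of $x^{u}$ in $F$ is bounded below by a fixed positive multiple of the piecewise-linear \emph{weight} $w(u)$ attached to $\Delta(f)$ (the least $c\ge 0$ with $u\in c\,\Delta(f)$). Dwork's trace formula then expresses $S^{*}_{k}(f)$ as an alternating sum of traces of $\alpha^{k}$, where $\alpha$ is a completely continuous Frobenius operator on a $p$-adic Banach space $B$ of such power series, and consequently identifies $\LF^{*}(f,T)^{(-1)^{n-1}}$ with the alternating product $\prod_{i=0}^{n}\det\!\big(I-T\alpha\mid H^{i}(\Omega^{\bullet})\big)^{(-1)^{n-i}}$, where $\Omega^{\bullet}=\big(B\otimes\textstyle\bigwedge^{\bullet}\overline{\Q}_{p}^{\,n},\,D_{1},\dots,D_{n}\big)$ is the Dwork complex built from the twisted logarithmic derivations $D_{i}=x_{i}\partial_{x_{i}}+\pi\,x_{i}\partial_{x_{i}}(\hat f)$.

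The decisive step is to prove that $\Omega^{\bullet}$ is \emph{acyclic outside top degree}, i.e. $H^{i}(\Omega^{\bullet})=0$ for $i<n$. Filtering $B$ by the weight $w$, the associated graded ring is, up to a dilation of exponents, the semigroup ring $R$ of $C(\Delta(f))\cap\Z^{n}$ graded by $w$, and the principal symbols of the $D_{i}$ become multiplication by the images in $R$ of the forms $x_{i}\partial_{x_{i}}f$. The non-degeneracy hypothesis is precisely Kouchnirenko's condition that these $n$ elements form a regular sequence in $R$, equivalently that the Koszul complex of the symbols over $R$ has no homology in positive degrees; one then lifts this exactness through the $w$-filtration by the standard successive-approximation (filtration spectral sequence) argument, using Dwork's convergence estimates to control the $p$-adic completion, and concludes $H^{i}(\Omega^{\bullet})=0$ for $i<n$. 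Hence $\LF^{*}(f,T)^{(-1)^{n-1}}=\det\!\big(I-T\alpha\mid H^{n}(\Omega^{\bullet})\big)$ is a polynomial.

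Its degree is $\dim H^{n}(\Omega^{\bullet})$, and by the acyclicity just established this equals the Euler characteristic of the Koszul complex of the regular sequence $\{\,\overline{x_{i}\partial_{x_{i}}f}\,\}$ in $R$, that is, the Hilbert--Samuel multiplicity $\dim R/(\overline{x_{1}\partial_{x_{1}}f},\dots,\overline{x_{n}\partial_{x_{n}}f})$. A Hilbert-series computation --- comparing $\dim R_{m}=\#\{u\in C(\Delta(f))\cap\Z^{n}:w(u)\le m\}\sim m^{n}\Vol(\Delta(f))$ with the $n$ forms that are divided out --- evaluates this multiplicity as $n!\,\Vol(\Delta(f))$; this is the combinatorial content of Kouchnirenko's formula. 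When some faces of $\Delta(f)$ fail to meet all coordinate hyperplanes the ring $R$ is less well behaved, and this case is reduced to the convenient one by a small deformation of $\Delta(f)$, which does not change $n!\,\Vol(\Delta(f))$ in the limit. Part (1) of the statement follows.

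It remains to pin down the complex absolute values. The crude bound $|\alpha_{i}|\le q^{n/2}$ comes from Dwork's operator-norm estimates for $\alpha$ together with the Newton-above-Hodge inequality for the characteristic polynomial of $\alpha$ on $H^{n}(\Omega^{\bullet})$. The sharper assertion --- that each weight $\omega_{i}$ is an \emph{integer} in $[0,n]$ and that $|\alpha_{i}|=q^{\omega_{i}/2}$ exactly --- is obtained by identifying $H^{n}(\Omega^{\bullet})$ (through the Dwork--Monsky comparison, with the acyclicity above transported to the $\ell$-adic side) with the middle-degree cohomology $H^{n}_{c}\big(\mathbb{G}_{m,\overline{\F}_{q}}^{\,n},\mathcal{L}_{\psi}(f)\big)$ and invoking Deligne's Weil II: this cohomology is mixed with integer weights in $[0,n]$, while Deligne's integrality theorem gives the lower bound $0$. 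This completes part (2). The step I expect to be the main obstacle is the acyclicity theorem: extracting from the geometric non-degeneracy condition a genuine regular sequence on the (generally singular) toric ring $R$, and lifting exactness through the $p$-adic weight filtration with the uniformity that Dwork's estimates require --- together with the non-convenient case. The degree count is then bookkeeping with Hilbert series, and the weight statement is a citation of Weil II once the cohomological comparison is in place.
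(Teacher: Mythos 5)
This statement is quoted by the paper as a known theorem of Adolphson and Sperber \cite{AS1989}; the paper gives no proof of it, so there is no internal argument to compare yours against. Your sketch is, in outline, the original Adolphson--Sperber proof: Dwork's splitting function and trace formula, the twisted Koszul/de Rham complex with operators $D_i=x_i\partial_{x_i}+\pi\,x_i\partial_{x_i}(\hat{f})$, acyclicity below top degree extracted from non-degeneracy via a regular-sequence (Kouchnirenko-type) argument on the graded semigroup ring of $C(\Delta(f))$, the degree count $n!\,\Vol(\Delta(f))$ by a Hilbert-series computation, and the archimedean weight statement from Deligne. Three caveats. First, the statement as quoted implicitly needs $\dim\Delta(f)=n$ (otherwise $n!\,\Vol(\Delta(f))=0$ and the asserted factorization degenerates), and your regular-sequence argument uses exactly this, so the hypothesis should be made explicit. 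Second, the two genuinely hard steps --- lifting exactness of the Koszul complex of principal symbols through the $p$-adic weight filtration with uniform estimates, and the reduction of non-convenient polytopes --- are named but not carried out; these constitute the technical core of \cite{AS1989} and cannot be dismissed as routine. Third, the precise claim $|\alpha_i|=q^{\omega_i/2}$ with $\omega_i\in\Z\cap[0,n]$ does not follow from the $p$-adic construction alone: you need either the comparison with $\ell$-adic cohomology that you invoke (itself a nontrivial theorem) or, as is standard, the combination of the Adolphson--Sperber bound $|\alpha_i|\le q^{n/2}$ with Deligne's integrality and purity results. As a blueprint your proposal is faithful to the source the paper cites; as a proof it leaves the same substantial gaps that the citation is there to cover.
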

Deligne's integrality theorem implies that the $p$-adic absolute values of reciprocal roots are given by $|\alpha_i|_p=q^{-r_i}$ where $r_i\in \Q\cap[0,n]$. For simplicity, we normalize $p$-adic absolute value to be $|q|_p=q^{-1}$. To determine the $q$-adic slopes of its reciprocal roots, we compute the $q$-adic Newton polygon of $\LF^*(f,T)^{(-1)^{n-1}}$.
\begin{Def}[Newton polygon]\label{m1}
	Let $\LF(T)=\sum^n_{i=0}a_i T^i$ $\in 1+T\overline{\Q}_p[T]$, where $\overline{\Q}_p$ is the algebraic closure of $\Q_p$. The $q$-adic Newton polygon of $\LF(T)$ is defined to be the lower convex closure of the set of points $\{\left(k,\ord_q(a_k)\right) | k=0, 1,\ldots, n \}$ in $\R^2$.
\end{Def}
Here $\ord_q$ denotes the standard $q$-adic ordinal on $\overline{\Q}_p$ where the valuation is normalized by assuming $\ord_q(q)=1$. The following lemma\cite{koblitz2012p} relates the $q$-adic valuation of reciprocal roots to the shape of the corresponding $q$-adic Newton polygon.
\begin{Lemma}\label{le4}
	In the above notation, let $\LF(T)=(1-{\alpha_1}T)\ldots(1-{\alpha_n}T)$ be the factorization of $\LF(T)$ in terms of reciprocal roots $\alpha_i \in \overline{\Q}_p$. Let $\lambda_i=\ord_q\alpha_i$. If $\lambda$ is the slope of a $q$-adic Newton polygon with horizontal length $l$, then precisely $l$ of the $\lambda_i$ are equal to $\lambda$.
\end{Lemma}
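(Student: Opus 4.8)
The plan is to give the classical argument (cf. \cite{koblitz2012p}) identifying the $q$-adic Newton polygon of $\LF(T)$ with the polygon built directly from the multiset $\{\lambda_1,\dots,\lambda_n\}$. After relabelling the reciprocal roots we may assume $\lambda_1\le\lambda_2\le\dots\le\lambda_n$, and (as is the case in all our applications, where the $\alpha_i$ are nonzero algebraic integers) that every $\alpha_i\neq 0$, so each $\lambda_i$ is finite. Expanding $\LF(T)=\prod_{i=1}^n(1-\alpha_iT)$ gives $a_k=(-1)^k e_k(\alpha_1,\dots,\alpha_n)=(-1)^k\sum_{|S|=k}\prod_{i\in S}\alpha_i$, where $e_k$ is the $k$-th elementary symmetric polynomial and the sum runs over $k$-element subsets $S\subseteq\{1,\dots,n\}$.

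First I would establish the lower bound. Since $\ord_q\bigl(\prod_{i\in S}\alpha_i\bigr)=\sum_{i\in S}\lambda_i\ge \lambda_1+\dots+\lambda_k$ for every $k$-subset $S$ (the $\lambda_i$ being sorted), the ultrametric inequality gives $\ord_q(a_k)\ge \lambda_1+\dots+\lambda_k$ for $k=0,1,\dots,n$. Let $P$ be the polygon with vertices $(k,\ \lambda_1+\dots+\lambda_k)$, $k=0,\dots,n$; it is convex because the $\lambda_i$ are nondecreasing, its successive slopes are exactly $\lambda_1,\dots,\lambda_n$, and the inequality just proved says every point $(k,\ord_q a_k)$ lies on or above $P$. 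Hence the $q$-adic Newton polygon of $\LF(T)$, being the lower convex closure of those points, also lies on or above $P$.

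Next I would check equality at the vertices of $P$, which occur at $k=0$, at $k=n$, and at any index $k$ with $\lambda_k<\lambda_{k+1}$. For such a $k$, I claim $\{1,\dots,k\}$ is the \emph{unique} $k$-subset minimizing $\sum_{i\in S}\lambda_i$: if $S\neq\{1,\dots,k\}$ with $|S|=k$, pick $j\le k$ with $j\notin S$ and $j'>k$ with $j'\in S$; then $\lambda_{j'}\ge\lambda_{k+1}>\lambda_k\ge\lambda_j$, so replacing $j'$ by $j$ strictly lowers the sum, contradicting minimality. Therefore the single term $(-1)^k\alpha_1\cdots\alpha_k$ of $a_k$ has strictly smaller $q$-adic ordinal than every other term, so $\ord_q(a_k)=\lambda_1+\dots+\lambda_k$ exactly (the cases $k=0$, where $a_0=1$, and $k=n$, where $a_n=(-1)^n\alpha_1\cdots\alpha_n$, being immediate). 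Thus the Newton polygon passes through every vertex of the convex polygon $P$ while lying on or above it, which forces the Newton polygon to coincide with $P$. Reading off slopes, a slope $\lambda$ of horizontal length $l$ appears in the Newton polygon precisely when exactly $l$ of the $\lambda_i$ equal $\lambda$, which is the assertion.

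The one delicate point — and where a careless argument fails — is the uniqueness of the minimizing subset at the vertices: when the $\lambda_i$ repeat, many $k$-subsets tie for the minimal sum at a non-vertex index $k$, and cancellation among the leading terms of $a_k$ is genuinely possible there, so one cannot conclude $\ord_q(a_k)=\lambda_1+\dots+\lambda_k$ in general. The hypothesis $\lambda_k<\lambda_{k+1}$ at a vertex is exactly what isolates a single dominant term and makes the identification $P=\text{Newton polygon}$ go through.
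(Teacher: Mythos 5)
Your proof is correct: the lower bound via the ultrametric inequality on the elementary symmetric functions, the isolation of the unique dominant term $(-1)^k\alpha_1\cdots\alpha_k$ at indices $k$ with $\lambda_k<\lambda_{k+1}$, and the convexity argument forcing the Newton polygon to coincide with the polygon $P$ of sorted slopes together give a complete and careful argument, including the genuinely delicate point about possible cancellation at non-vertex indices. The paper itself offers no proof of this lemma --- it is quoted from the reference \cite{koblitz2012p} --- and your argument is essentially the standard one found there, so nothing further is needed.
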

Assume $f$ is a non-degenerate Laurent polynomial in $n$ variables and consequently $\LF^*(f,T)^{(-1)^{n-1}}$ is a polynomial. The $q$-adic Newton polygon of $\LF^*(f,T)^{(-1)^{n-1}}$ is denoted by NP($f$), which is hard to compute directly. Adolphson and Sperber proved that NP($f$) has a topological lower bound called the Hodge polygon, that is easier to calculate. So generally, we first compute its lower bound Hodge polygon and then determine when the Newton polygon coincides with its lower bound. 

Let $\Delta$ be an $n$-dimensional integral polytope containing the origin in $\R^n$. Define $C(\Delta)$ to be the cone generated by $\Delta$ in $\R^{n}$. For any point $u\in \R^{n}$, the weight function $w(u)$ is the smallest non-negative real number $c$ such that $u\in c\Delta$. Let $w(u)=\infty$ if such $c$ doesn't exist. Assume $\delta$ is a co-dimension 1 face of $\Delta$ not containing the origin. Let $D(\delta)$ be the least common multiple of the denominators of the coefficients in the implicit equation of $\delta$, normalized to have constant term 1.
We define the denominator of $\Delta$ to be the least common multiple of all such $D(\delta)$ given by:
\begin{equation*}
D=D(\Delta)= \mathrm{lcm}_{\delta}D(\delta)
\end{equation*}
where $\delta$ runs over all the co-dimension 1 faces of $\Delta$ that don't contain the origin. It's easy to check 
\begin{equation*}
w(\Z^n)\subseteq \frac{1}{D(\Delta)}\Z_{\geq0}\cup \{ + {\infty} \}.
\end{equation*} 
For a non-negative integer $k$, let 
 \begin{equation}\label{eqW}
 W_{\Delta}(k)=\# \left\{ u \in \Z^n | w(u)= \frac{k}{D} \right\}
 \end{equation} 
 be the number of lattice points in $\Z^n$ with weight $k/D$. 

\begin{Def}[Hodge number]\label{def5}
	Let $\Delta$ be an $n$-dimensional integral polytope containing the origin in $\R^n$. For a non-negative integer $k$, the $k$-th Hodge number of $\Delta$ is defined to be
\begin{align}\label{eq3}
	H_{\Delta}(k)=\sum^{n}_{i=0}(-1)^i \binom{n}{i}W_{\Delta}(k-iD).
\end{align}
\end{Def}
It's easy to check that 
\begin{equation*}
H_{\Delta}(k)=0, \quad \text{if}\quad k>nD.
\end{equation*}
Adolphson and Sperber\cite{AS1989} proved that $H_{\Delta}(k)$ coincides with the usual Hodge number in the toric hypersurface case that $D=1$. Based on the Hodge numbers, we define the Hodge polygon of a given polyhedron $\Delta\in \R^n$ as follows.
\begin{Def}[Hodge polygon]\label{def6}
	The Hodge polygon HP($\Delta$) of $\Delta$ is the lower convex polygon in $\R^2$ with vertices (0,0) and 
	\begin{equation*}
	Q_k=\left( \sum^k_{m=0}H_{\Delta}(m), \frac{1}{D}\sum^k_{m=0}m H_{\Delta}(m) \right), \quad  k=0,1,\ldots, nD,
	\end{equation*}
	where $H_{\Delta}(k)$ is the $k$-th Hodge number of $\Delta$, $k=0,1,\ldots, nD.$
	
	That is, HP($\Delta$) is a polygon starting from origin (0,0) with a slope $k/D$ side of horizontal length $H_{\Delta}(k)$ for $k=0,1,\ldots, nD$. The vertex $Q_k$ is called a break point if $H_{\Delta}(k+1)\neq 0$ where $k=1,2,\ldots,nD-1$. 
\end{Def}
Here the horizontal length $H_{\Delta}(k)$ represents the number of  lattice points of weight $k/D$ in a certain fundamental domain corresponding to a basis of the $p$-adic cohomology space used to compute the L-function. Adolphson and Sperber constructed the Hodge polygon and proved that it's a lower bound of the corresponding Newton polygon. 
\begin{Th}[Adolphson and Sperber\cite{AS1989}]
	For every prime p and non-degenerate Laurent polynomial $f$ with $\Delta(f)=\Delta \subset \R^n$, we have 
	\begin{equation*}
	\text{NP}(f) \geq \text{HP}(\Delta),
	\end{equation*}
	where NP($f$) is the $q$-adic Newton polygon of  $\LF^*(f,T)^{(-1)^{n-1}}.$
	Furthermore, the endpoints of NP($f$) and NP($\Delta$) coincide.
\end{Th}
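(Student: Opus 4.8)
The plan is to run Dwork's $p$-adic cohomological machinery and then read the lower bound off the divisibility properties of the Frobenius matrix; no hypothesis on $p$ beyond what is built into the splitting function is needed. Fix a root $\pi$ of $X^{p-1}=-p$, so that $\ord_p\pi=\frac1{p-1}$, and let $\theta(t)=\exp\bigl(\pi(t-t^{p})\bigr)=\sum_{m\ge0}\lambda_m t^{m}$ be the Dwork splitting function; a classical estimate (for instance $\ord_p\lambda_m\ge\frac{(p-1)m}{p^{2}}$) shows that $\theta$ converges on a closed disc of radius strictly larger than $1$. Writing $f=\sum_{j=1}^{J}a_j x^{V_j}$ with Teichm\"uller lifts $\widehat a_j$, set $F(x)=\prod_{j}\theta\bigl(\widehat a_j x^{V_j}\bigr)$; since $w\bigl(\sum_j m_j V_j\bigr)\le\sum_j m_j$, the coefficient of $x^{u}$ in $F$ has $\ord_p$ at least a fixed positive multiple of the weight $w(u)$. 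For a suitable positive parameter $b$, introduce the $p$-adic Banach space
\begin{equation*}
L(b)=\Bigl\{\,\textstyle\sum_{u\in C(\Delta)\cap\Z^{n}}c_u x^{u}\ :\ \ord_p c_u\ge b\,w(u),\ c_u\to0\,\Bigr\},
\end{equation*}
on which the dilation operator $\psi_q\colon\sum c_u x^{u}\mapsto\sum c_{qu}x^{u}$ is completely continuous; the $q$-power Frobenius is $\alpha=\psi_q\circ(\text{multiplication by }F)$, acting on $L(b)$.

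Second, I would invoke Dwork's trace formula together with the Koszul complex $K^{\bullet}$ on $L(b)$ built from the $n$ twisted logarithmic derivations attached to $F$ (essentially $x_i\partial_{x_i}+x_i\,\partial\log F/\partial x_i$, $1\le i\le n$). The Euler--Poincar\'e argument writes each $S^{*}_k(f)$ as an alternating sum of traces of powers of $\alpha$ on the terms of $K^{\bullet}$, and the non-degeneracy of $f$ --- through a Koszul-exactness statement of Adolphson--Sperber/Kouchnirenko type adapted to the cone $C(\Delta)$ --- forces $H^{i}(K^{\bullet})=0$ for $i<n$. Hence $\LF^{*}(f,T)^{(-1)^{n-1}}=\det\bigl(I-T\alpha\mid H^{n}(K^{\bullet})\bigr)$ is a polynomial of degree $N:=n!\Vol(\Delta)$, in agreement with Theorem \ref{thm2}. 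Moreover, passing to the associated graded of the weight filtration turns $H^{n}(K^{\bullet})$ into the top Koszul homology of $\bigl(\mathrm{gr}\,L(b);\ \text{multiplication by the principal parts of }x_i\,\partial f/\partial x_i\bigr)$, whose Poincar\'e series is $\bigl(1-t^{D}\bigr)^{n}\sum_k W_{\Delta}(k)t^{k}=\sum_k H_{\Delta}(k)t^{k}$ --- exactly the inclusion--exclusion of \eqref{eq3}. Thus one may choose a basis $x^{u_1},\dots,x^{u_N}$ of $H^{n}(K^{\bullet})$ consisting of classes of monomials with $\#\{\,i:w(u_i)=k/D\,\}=H_{\Delta}(k)$ for all $k$.

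The technical heart is then the divisibility estimate: with respect to this basis, the matrix $M=(m_{ij})$ of $\alpha$ on $H^{n}(K^{\bullet})$ satisfies $\ord_q m_{ij}\ge w(u_j)$ for all $i,j$, i.e.\ its $j$-th column is divisible by $q^{w(u_j)}$. This comes from the coefficient bound on $F$ together with the positive homogeneity of $w$ (so that $\psi_q$ rescales weights by $q$); the delicate point, which I expect to be the main obstacle, is to control the error introduced when passing from $L(b)$ down to $H^{n}(K^{\bullet})$. Granting the estimate, the coefficient of $T^{m}$ in $\det(I-TM)$ is, up to sign, a sum of $m\times m$ principal minors of $M$, and every summand of such a minor is a product $\prod_{j\in S}m_{\sigma(j),j}$ over an $m$-element column set $S$, hence has $\ord_q\ge\sum_{j\in S}w(u_j)$, which is at least the sum of the $m$ smallest elements of the multiset that contains $l/D$ with multiplicity $H_{\Delta}(l)$ for each $l$. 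For $m=\sum_{l\le k}H_{\Delta}(l)$ this lower bound equals $\frac1D\sum_{l\le k}l\,H_{\Delta}(l)$, precisely the ordinate of the vertex $Q_k$ of $\mathrm{HP}(\Delta)$ in Definition \ref{def6}; combining this with Definition \ref{m1} and Lemma \ref{le4} gives $\mathrm{NP}(f)\ge\mathrm{HP}(\Delta)$.

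Finally, for the coincidence of endpoints: the left endpoint of both polygons is $(0,0)$, and both have horizontal length $N=n!\Vol(\Delta)=\sum_k H_{\Delta}(k)$ (an Ehrhart-type count). It remains to match the terminal ordinates, i.e.\ to show $\ord_q\det\bigl(\alpha\mid H^{n}(K^{\bullet})\bigr)=\sum_{j=1}^{N}w(u_j)=\frac1D\sum_k k\,H_{\Delta}(k)$. The inequality ``$\ge$'' is immediate from the column-divisibility estimate; for ``$\le$'' one checks that $q^{-w(u_j)}$ times the $j$-th column of $M$, reduced modulo the maximal ideal, assembles into an invertible matrix --- equivalently, that $\alpha$ is nondegenerate on the associated graded of the weight filtration --- which once more reduces to the non-degeneracy of $f$ via the Koszul/Kouchnirenko input of the second step. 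This makes the terminal vertices of $\mathrm{NP}(f)$ and $\mathrm{HP}(\Delta)$ coincide and completes the argument.
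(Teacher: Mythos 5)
This theorem is quoted by the paper from Adolphson--Sperber \cite{AS1989}; the paper itself gives no proof, so your proposal has to be judged as a reconstruction of the original argument. Your outline does follow the right general strategy (Dwork's splitting function, the Banach space $L(b)$, the Koszul complex, vanishing of lower cohomology under non-degeneracy, a monomial basis of $H^{n}$ with $H_{\Delta}(k)$ elements of weight $k/D$, and a column-divisibility estimate), but as written it has two genuine gaps. The first you flag yourself and then simply grant: the descent of the divisibility estimate from the chain level to $H^{n}(K^{\bullet})$, i.e.\ producing a basis of cohomology on which the Frobenius matrix satisfies $\ord_q m_{ij}\geq w(u_j)$. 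That is not a routine bookkeeping step --- it is the main technical content of Adolphson--Sperber's proof (one must show Frobenius-stability estimates compatible with the weight filtration, and handle the semilinear-to-linear passage from $A_1$ to $A_a$). Note that there is a way to avoid this descent altogether, which is the route recorded in Subsection \ref{subsubsec232}: one proves the easy chain-level bound that the Newton polygon of $\det(I-TA_a(f))$ lies above $P(\Delta)$, and then transfers it to $\LF^{*}(f,T)^{(-1)^{n-1}}$ through the identities \eqref{eq24}--\eqref{eq25} together with polynomiality and the inclusion--exclusion defining $H_{\Delta}$ in \eqref{eq3}; your proof would be on firmer ground if it took that path instead of asserting the cohomological estimate.

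The second gap is more serious: your argument for the coincidence of endpoints does not work as stated. Invertibility modulo the maximal ideal of the matrix obtained by rescaling the $j$-th column of $M$ by $q^{-w(u_j)}$ is \emph{equivalent} to the assertion $\ord_q\det M=\frac1D\sum_k k\,H_{\Delta}(k)$, so claiming it ``reduces once more to non-degeneracy via the Koszul input'' is circular unless you actually prove that reduction --- and the graded Koszul acyclicity does not give it, because Frobenius does not act in any simple way on the associated graded of the weight filtration (if it did, one would be dangerously close to proving ordinarity for every $p$, which is false: already for $f=x^d$, $p\not\equiv1\pmod d$, the individual column valuations exceed the Hodge slopes even though the determinant has the right valuation). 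The actual proof of the endpoint statement goes through duality: Poincar\'e duality for the Dwork--Adolphson--Sperber cohomology (valid for non-degenerate $f$ with $\dim\Delta=n$) yields the functional equation pairing each reciprocal root $\alpha$ with $q^{n}/\alpha$, so the sum of all slopes is $n\cdot n!\Vol(\Delta)/2$, and this matches the terminal ordinate of $\mathrm{HP}(\Delta)$ because of the combinatorial symmetry $H_{\Delta}(k)=H_{\Delta}(nD-k)$. Without that duality step (or an equivalent), your proof establishes only the inequality $\mathrm{NP}(f)\geq\mathrm{HP}(\Delta)$, not the coincidence of endpoints.
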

\begin{Def}
	A Laurent polynomial $f$ is called ordinary if NP($f$) = HP($\Delta$). 
\end{Def}

Apparently, the ordinary property of a Laurent polynomial depends on its Newton polyhedron $\Delta$. In order to study the ordinary property, we will apply Wan's decomposition theorem \cite{Dwan1993}, decomposing the polyhedron $\Delta$ into small pieces that are much easier to deal with. 

\subsection{Wan's decomposition theorems}\label{subsec23}
 \subsubsection{Facial decomposition theorem}\label{subsubsec231}
In this paper, we use facial decomposition theorem to cut the polyhedron into small simplices. For each simplex, we can apply some criteria to determine the non-degenerate and ordinary property. 
\begin{Th}[Facial decomposition theorem\cite{Dwan1993}]\label{thm9}
	Let $f$ be a non-degenerate Laurent polynomial over $\F_q$. Assume $\Delta=\Delta(f)$ is $n$-dimensional and $\delta_1,\ldots, \delta_h$ are all the co-dimension 1 faces of $\Delta$ which don't contain the origin. Let $f^{\delta_i}$ denote the restriction of $f$ to $\delta_i$. Then $f$ is ordinary if and only if $f^{\delta_i}$ is ordinary for $1\leq i\leq h$. 
\end{Th}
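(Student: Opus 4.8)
The plan is to reduce the biimplication to a single mod-$p$ condition and then to observe that this condition decomposes along the facets $\delta_1,\dots,\delta_h$. Recall that $f$ (resp.\ $f^{\delta_i}$) is ordinary exactly when $\mathrm{NP}(f)=\mathrm{HP}(\Delta)$ (resp.\ $\mathrm{NP}(f^{\delta_i})=\mathrm{HP}(\Delta(f^{\delta_i}))$ with $\Delta(f^{\delta_i})=\mathrm{conv}(\{0\}\cup\delta_i)$ an $n$-dimensional pyramid), and that by Adolphson--Sperber one always has $\mathrm{NP}\ge\mathrm{HP}$ with coinciding endpoints. I would first note that each $f^{\delta_i}$ is automatically non-degenerate: the faces of $\Delta(f^{\delta_i})$ missing the origin are precisely the faces of $\delta_i$, which are faces of $\Delta$, and on them $f^{\delta_i}$ coincides with $f$; so Adolphson--Sperber applies to every $f^{\delta_i}$ and ``$f^{\delta_i}$ ordinary'' is meaningful. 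The point of the proof is that ordinariness is governed by the reduction modulo $p$ of one explicit linear-algebra datum attached to $\Delta$, which then splits into pieces indexed by the $\delta_i$.

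Concretely, I would pass to Dwork's $p$-adic theory: $\mathrm{NP}(f)$ is the $q$-adic Newton polygon of the characteristic series of a nuclear Frobenius operator $\alpha$ on a $p$-adic Banach space with topological basis $\{x^u : u\in\Z^n\cap C(\Delta)\}$, filtered by the weight $w_\Delta$, and the associated graded carries the ``Hodge-graded'' Frobenius $\bar\alpha$, its weight-$k$ piece having dimension $H_\Delta(k)$. The standard criterion here (of Mazur--Katz type) is that $f$ is ordinary if and only if a certain explicit polynomial in the coefficients of $f$ -- a Hasse invariant built from the relevant minor of $\bar\alpha$ -- is a unit, i.e.\ is nonzero modulo $p$. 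The key combinatorial observation is then: subdivide $C(\Delta)$ into half-open subcones $C_1,\dots,C_h$, one per facet, chosen via a generic linear functional so that $\Z^n\cap C(\Delta)=\bigsqcup_{i=1}^h(\Z^n\cap C_i)$ with $\mathrm{int}\,C(\delta_i)\subseteq C_i\subseteq C(\mathrm{conv}(\{0\}\cup\delta_i))$. On $C(\mathrm{conv}(\{0\}\cup\delta_i))$ the weight $w_\Delta$ is cut out purely by the supporting hyperplane of $\delta_i$, so the weight-graded pieces -- and hence $\bar\alpha$ -- split as a direct sum indexed by $i$, with the $i$-th summand equal, up to the contribution of the faces of $\Delta(f^{\delta_i})$ through the origin, to the Hodge-graded Frobenius of $f^{\delta_i}$. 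Feeding this back into the Hasse invariant gives $\mathrm{Hasse}(f)=(\text{unit})\cdot\prod_{i=1}^h\mathrm{Hasse}(f^{\delta_i})$, so $f$ is ordinary iff every factor is a unit iff every $f^{\delta_i}$ is ordinary. The faces set aside are pyramids over faces of $\delta_i$ of strictly smaller dimension, so an induction on $\dim\Delta$ (the base case $\dim\Delta=1$ being explicit, since there the facial restrictions are monomials) disposes of them, the shared lower faces being handled consistently by inclusion--exclusion as they appear on both sides of the identity.

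The step I expect to be the real obstacle is the passage ``ordinary $\iff$ Hasse invariant is a unit'' together with the block-decomposition of $\bar\alpha$: one must build the weight filtration on the Dwork complex, prove that $\mathrm{NP}(f)=\mathrm{HP}(\Delta)$ is equivalent to the non-vanishing mod $p$ of the relevant minor (this is exactly where non-degeneracy of $f$ is consumed, through Adolphson--Sperber's cohomological estimates), and verify that that minor genuinely factors according to the half-open cone decomposition. A secondary nuisance is bookkeeping with denominators, since $D(\delta_i)\mid D(\Delta)=\mathrm{lcm}_i D(\delta_i)$ and the weight normalisations for $\Delta$ and for the individual $\Delta(f^{\delta_i})$ differ and must be reconciled before the Hasse invariants are compared. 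The remaining ingredients -- inheritance of non-degeneracy, the lattice-point partition, the inclusion--exclusion over origin-facets, and the induction on dimension -- are routine once the Dwork-theoretic input is in hand.
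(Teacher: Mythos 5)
First, a point of comparison: the paper does not prove Theorem \ref{thm9} at all --- it is quoted from Wan \cite{Dwan1993} --- so your text has to be judged as an attempted reproof of Wan's facial decomposition theorem. Your general framework is the right one: pass to the chain level (Dwork's Frobenius matrix indexed by $\Z^n\cap C(\Delta)$, ordinariness rephrased via Proposition \ref{prop c} as exact attainment of the weight lower bound by the relevant twisted minors), note that each $f^{\delta_i}$ inherits non-degeneracy because the faces of $\mathrm{conv}(\{0\}\cup\delta_i)$ missing the origin are faces of $\Delta$ on which $f^{\delta_i}=f$, and use that on the cone $C(\delta_i)$ the weight $w$ is linear (given by the supporting functional of $\delta_i$), so minimal-weight representations of a point of $C(\delta_i)$ only involve vertices lying on $\delta_i$. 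All of that is sound.

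The decisive step, however, is missing, and as stated it is false. You claim that after partitioning $C(\Delta)$ into half-open cones $C_i$ with $\mathrm{int}\,C(\delta_i)\subseteq C_i$, the weight-graded Frobenius ``splits as a direct sum indexed by $i$''. An entry $a_{r,s}$ attains the minimal valuation $w(s)$ (hence survives in the reduction) whenever $r$, $s$ and $ps-r$ lie in a \emph{common closed} facial cone, and this happens with $r$ a lattice point of $C(\delta_i)\cap C(\delta_j)$ and $s$ interior to either cone: e.g.\ for $f=x+y+xy$ (unit square, two facets) the points of the diagonal ray give minimal-valuation entries into the interiors of both facial cones, so no assignment of the shared boundary to one half-open piece yields a direct sum, and the claimed factorization $\mathrm{Hasse}(f)=(\text{unit})\prod_i\mathrm{Hasse}(f^{\delta_i})$ does not follow from your construction. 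What one can actually extract is only a block-\emph{triangular} structure: a minimal entry $a_{r,s}$ forces $r$ to lie in the closure of the open cone (in the fan formed by the faces of the cones $C(\delta_i)$) containing $s$, so the reduction is triangular for that partial order and its determinant factors over the \emph{open} cones of this fan; one must then regroup these open-cone factors into the closed-cone data of the $f^{\delta_i}$ (a boundary-decomposition--style argument; note the block indexed by $\mathrm{int}\,C(\delta_i)$ alone is \emph{not} the Hasse datum of $f^{\delta_i}$). None of this is in your sketch --- indeed you yourself defer exactly this verification, together with the chain-level equivalence ``ordinary $\iff$ the twisted minors are units'' (which consumes non-degeneracy via Adolphson--Sperber and Proposition \ref{prop c}), as ``the real obstacle''. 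Two smaller inaccuracies: the weight-$k$ graded piece at chain level has dimension $W_\Delta(k)$, not $H_\Delta(k)$ (the Hodge numbers only appear after the alternating twist (\ref{eq24})--(\ref{eq25})), and the discrepancy between the facial pieces is governed by the shared boundary cones $C(\delta_i\cap\delta_j)$, not by ``faces through the origin''; your proposed inclusion--exclusion/induction does not address the off-block entries described above. So the proposal is a program whose central factorization is unproven, and whose stated splitting mechanism cannot work without being replaced by the triangularity-and-regrouping argument that constitutes the actual content of Wan's theorem.
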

In order to describe the boundary decomposition, we first express the L-function in terms of the Fredholm determinant of an infinite Frobenius matrix.

\subsubsection{Dwork's trace formula}\label{subsubsec232}
Let $p$ be a prime and $q=p^a$ for some positive integer $a$. Let $\Q_p$ denote the field of $p$-adic numbers and $\Omega$ be the completion of $\overline{\Q}_p$. Pick a fixed primitive $p$-th root of unity in $\Omega$ denoted by $\zeta_p$. In $\Q_p(\zeta_p)$, choose a fixed element $\pi$ satisfying
\begin{equation*}
\sum^{\infty}_{m=0}\frac{\pi^{p^m}}{p^m}=0 \quad \text{and} \quad \ord_p \pi =\frac{1}{p-1}.
\end{equation*} 
By Krasner's lemma, it's easy to check $Q_p(\pi)=Q_p(\zeta_p)$. Let $K$ be the unramified extension of $\Q_p$ of degree $a$. Let $\Omega_a$ be the compositum of $Q_p(\zeta_p)$ and $K$. 
\begin{center}
	\begin{tikzpicture}[node distance = 1.5cm, auto]
      \node (Q) {$\Q_p$};
      \node (K) [above of=Q, left of=Q] {$K$};
      \node (Z) [above of=Q, right of=Q] {$Q_p(\pi)$};
      \node (O) [above of=Q, node distance = 3cm] {$\Omega_a$};
      \draw[-] (Q) to node {$a$} (K);
      \draw[-] (Q) to node [swap] {$p-1$} (Z);
      \draw[-] (K) to node {$$} (O);
      \draw[-] (Z) to node [swap] {$$} (O);
      \end{tikzpicture}
  \end{center}
Define the Frobenius automorphism $\tau \in \text{Gal}(\Omega_a/\Q_p(\pi))$ by lifting the Frobenius automorphism $x\mapsto x^p$ of Gal($\F_q/\F_p$) to a generator $\tau$ of Gal($K/\Q_p$) and extending it to $\Omega_a$ with $\tau(\pi)=\pi$. For the primitive $(q-1)$-th root of unity $\zeta_{q-1}$ in $\Omega_a$, we have $\tau(\zeta_{q-1})=\zeta_{q-1}^p$.

Let $E_p(t)$ be the Artin-Hasse exponential series,
\begin{equation*}
E_p(t)=\exp\left(\sum^{\infty}_{m=0}\frac{t^{p^m}}{p^m}\right)=\sum_{m=0}^{\infty}\lambda_m t^m \in  \Z_p[[x]].
\end{equation*}
In Dwork's terminology, a splitting function $\theta(t)$ is defined to be
\begin{equation*}
\theta(t)=E_p(\pi t)=\sum_{m=0}^{\infty}\lambda_m\pi^mt^m.
\end{equation*}
When $t=1$, $\theta(1)$ can be identified with $\zeta_p$ in $\Omega$.

Consider a Laurent polynomial $f \in $ $\F_q[x_1^{\pm1}, \ldots, x_n^{\pm1}]$ given by
\begin{equation*}
f=\sum_{j=1}^J \bar{a}_j x^{V_j},
\end{equation*}
where $V_j \in {\Z}^n$ and $\bar{a}_j \in \F_q^{*}$. Let $a_j$ be the Teichm\"{u}ller lifting of $\bar{a}_j $ in $\Omega$ satisfying $a_j^q=a_j$. Let
\begin{equation*}
F(f,x)=\prod_{j=1}^J\theta(a_j x^{V_j})=\sum_{r \in {\Z}^n} F_r(f)x^r  \in \Omega_a[[x]].
\end{equation*}
The coefficients of $F(f,x)$ are given by 
\begin{equation*}
F_r(f)=\sum_u (\prod^{J}_{j=1} \lambda_{u_j} a_j^{u_j}) \pi^{u_1+\dots+u_{J}}, \quad r \in {\Z}^n,
\end{equation*}
 where the sum is over all the solutions of the following linear system
\begin{equation*}
\sum^{J}_{j=1}u_jV_j=r \quad \text{with}\quad u_j \in \Z_{\geq 0},
\end{equation*} 
and $\lambda_m$ is $m$-th coefficient of the Artin-Hasse exponential series $E_p(t)$.

Assume $\Delta=\Delta(f)$. Let $L(\Delta)=\Z^{n}\cap C(\Delta)$ be the set of lattice points in the closed cone generated by origin and $\Delta$. For a given point $r\in \R^n$, define the weight function to be
\begin{equation*}
w(r): =\inf_{\vec{u}}\left\{ \sum_{j=1}^J u_j |\sum_{j=1}^J u_jV_j=r,\quad u_j\in \R_{\geq 0}\right\}.
\end{equation*}
In Dwork's terminology, the infinite semilinear Frobenius matrix $A_1(f)$ is a matrix whose rows and columns are indexed by the lattice points in $L(\Delta)$ with respect to the weights
\begin{equation*}
	A_1(f)=(a_{r,s}(f))=(F_{ps-r}(f)\pi^{w(r)-w(s)}),
\end{equation*}
where $r, s\in L(\Delta)$. 
Based on the fact that $\ord_p F_r(f)\geq \frac{w(r)}{p-1}$, we have the following estimate
\begin{equation*}
\ord_p( a_{r,s}(f)) \geq \frac{w(ps-r)+w(r)-w(s)}{p-1}\geq w(s).
\end{equation*} 
Let $\xi$ be an element in $\Omega$ satisfying $\xi^D=\pi^{p-1}$. Then $A_1(f)$ can be written in a block form,
\begin{equation*}
A_1(f)
=\begin{pmatrix}
A_{00} &  \xi A_{01}  & \cdots\quad & {\xi}^iA_{0i}&\cdots\\
A_{10} &  \xi A_{11}  & \cdots\quad & {\xi}^iA_{1i}&\cdots\\
 \vdots & \vdots & \ddots  & \vdots  \\
 A_{i0} &  \xi A_{i1}  & \cdots\quad & {\xi}^iA_{ii}&\cdots\\
 \vdots & \vdots & \ddots  & \vdots
\end{pmatrix},
\end{equation*}
where the block $A_{ii}$ is a $p$-adic integral $W_{\Delta}(i) \times W_{\Delta}(i)$ matrix and $W_{\Delta}(i)=\#\{u \in \Z^n | w(u)= \frac{i}{D}\}$.

The infinite linear Frobenius matrix $A_a(f)$ is defined to be
\begin{equation*}
A_a(f)=A_1(f)A_1^{\tau}(f)\cdots A_1^{\tau^{a-1}}(f).
\end{equation*}
The $q$-adic Newton polygon of $\det(I-TA_a(f))$ has a natural lower bound which can be identified with the chain level version of the Hodge polygon.
\begin{Def}Let $P(\Delta)$ be the polygon in $\R^2$ with vertices $(0,0)$ and 
\begin{equation*}
P_k=\left( \sum^k_{m=0}W_{\Delta}(m), \frac{1}{D}\sum^k_{m=0}m W_{\Delta}(m) \right), \quad  k=0,1,2, \ldots
\end{equation*}
\end{Def}
\begin{Prop}[\cite{Dwan2004}]The $q$-adic Newton polygon of $\det(I-TA_a(f))$ lies above $P(\Delta).$
\end{Prop}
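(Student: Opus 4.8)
The plan is to read off the $q$-adic orders of the coefficients of the Fredholm determinant $\det(I-TA_a(f))=\sum_{k\geq 0}c_kT^k$ directly from entrywise size bounds on the Frobenius matrix, and then to recognize the resulting lower envelope as the polygon $P(\Delta)$. Throughout, rows and columns of $A_a(f)$ are indexed by the lattice points $u\in L(\Delta)$, and the governing quantity attached to $u$ is its weight $w(u)\in\frac1D\Z_{\geq0}\cup\{+\infty\}$.

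The first step is to establish the entrywise estimate: for all $r,s\in L(\Delta)$, the $(r,s)$-entry of $A_a(f)$ has $q$-adic order at least $w(s)$. For the semilinear matrix $A_1(f)$ this is already in hand, being exactly the displayed inequality $\ord_p(a_{r,s}(f))\geq w(s)$, which in turn comes from $\ord_p F_r(f)\geq w(r)/(p-1)$, the ray-additivity and subadditivity of $w$, and the normalization $\xi^D=\pi^{p-1}$. Passing from $A_1(f)$ to $A_a(f)=A_1(f)A_1^{\tau}(f)\cdots A_1^{\tau^{a-1}}(f)$ is carried out by expanding the matrix product: an entry of $A_a(f)$ is a sum over intermediate indices $t_i\in L(\Delta)$ of products $a_{r,t_1}a^{\tau}_{t_1,t_2}\cdots a^{\tau^{a-1}}_{t_{a-1},s}$; since $\tau$ preserves valuations the $\pi$-powers telescope, and applying $\ord F_m(f)\geq w(m)/(p-1)$ together with $w(pt-r)+w(r)\geq p\,w(t)$ along the chain yields that each product has order at least $w(t_1)+\cdots+w(t_{a-1})+w(s)\geq w(s)$ in the $q$-adic normalization. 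A consequence is that the $s$-column of $A_a(f)$ tends to $0$ as $w(s)\to\infty$, so $A_a(f)$ is nuclear and $c_k=(-1)^k\sum_{|S|=k}\det\big(A_a(f)_S\big)$, the sum of $k\times k$ principal minors over $k$-element subsets $S\subseteq L(\Delta)$, converges.

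The second step bounds $\ord_q(c_k)$. Fixing a $k$-element $S\subseteq L(\Delta)$ and expanding $\det(A_a(f)_S)$ over permutations of $S$, the column estimate forces every term to have $q$-adic order at least $\sum_{s\in S}w(s)$, hence so does the minor, and therefore $\ord_q(c_k)\geq\min_{|S|=k}\sum_{s\in S}w(s)$, i.e.\ $\ord_q(c_k)$ is at least the sum of the $k$ smallest elements of the multiset $\{w(u):u\in L(\Delta)\}$. By definition of $W_{\Delta}$ this multiset, listed increasingly, consists of $W_{\Delta}(0)$ copies of $0$, then $W_{\Delta}(1)$ copies of $1/D$, then $W_{\Delta}(2)$ copies of $2/D$, and so on; hence at $k=\sum_{m=0}^{j}W_{\Delta}(m)$ the minimum equals $\frac1D\sum_{m=0}^{j}mW_{\Delta}(m)$, and for $k$ between consecutive such values it is piecewise linear of slope $(j+1)/D$. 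These are precisely the vertices $P_j$ and the slopes of $P(\Delta)$, and $P(\Delta)$ is already convex since the slopes $m/D$ increase with $m$. Since $c_0=1$, the $q$-adic Newton polygon of $\det(I-TA_a(f))$, being the lower convex hull of $\{(k,\ord_q c_k)\}$, starts at the origin and lies on or above $P(\Delta)$, which is the assertion.

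I expect the main obstacle to be the entrywise estimate in the first step: one must track the block decomposition of $A_1(f)$, the powers $\xi^{j}$ with $\ord(\xi)=1/D$, and the Artin--Hasse coefficient bounds through the $a$-fold composition, and verify that nothing is lost in either the $\pi$-telescoping or the weight inequalities, so that the column bound $w(s)$ survives at the $q$-adic normalization. Once that estimate is secured, the minor expansion and the reindexing of the weight multiset by the numbers $W_{\Delta}(m)$ are routine, and convexity of $P(\Delta)$ makes the passage to the lower convex hull immediate.
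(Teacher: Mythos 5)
Your overall strategy (column estimates on the Frobenius matrix, then principal-minor expansion of the Fredholm determinant, then matching the resulting bound with the vertices of $P(\Delta)$) is the right one, and it is complete when $a=1$, i.e.\ $q=p$. The gap is in your first step for $a>1$: the claimed entrywise bound ``$\ord_q\big((A_a(f))_{r,s}\big)\geq w(s)$'' is false. Your chain computation correctly gives, for each term $a_{r,t_1}a^{\tau}_{t_1,t_2}\cdots a^{\tau^{a-1}}_{t_{a-1},s}$, the estimate $\ord_p\geq w(t_1)+\cdots+w(t_{a-1})+w(s)$; but this is a bound in the $p$-adic normalization, and since $\ord_q=\tfrac1a\ord_p$ it only yields $\ord_q\geq\tfrac1a\big(w(t_1)+\cdots+w(t_{a-1})+w(s)\big)$. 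The intermediate weights can vanish (e.g.\ the chain $t_1=\cdots=t_{a-1}=0$, which is available because $0\in L(\Delta)$ has weight $0$ and $a_{0,0}$ is a unit), so the entry in column $s$ can have $\ord_q$ as small as roughly $w(s)/a$. Consequently your minor expansion only proves $\ord_q(c_k)\geq\tfrac1a\min_{|S|=k}\sum_{s\in S}w(s)$, which lies below $P(\Delta)$ by the factor $a$; the ``$q$-adic normalization'' sentence silently discards this factor, and it is exactly the point you flagged as the main obstacle.

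The missing ingredient is Dwork's lemma on $\tau$-semilinear operators (this is how the cited reference of Wan argues; the paper itself gives no proof and simply quotes it). Regard $A_1(f)$ as a $\Q_p(\pi)$-linear operator by restriction of scalars from $\Omega_a$: each lattice point $u\in L(\Delta)$ then contributes $a$ basis vectors, all of weight $w(u)$, and the column estimate $\ord_p\geq w(u)$ persists for the restricted matrix. One then has the identity $\det_{\Q_p(\pi)}\big(I-TA_1(f)\big)=\det_{\Omega_a}\big(I-T^aA_a(f)\big)$, so the coefficient $c_k$ of $T^k$ in $\det(I-TA_a(f))$ is the coefficient of $T^{ak}$ on the left. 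Applying your minor argument over $\Q_p(\pi)$, where every weight occurs with multiplicity $a$, gives $\ord_p(c_k)\geq a\cdot\big(\text{sum of the }k\text{ smallest weights}\big)$, hence $\ord_q(c_k)\geq\sum_{\text{$k$ smallest}}w(u)$, which is precisely the bound encoded by the vertices $P_k$ of $P(\Delta)$. With this semilinear-to-linear reduction inserted, the rest of your write-up (nuclearity, reindexing the weight multiset by $W_{\Delta}(m)$, convexity of $P(\Delta)$) goes through unchanged.
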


By Dwork's trace formula, we can identify the associated L-function with a product of some powers of the Fredholm determinant\cite{Dwan2004},
\begin{equation}\label{eq24}
\LF^{*}(f,T)^{(-1)^{n-1}}=\prod_{i=0}^{n}\det(I-Tq^{i}A_a(f))^{(-1)^i\binom{n}{i}}.
\end{equation}
Equivalently, we have,
\begin{equation}\label{eq25}
\det(I-TA_a(f))=\prod_{i=0}^{\infty} \left( \LF^{*}(f,q^iT)^{(-1)^{n-1}} \right)^{\binom{n+i-1}{i}}.
\end{equation}
\begin{Prop}[\cite{Dwan2004}]\label{prop c}
Notations as above. Assume $f$ is non-degenerate with $\Delta=\Delta(f)$. Then $\mathrm{NP}(f)=\mathrm{HP}(\Delta)$ if and only if the $q$-adic Newton polygon of $\det(I-TA_a(f))$ coincides with its lower bound $P(\Delta).$
\end{Prop}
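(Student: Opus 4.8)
The plan is to deduce the statement from the precise relation \eqref{eq25} between the Fredholm determinant $\det(I-TA_a(f))$ and $\LF^*(f,T)^{(-1)^{n-1}}$, together with the purely combinatorial relation \eqref{eq3} between the chain-level weights $W_\Delta(m)$ and the Hodge numbers $H_\Delta(m)$. The key observation is that the passage from $\LF^*(f,T)^{(-1)^{n-1}}$ to $\det(I-TA_a(f))$ and the passage from $\mathrm{HP}(\Delta)$ to $P(\Delta)$ are governed by one and the same operation $\Phi$ on Newton-type polygons: given a polygon whose multiset of slopes (with horizontal lengths as multiplicities) is $\{s_1,s_2,\ldots\}$, let $\Phi$ output the polygon whose slope multiset is $\{\,i+s_j : i\ge 0\,\}$, with each $i+s_j$ weighted by $\binom{n+i-1}{i}$. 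I would then establish the two identities
\[
\mathrm{NP}\bigl(\det(I-TA_a(f))\bigr)=\Phi\bigl(\mathrm{NP}(f)\bigr),\qquad P(\Delta)=\Phi\bigl(\mathrm{HP}(\Delta)\bigr),
\]
and conclude using that $\Phi$ is injective.

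For the first identity, non-degeneracy of $f$ and Theorem \ref{thm2} give $\LF^*(f,T)^{(-1)^{n-1}}=\prod_j(1-\alpha_jT)$, a polynomial whose $q$-adic Newton polygon is, by definition, $\mathrm{NP}(f)$, and thus has slope multiset $\{\ord_q\alpha_j\}$. Replacing $T$ by $q^iT$ sends $\alpha_j$ to $q^i\alpha_j$, of $q$-adic order $i+\ord_q\alpha_j$, so \eqref{eq25} reads
\[
\det(I-TA_a(f))=\prod_{i=0}^\infty\prod_j\bigl(1-(q^i\alpha_j)T\bigr)^{\binom{n+i-1}{i}},
\]
a $q$-adically convergent product whose Newton polygon is exactly $\Phi(\mathrm{NP}(f))$.

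For the second identity I would pass to generating functions. Writing $\mathcal W(x)=\sum_{k\ge0}W_\Delta(k)x^k$ and $\mathcal H(x)=\sum_{k\ge0}H_\Delta(k)x^k$, relation \eqref{eq3} is precisely $\mathcal H(x)=(1-x^D)^n\mathcal W(x)$, hence $\mathcal W(x)=(1-x^D)^{-n}\mathcal H(x)=\bigl(\sum_{i\ge0}\binom{n+i-1}{i}x^{iD}\bigr)\mathcal H(x)$, that is,
\[
W_\Delta(k)=\sum_{i\ge0}\binom{n+i-1}{i}H_\Delta(k-iD).
\]
Since $\mathrm{HP}(\Delta)$ consists of a slope-$m/D$ segment of horizontal length $H_\Delta(m)$ for each $m$, the polygon $\Phi(\mathrm{HP}(\Delta))$ has, at slope $k/D$, total horizontal length $\sum_{i\ge0}\binom{n+i-1}{i}H_\Delta(k-iD)=W_\Delta(k)$; by its definition this is exactly $P(\Delta)$.

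Finally, $\Phi$ is injective on polygons with integral horizontal segment lengths: from $\Phi(Q)$ one recovers $Q$ by repeatedly reading off the smallest slope $s_0$ occurring in $\Phi(Q)$ (it has there the same multiplicity it has in $Q$, since every other contribution $i+s$ with $i\ge1$, $s\ge s_0$, is strictly larger) and deleting its full orbit $\{s_0+i:i\ge0\}$ with the binomial weights. Hence $\mathrm{NP}(f)=\mathrm{HP}(\Delta)$ if and only if $\Phi(\mathrm{NP}(f))=\Phi(\mathrm{HP}(\Delta))$, i.e.\ if and only if $\mathrm{NP}(\det(I-TA_a(f)))=P(\Delta)$; this is consistent with, and refines, the unconditional bounds $\mathrm{NP}(f)\ge\mathrm{HP}(\Delta)$ and $\mathrm{NP}(\det(I-TA_a(f)))\ge P(\Delta)$ already recorded above. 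The one point requiring real care is the polygon bookkeeping — verifying that $\Phi$ interacts correctly with taking Newton polygons of $q$-adically convergent infinite products, and that the peeling recovery genuinely reflects equality — but this is formal once the two identities above are in place.
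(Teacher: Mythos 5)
Your argument is correct, and since the paper offers no proof of Proposition \ref{prop c} (it is quoted from Wan \cite{Dwan2004}), your reconstruction is essentially the standard argument behind the cited result: transfer both polygons through the slope operation $\Phi$ dictated by \eqref{eq25}, identify $P(\Delta)=\Phi(\mathrm{HP}(\Delta))$ via the inversion $\mathcal W(x)=(1-x^D)^{-n}\mathcal H(x)$ of \eqref{eq3}, and recover equality of the original polygons by peeling off minimal slopes (which also makes the unconditional bounds $\mathrm{NP}\geq\mathrm{HP}$ unnecessary for the converse direction). The only step to make explicit is the one you flag: $\det(I-TA_a(f))$ is a $p$-adically entire Fredholm determinant, so by $p$-adic Weierstrass factorization its Newton-polygon slopes are exactly the $q$-adic orders of the reciprocal zeros $q^i\alpha_j$ of the convergent product, each occurring with horizontal multiplicity $\binom{n+i-1}{i}$ times that of $\alpha_j$ in $\mathrm{NP}(f)$; this is standard Dwork theory and completes the identity $\mathrm{NP}\bigl(\det(I-TA_a(f))\bigr)=\Phi(\mathrm{NP}(f))$.
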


\subsubsection{Boundary decomposition}\label{subsubsec233}
Let $f \in $ $\F_q[x_1^{\pm1}, \ldots, x_n^{\pm1}]$ with $\Delta=\Delta(f)$, where $\Delta$ is an $n$-dimensional integral convex polyhedron in $\R^n$ containing the origin. Let $C(\Delta)$ be the cone generated by $\Delta$ in $\R^n.$ 
\begin{Def}\label{defbd}
	The boundary decomposition 
	\begin{equation*}
	B(\Delta)=\{ \text{ the interior of a closed face in }C(\Delta) \text{ containing the origin} \}
	\end{equation*}
	is the unique interior decomposition of $C(\Delta)$ into a disjoint union of relatively open cones. 
\end{Def}
If the origin is a vertex of $\Delta$, then it is the unique 0-dimensional open cone in $B(\Delta)$. Recall that $A_1(f)=(a_{r,s}(f))$ is the infinite semilinear Frobenius matrix whose rows and columns are indexed by the lattice points in $L(\Delta)$. For $\Sigma \in B(\Delta)$, we define $A_1(\Sigma,f)$ to be the submatrix of $A_1(f)$ with $r,s \in \Sigma$. Let $f^{\overline{\Sigma}}$ be the restriction of $f$ to the closure of $\Sigma$. Then $A_1(\Sigma,f^{\overline{\Sigma}})$ denotes the submatrix of $A_1(f^{\overline{\Sigma}})$ with $r,s \in \Sigma$.

Let $B(\Delta)=\{\Sigma_0, \ldots, \Sigma_h\}$ such that $\text{dim}(\Sigma_i)\leq \text{dim}(\Sigma_{i+1})$, $i=0,\ldots,h-1.$ Define $B_{ij}=(a_{r,s}(f))$ with $ r\in \Sigma_i$ and $ s\in \Sigma_j$ $(0\leq i,j \leq h)$. After permutation, the infinite semilinear Frobenius matrix can be written as
\begin{equation}
A_1(f)=
\begin{pmatrix}
B_{00} &  B_{01}  & \cdots\quad &B_{0h}\\
B_{10} &  B_{11}  & \cdots\quad & B_{1h}\\
 \vdots & \vdots & \ddots  & \vdots  \\
B_{h0} & B_{h1}  & \cdots\quad & B_{hh}
\end{pmatrix},
\end{equation} 
where $B_{ij}=0$ for $i>j$. Then $\det(I-TA_1(f))=\prod_{i=0}^h\det(I-TB_{ii})$ and we have the boundary decomposition theorem.
\begin{Th}[Boundary decomposition\cite{Dwan1993}] 
	Let $f\in \F_q[x_1^{\pm1}, \ldots, x_n^{\pm1}]$ with $\Delta=\Delta(f)$. Then we have the following factorization
	\begin{equation*}
	\det(I-TA_1(f))=\prod_{\Sigma \in B(\Delta)}\det \left(I-TA_1(\Sigma,f^{\overline{\Sigma}})\right).
	\end{equation*}
\end{Th}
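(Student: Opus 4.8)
The plan is to build on the block-triangular factorization $\det(I-TA_1(f))=\prod_{i=0}^{h}\det(I-TB_{ii})$ recorded immediately before the statement. By construction $B_{ii}$ is the submatrix $(a_{r,s}(f))_{r,s\in\Sigma_i}$, which is exactly $A_1(\Sigma_i,f)$, and $\{\Sigma_0,\dots,\Sigma_h\}=B(\Delta)$; so the whole theorem reduces to the \emph{local identity}: for every $\Sigma\in B(\Delta)$ one has $A_1(\Sigma,f)=A_1(\Sigma,f^{\overline{\Sigma}})$. In other words, the diagonal block attached to $\Sigma$ should be insensitive to the monomials of $f$ whose exponent vectors do not lie on the closed cone $\overline{\Sigma}$.

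The single geometric input I would isolate first is the \emph{extreme-face property} of convex cones: if $G$ is a closed face of a convex cone $C$ and $a_1,\dots,a_m\in C$ with $a_1+\dots+a_m\in G$, then every $a_\ell\in G$. Since $\overline{\Sigma}$ is by definition a closed face of $C(\Delta)$ and all exponent vectors $V_j$ of $f$ lie in $\Delta\subset C(\Delta)$, this has two consequences. First, if $\rho\in\overline{\Sigma}$ and $\rho=\sum_j u_jV_j$ with $u_j\ge 0$, then each $u_jV_j\in\overline{\Sigma}$, so $u_j=0$ whenever $V_j\notin\overline{\Sigma}$; hence the infimum defining $w(\rho)$ ranges over representations using only exponents of $f^{\overline{\Sigma}}$, and $w(\rho)$ computed from $f$ coincides with $w(\rho)$ computed from $f^{\overline{\Sigma}}$ (a point worth stating carefully, since $\Delta(f^{\overline{\Sigma}})$ need not be full-dimensional and its weight function is a priori only defined on $\overline{\Sigma}$). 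For the same reason $F_\rho(f)=F_\rho(f^{\overline{\Sigma}})$ for $\rho\in\overline{\Sigma}$ — using $\lambda_0=1$ to discard the vanishing exponents from each term of the defining sum — while both sides vanish if $\rho\notin C(\Delta)$. Second, for $r,s\in\Sigma$ we have $ps\in\Sigma\subset\overline{\Sigma}$; if $ps-r\in C(\Delta)$ then $(ps-r)+r=ps\in\overline{\Sigma}$ forces $ps-r\in\overline{\Sigma}$ by the extreme-face property. Combining these, $a_{r,s}(f)=F_{ps-r}(f)\pi^{w(r)-w(s)}=F_{ps-r}(f^{\overline{\Sigma}})\pi^{w(r)-w(s)}=a_{r,s}(f^{\overline{\Sigma}})$ for all $r,s\in\Sigma$, which is precisely the local identity. (Incidentally the same property, applied with $s\in\Sigma_j$, re-proves the quoted fact $B_{ij}=0$ for $i>j$: $a_{r,s}(f)\ne 0$ forces $r\in\overline{\Sigma_j}$, so $\Sigma_i$ is a face of $\overline{\Sigma_j}$ and $\dim\Sigma_i\le\dim\Sigma_j$, with equality only if $i=j$.)

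Assembling gives $\det(I-TA_1(f))=\prod_{i=0}^{h}\det(I-TB_{ii})=\prod_{i=0}^{h}\det(I-TA_1(\Sigma_i,f))=\prod_{i=0}^{h}\det(I-TA_1(\Sigma_i,f^{\overline{\Sigma_i}}))=\prod_{\Sigma\in B(\Delta)}\det(I-TA_1(\Sigma,f^{\overline{\Sigma}}))$. The main obstacle is bookkeeping rather than depth: one must verify cleanly that $\overline{\Sigma}$ is genuinely a closed face of $C(\Delta)$ (so the extreme-face property is available) and that both the weight function and the Frobenius coefficients $F_\rho$ survive unchanged the passage from $f$ to the lower-dimensional restriction $f^{\overline{\Sigma}}$. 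A harmless edge case to record separately is $\Sigma=\{0\}$ when the origin is a vertex of $\Delta$, where both sides collapse to the $1\times 1$ block $(F_0)=(1)$ since $\lambda_0=1$.
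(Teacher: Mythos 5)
The paper does not prove this theorem itself---it is quoted from Wan with only the block-triangular factorization $\det(I-TA_1(f))=\prod_{i}\det(I-TB_{ii})$ recorded beforehand---and your argument is correct and follows exactly that intended route, supplying the one missing ingredient, namely the local identity $B_{ii}=A_1(\Sigma_i,f)=A_1(\Sigma_i,f^{\overline{\Sigma}_i})$ via the extreme-face property of the closed faces $\overline{\Sigma}$ of $C(\Delta)$ (which also re-justifies $B_{ij}=0$ for $i>j$). This is essentially Wan's own proof, so no further comparison is needed.
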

\begin{Cor}\label{coro12}
	Let $f \in \F_q[x_1^{\pm1}, \ldots, x_n^{\pm1}]$ be a non-degenerate Laurent polynomial with an $n$-dimensional Newton polyhedron $\Delta$. If the origin is a vertex of $\Delta$, then the associated L-function
	\begin{equation*}
	\LF^*(f,T)^{(-1)^{n-1}}=(1-\psi(\Tr(c))T)\prod^{n! \Vol(\Delta(f))-1}_{i=1}(1-\alpha_i T),
	\end{equation*}
	where $c$ is the constant term of $f$, $\Tr: \F_{q} \rightarrow \F_{p}$ is the trace map and $|\alpha_i| \leq q^{n/2}$.
\end{Cor}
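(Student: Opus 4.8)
The plan is to exhibit the factor $(1-\psi(\Tr(c))T)$ first inside the Fredholm determinant $\det(I-TA_a(f))$, using the boundary decomposition theorem, and then transfer it to $\LF^*(f,T)^{(-1)^{n-1}}$ by combining Dwork's trace formula with a $q$-adic slope comparison and Adolphson--Sperber's Theorem~\ref{thm2}. Since the origin is a vertex of $\Delta$, the relatively open cone $\{0\}$ is the unique $0$-dimensional member of $B(\Delta)$; ordering $B(\Delta)=\{\Sigma_0,\dots,\Sigma_h\}$ with $\Sigma_0=\{0\}$ and $\dim\Sigma_i\le\dim\Sigma_{i+1}$ makes $A_1(f)$ block upper triangular for the induced ordering of $L(\Delta)$, with $(0,0)$-block the $1\times1$ matrix $(a_{0,0}(f))=(F_0(f))$. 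Because $\{0\}$ is a vertex, the only solutions of $\sum_j u_jV_j=0$ with $u_j\in\Z_{\ge0}$ are those supported on the constant monomial, so $F_0(f)=\theta(\hat c)$, where $\hat c$ is the Teichm\"{u}ller lift of the constant term $c$ of $f$ (with $\hat 0=0$, $\theta(0)=1$); this agrees with the boundary decomposition theorem, which identifies this block with $A_1(\{0\},f^{\overline{\{0\}}})=(\theta(\hat c))$.

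Next I would pass from $A_1(f)$ to $A_a(f)=A_1(f)A_1(f)^\tau\cdots A_1(f)^{\tau^{a-1}}$. The $\tau$-twists and the product preserve the block upper triangular shape, so the Fredholm determinant of $A_a(f)$ is the product of those of its diagonal blocks, and the $0$-th diagonal block is $\prod_{k=0}^{a-1}\tau^k(\theta(\hat c))=\prod_{k=0}^{a-1}\theta(\hat c^{p^k})$, using $\tau(\pi)=\pi$ and $\tau(\hat c)=\hat c^{p}$. By the splitting property of $\theta$ this product equals $\psi(\Tr_{\F_q/\F_p}(c))=\psi(\Tr(c))$ (when $c=0$ the product is $1=\psi(0)$). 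Hence $(1-\psi(\Tr(c))T)$ is a factor of $\det(I-TA_a(f))$; in particular $\psi(\Tr(c))$ is one of its reciprocal roots.

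To finish I would invoke Theorem~\ref{thm2}: because $f$ is non-degenerate, $P(T):=\LF^*(f,T)^{(-1)^{n-1}}$ is a polynomial of degree $n!\Vol(\Delta)$, all of whose reciprocal roots $\alpha$ satisfy $|\alpha|=q^{\omega/2}$ with $\omega\in\Z\cap[0,n]$, and $\ord_q\alpha\in\Q\cap[0,n]$ (in particular $\ord_q\alpha\ge0$) by Deligne's integrality theorem. By the identity \eqref{eq25}, $\det(I-TA_a(f))=\prod_{i\ge0}\bigl(P(q^iT)\bigr)^{\binom{n+i-1}{i}}$, so the reciprocal roots of $\det(I-TA_a(f))$ are exactly the numbers $q^i\alpha$ for $i\ge0$ and $\alpha$ a reciprocal root of $P(T)$. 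Since $\psi(\Tr(c))$ is a $p$-power root of unity, $\ord_q\psi(\Tr(c))=0$; writing $\psi(\Tr(c))=q^i\alpha$ forces $i=0$ and $\ord_q\alpha=0$, so $\psi(\Tr(c))$ is itself a reciprocal root of $P(T)$. Pulling it out and bounding the remaining $n!\Vol(\Delta)-1$ reciprocal roots via Theorem~\ref{thm2} gives $P(T)=(1-\psi(\Tr(c))T)\prod_{i=1}^{n!\Vol(\Delta)-1}(1-\alpha_iT)$ with $|\alpha_i|\le q^{n/2}$, as claimed.

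The step I expect to be the main obstacle is the passage from $A_1(f)$ to $A_a(f)$: one must track carefully that the block upper triangular structure coming from the boundary decomposition survives the twisted product, and apply correctly the Dwork splitting identity $\prod_{k=0}^{a-1}\theta(\hat c^{p^k})=\psi(\Tr(c))$, including the degenerate case $c=0$. An alternative that avoids the splitting identity is to reduce to $c=0$ at the outset: since $S_k^*(f)=\psi(\Tr(c))^kS_k^*(f-c)$ one has $\LF^*(f,T)=\LF^*(f-c,\psi(\Tr(c))T)$, so it is enough to show $(1-T)\mid\LF^*(g,T)^{(-1)^{n-1}}$ for $g=f-c$, which follows from the same argument with $F_0(g)=1$.
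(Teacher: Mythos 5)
Your proposal is correct and follows essentially the route the paper intends: the corollary is stated as a consequence of Wan's boundary decomposition (the origin cone contributing the $1\times1$ block whose Fredholm factor is $1-\psi(\Tr(c))T$), transferred to $\LF^*(f,T)^{(-1)^{n-1}}$ via Dwork's trace formula \eqref{eq25} and the slope comparison, with the degree and the bound $|\alpha_i|\leq q^{n/2}$ coming from Theorem~\ref{thm2}. Your handling of the $\tau$-twisted product defining $A_a(f)$ and of the splitting identity $\prod_{k=0}^{a-1}\theta(\hat c^{p^k})=\psi(\Tr(c))$ (including $c=0$) is exactly the standard argument the paper relies on.
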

\medskip

\subsection{Diagonal local theory}\label{subsec24}
In this subsection, we give some non-degenerate and ordinary criteria for the diagonal Laurent polynomials whose Newton polyhedrons are simplices.
\begin{Def}
	A Laurent polynomial $f \in  \F_{q}[x_1^{\pm1},\ldots, x_n^{\pm1}]$ is called diagonal if $f$ has exactly $n$ non-constant terms and $\Delta(f)$ is an $n$-dimensional simplex in $\R^n.$
\end{Def}
Let $f$ be a Laurent polynomial over  $\F_{q}$,
\begin{equation*}
f(x_1,x_2, \ldots x_n)=\sum_{j=1}^n a_j x^{V_j},
\end{equation*}
 where $a_j \in \F ^*_{q}$ and $V_j=(v_{1j},\ldots, v_{nj})\in \Z^n,\quad  j=1,2,\ldots,n.$ Let $\Delta=\Delta(f)$. Then the vertex matrix of $\Delta$ is defined to be
\begin{equation*}
M(\Delta)=(V_1,\ldots,V_n),
\end{equation*} 
where the $i$-th column is the $i$-th exponent of $f$. If $f$ is diagonal, $M(\Delta)$ is invertible.
 \begin{Prop}\label{propc1}
	Suppose $f\in \F_{q}[x_1^{\pm1},\ldots, x_n^{\pm1}]$ is diagonal with $\Delta=\Delta(f)$. Then $f$ is non-degenerate if and only if $p$ is relatively prime to $\det(M(\Delta))$. 
\end{Prop}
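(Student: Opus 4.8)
The plan is to make the non‑degeneracy condition completely explicit using the diagonal structure, and then to translate it into a statement of linear algebra over $\F_p$. Write $f=\sum_{j=1}^n a_jx^{V_j}$, where $V_j$ is the $j$-th column of $M=M(\Delta)$. Since $\Delta$ is the simplex $\mathrm{conv}(0,V_1,\dots,V_n)$, I would first observe that every closed face of $\Delta$ not containing the origin has the form $\delta_S=\mathrm{conv}(\{V_j:j\in S\})$ for a nonempty $S\subseteq\{1,\dots,n\}$, with restriction $f^{\delta_S}=\sum_{j\in S}a_jx^{V_j}$. A direct differentiation gives $x_i\,\partial f^{\delta_S}/\partial x_i=\sum_{j\in S}v_{ij}\,(a_jx^{V_j})$, so on the torus $x_1\cdots x_n\neq 0$ the $n$ partials of $f^{\delta_S}$ vanish simultaneously at a point $x$ precisely when $M_S\,y=0$ in $\overline{\F}_q^{\,n}$, where $M_S$ denotes the submatrix of $M$ formed by the columns indexed by $S$ and $y=(a_jx^{V_j})_{j\in S}$, a vector all of whose coordinates are nonzero. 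Hence degeneracy is equivalent to the existence, for some nonempty $S$, of a nowhere-zero kernel vector of $M_S$ over $\overline{\F}_q$ which is in addition realized as $(a_jx^{V_j})_{j\in S}$ for an actual torus point $x$.

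For the direction $p\nmid\det M\Rightarrow$ non-degenerate, I would argue that the columns $V_1,\dots,V_n$ of $M$ are then linearly independent over $\F_p$, hence so is every subset of them; in particular each $M_S$ has full column rank over $\overline{\F}_q$, so $M_S\,y=0$ forces $y=0$, contradicting that $y$ has no zero coordinate. Thus no face $\delta_S$ can contribute a common zero on the torus.

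For the converse, assuming $p\mid\det M$ I would pick $S$ minimal for inclusion among the nonempty subsets for which $\{V_j:j\in S\}$ is linearly dependent over $\F_p$, take a nontrivial relation $\sum_{j\in S}c_jV_j\equiv 0\pmod p$, and note that minimality of $S$ forces every $c_j$ to be nonzero (a ``circuit''). Since $\{V_j:j\in S\}$ is linearly independent over $\Q$ — being a subfamily of the columns of the rationally invertible $M$ — the monomial map $x\mapsto(x^{V_j})_{j\in S}$ from $(\overline{\F}_q^{\,*})^n$ onto $(\overline{\F}_q^{\,*})^S$ is surjective, so I can choose $x$ on the torus with $a_jx^{V_j}=c_j$ for all $j\in S$. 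For that $x$ one gets $x_i\,\partial f^{\delta_S}/\partial x_i=(M_Sc)_i=0$, hence $\partial f^{\delta_S}/\partial x_i(x)=0$ for every $i$, so $f$ is degenerate.

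The derivative computation and the extraction of a circuit are routine. The step I expect to be the main obstacle — or at least the one demanding care — is the surjectivity of the monomial map $(\overline{\F}_q^{\,*})^n\to(\overline{\F}_q^{\,*})^S$ in characteristic $p$: this rests on $\overline{\F}_q^{\,*}$ being divisible (the $m$-th power map is onto for every $m\geq1$, including those divisible by $p$, via bijectivity of Frobenius), so that a torus homomorphism given by an integer matrix of full rank over $\Q$ is surjective on geometric points. One also has to keep track of the fact that only faces not containing the origin are relevant, which is automatic here because each $\delta_S$ is spanned by nonzero vertices of the simplex $\Delta$.
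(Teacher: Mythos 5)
Your proof is correct. The paper itself states Proposition \ref{propc1} without proof, quoting it from the diagonal local theory (Wan), and your argument is essentially the standard one for that criterion: using logarithmic derivatives to convert the common-zero condition on each face $\delta_S$ into the linear system $M_S y=0$ with $y=(a_jx^{V_j})_{j\in S}$ nowhere zero, and, for the converse, extracting a circuit relation mod $p$ and realizing it on the torus via surjectivity of the monomial map (Smith normal form plus divisibility of $\overline{\F}_q^{\,*}$, with Frobenius handling exponents divisible by $p$). No gaps; the only points demanding care — that the faces missing the origin are exactly the $\delta_S$, and the surjectivity in characteristic $p$ — are both addressed.
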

Let $S(\Delta)$ be the solution set of the following linear system
\begin{align*}
M(\Delta)
\begin{pmatrix}
r_1\\ r_2 \\ \vdots \\ r_n
\end{pmatrix}
\equiv 0 (\bmod1),\quad r_i \in \Q \cap [0,1).
\end{align*}
It's easy to prove that $S(\Delta)$ is an abelian group and its order is given by
\begin{align}\label{eq2}
	\left|\det{M(\Delta)}\right|=n!\Vol(\Delta).
\end{align}
By the fundamental structure theorem of finite abelian group, we decompose $S(\Delta)$ into a direct product of invariant factors,
\begin{equation*}
S(\Delta)=\bigoplus_{i=0}^n\Z/d_i\Z,
\end{equation*}
where $d_i|d_{i+1}$ for $i=1,2,\ldots, n-1.$ By the Stickelberger theorem for Gauss sums, we have the following ordinary criterion for a non-degenerate Laurent polynomial\cite{Dwan2004}.
\begin{Prop}\label{prop15}
	Suppose $f\in \F_{q}[x_1^{\pm1},\ldots, x_n^{\pm1}]$ is a non-degenerate diagonal Laurent polynomial with $\Delta=\Delta(f)$. Let $d_n$ be the largest invariant factor of $S(\Delta)$. If $p\equiv 1 (\bmod d_n)$, then $f$ is ordinary at $p$. 
\end{Prop}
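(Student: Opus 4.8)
The plan is to compute $\LF^*(f,T)^{(-1)^{n-1}}$ explicitly as a product of Gauss sums indexed by the finite group $S(\Delta)$, to read off the $q$-adic valuations of its reciprocal roots from Stickelberger's theorem, and to compare the resulting Newton polygon with $\mathrm{HP}(\Delta)$, which for a simplex admits an equally explicit description. Since a constant term of $f$ only multiplies every reciprocal root by a root of unity, we may assume $f=\sum_{j=1}^{n}a_jx^{V_j}$ has no constant term; then $0$ is a vertex of $\Delta$, $M=M(\Delta)=(V_1,\dots,V_n)$ is invertible, $f$ is non-degenerate by Proposition \ref{propc1}, and $\LF^*(f,T)^{(-1)^{n-1}}$ is a polynomial of degree $n!\Vol(\Delta)=|S(\Delta)|$ by Theorem \ref{thm2}.

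First I would expand each factor $\psi(\Tr_k(a_jx^{V_j}))$ over the multiplicative characters of $\F_{q^k}^{*}$ via the Gauss-sum identity and carry out the character sums in $x_1,\dots,x_n$. Orthogonality forces the character tuple $(\omega_k^{-m_1},\dots,\omega_k^{-m_n})$ to satisfy $M\vec m\equiv 0\pmod{q^k-1}$, equivalently $\vec r:=\vec m/(q^k-1)\in S(\Delta)$; and since $p\equiv 1\pmod{d_n}$ forces $d_n\mid q^k-1$ for every $k$, this correspondence is onto $S(\Delta)$ for all $k$. Each such $\vec r$ has order dividing $d_n$, so the surviving characters are lifts along the norm $N_{\F_{q^k}/\F_q}$ of characters $\chi_j=\omega^{-\frac{q-1}{d_n}m^{(1)}_j}$ of $\F_q^{*}$, where $r_j=m^{(1)}_j/d_n$ with $0\le m^{(1)}_j<d_n$. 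Applying the Hasse--Davenport relation to replace the degree-$k$ Gauss sums by powers of degree-$1$ Gauss sums, one obtains, with $T(\vec r)=\{\,j:r_j\neq 0\,\}$,
\begin{equation*}
S_k^*(f)=(-1)^{n}\sum_{\vec r\in S(\Delta)}\gamma_{\vec r}^{\,k},\qquad \gamma_{\vec r}=(-1)^{\#T(\vec r)}\prod_{j\in T(\vec r)}\bar\chi_j(a_j)\,g(\chi_j),
\end{equation*}
where $g(\chi_j)$ is a Gauss sum over $\F_q$; hence $\LF^*(f,T)^{(-1)^{n-1}}=\prod_{\vec r\in S(\Delta)}(1-\gamma_{\vec r}T)$.

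Next I would compute $\ord_q\gamma_{\vec r}$. The factors $(-1)^{\#T(\vec r)}$ and $\bar\chi_j(a_j)$ are $q$-adic units, so $\ord_q\gamma_{\vec r}=\sum_{j\in T(\vec r)}\ord_q g(\chi_j)$. By Stickelberger's theorem, $\ord_q g(\omega^{-m})=s_p(m)/(a(p-1))$ for $0\le m<q-1$, where $s_p$ is the base-$p$ digit sum and $q=p^{a}$. Here $m=\tfrac{q-1}{d_n}m^{(1)}_j$; writing $p-1=d_n\ell$ gives $\tfrac{q-1}{d_n}=\ell\,(1+p+\dots+p^{a-1})$, so $m=\ell m^{(1)}_j(1+p+\dots+p^{a-1})$, and since $\ell m^{(1)}_j\le \tfrac{(p-1)(d_n-1)}{d_n}<p$ there are no carries: every base-$p$ digit of $m$ equals $\ell m^{(1)}_j$, whence $s_p(m)=a\ell m^{(1)}_j$ and $\ord_q g(\chi_j)=\ell m^{(1)}_j/(p-1)=m^{(1)}_j/d_n=r_j$. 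Therefore $\ord_q\gamma_{\vec r}=\sum_{j=1}^{n}r_j$. \textbf{This carry-free digit computation is the heart of the argument and is precisely where the hypothesis $p\equiv 1\pmod{d_n}$ enters}: in general the digits of $\tfrac{q-1}{d_n}m^{(1)}_j$ need not be constant and $\ord_q\gamma_{\vec r}$ can strictly exceed $\sum_j r_j$, so $\mathrm{NP}(f)$ may lie strictly above $\mathrm{HP}(\Delta)$.

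Finally I would identify $\mathrm{HP}(\Delta)$ for the simplex. Every lattice point of $C(\Delta)$ is uniquely $\sum_j c_jV_j$ with $c_j\ge 0$, and each vertex satisfies $w(V_j)=1$; splitting each $c_j$ into its integer and fractional parts identifies $\Z^{n}\cap C(\Delta)$ with (the half-open parallelepiped $\cap\,\Z^{n}$)$\,\times\,\Z_{\ge0}^{n}$, the parallelepiped being in bijection with $S(\Delta)$ via $\vec r\mapsto M\vec r$, a point of weight $\sum_j r_j$. Hence $\sum_k W_\Delta(k)x^{k}=(1-x^{D})^{-n}\sum_{\vec r\in S(\Delta)}x^{D\sum_j r_j}$, and by Definition \ref{def5}, $\sum_k H_\Delta(k)x^{k}=(1-x^{D})^{n}\sum_k W_\Delta(k)x^{k}=\sum_{\vec r\in S(\Delta)}x^{D\sum_j r_j}$; so the side of $\mathrm{HP}(\Delta)$ of slope $k/D$ has horizontal length $\#\{\vec r\in S(\Delta):\sum_j r_j=k/D\}$. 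On the other hand, by Lemma \ref{le4} the side of $\mathrm{NP}(f)$ of slope $k/D$ has length $\#\{\vec r\in S(\Delta):\ord_q\gamma_{\vec r}=k/D\}$, which by the previous step equals $\#\{\vec r:\sum_j r_j=k/D\}$. The two polygons coincide, so $f$ is ordinary at $p$. The principal obstacle is the bookkeeping in the second step — tracking which characters survive orthogonality, their orders, the Hasse--Davenport normalization, and the sign contributions — together with the carry-free estimate in the third step; both are standard but require care.
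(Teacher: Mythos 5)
Your proposal is correct, and it is precisely the standard argument the paper points to: the paper does not prove this proposition itself but quotes it from Wan's diagonal local theory ("by the Stickelberger theorem for Gauss sums," citing \cite{Dwan2004}), where the proof runs exactly as you describe — express $\LF^*(f,T)^{(-1)^{n-1}}$ as a product of Gauss sums indexed by $S(\Delta)$, use $p\equiv 1\ (\bmod\ d_n)$ to make the Stickelberger digit-sum computation carry-free so that $\ord_q\gamma_{\vec r}=\sum_j r_j$, and match this slope multiset with the explicit Hodge numbers of the simplex. So your write-up fills in, correctly, the same route the paper relies on by citation.
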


\section{Proof of the Main Theorem}\label{sec3}
We prove the main theorem in this section. Let $n=\sum^{r}_{i=1}n_i$ be a partition of a positive integer $n$.
Recall that for $b_{ij} \in \mathbb{Z}_{>0}$ and $p \nmid b_{ij}$ $(1\leq i \leq r,\ 1\leq j\leq n_i)$, the exponential sum $S_k(\vec{a})$ has the expression,
\begin{align*}
S_k(\vec{a})=\sum_{{\sum^{r}_{i=1}\frac{a_i}{\prod_{j=1}^{n_i}x_{ij}^{b_{ij}}}=1}}\psi\left(\Tr_k\left(\sum^{r}_{i=1}\sum^{n_i}_{j=1}x_{ij}\right)\right).
\end{align*}
where $x_{ij}\in\F_{q^k}^*$, $a_i\in \F_{q}^*$, $\psi: \F_{p} \rightarrow \mathbb{C}^*$ is a nontrivial additive character and $\Tr_k: \F_{q^k} \rightarrow \F_{p}$ is the trace map. The L-function associated to $S_k(\vec{a})$ is defined by 
\begin{align*}
\LF(\vec{a},T)=\exp  \left(\sum^\infty _ {k=1} S_k(\vec{a}) \frac{T^k}{k}\right).
\end{align*}
For the sake of brevity, let $N_i=\sum^{i}_{l=1}n_l$ for $1\leq i\leq r$ and $N_0=0$.
Denote $x_{ij}=x_{N_{i-1}+j}$ and $b_{ij}=b_{N_{i-1}+j}$ ($1\leq i \leq r$, $1\leq j \leq n_i$).
Let $f\in \F_{q^k}[x_1^{\pm1},\ldots, x_{n+1}^{\pm1}]$ be the Laurent polynomial defined by  
\begin{align*}
f(x_1,\cdots,x_{n+1})=\sum_{i=1}^{n}x_i+x_{n+1}\left({\sum^{r}_{i=1}\frac{a_i}{\prod_{l=N_{i-1}+1}^{N_i}x_{l}^{b_{l}}}}-1\right).
\end{align*}
We have 
\begin{align}\label{eqs}
S_k(\vec{a})=\frac{1}{q^k}\mathop{\sum_{x_1,\cdots,x_n\in\F^*_{q^k}}}_{x_{n+1}\in\F_{q^k}}\psi\left(\Tr_k\left(f\right)\right)
=\frac{(-1)^n}{q^k}+\frac{1}{q^k}\sum_{x_i\in\F^*_{q^k}}\psi\left(\Tr_k\left(f\right)\right).
\end{align}
Define 
\begin{align*}
S_k^*(f)=\sum_{x_i\in\F^*_{q^k}}\psi(\Tr_k(f))\quad\text{and}\quad \LF^*(f,T)=\exp  \left(\sum^\infty _ {k=1} S^*_k (f) \frac{T^k}{k} \right).
\end{align*}
It follows from formula (\ref{eqs}) that
\begin{align}\label{eql}
\LF(\vec{a},T)^{(-1)^n}=\frac{1}{1-T/q}\LF^*(f,T/q)^{(-1)^n}.
\end{align}

The aim of this paper is to determine the slopes of $\LF(\vec{a},T)^{(-1)^n}$. By formula (\ref{eql}), it suffices to consider the slopes of $\LF^*(f,T)^{(-1)^n}$ instead.
We will prove it later that $\LF(\vec{a},T)^{(-1)^n}$ is a polynomial under a restriction on $p$.
Let $\Delta=\Delta(f)$ denote the Newton polyhedron corresponding to the Laurent polynomial $f$.  Vertices of $\Delta$ are as follows.
\allowdisplaybreaks
\begin{align*}
&V_0 = (0,\cdots,0)\\
&V_1 = (1,0,\cdots,0),\\
&\vdots\\
&V_{n+1} = (0,\cdots,0,1),\\
&V_{n+2} = (\underbrace{-b_1,\cdots,-b_{n_1}}_{n_1},0,\cdots,0,1),\\
&\vdots\\
&V_{n+i+1} = (0,\cdots,0,\underbrace{-b_{N_{i-1}+1},\cdots,-b_{N_i}}_{n_i},0,\cdots,0,1),\\
&\vdots\\
&V_{n+r+1} = (0,\cdots,0,\underbrace{-b_{N_{r-1}+1},\cdots,-b_{N_r}}_{n_r},1).
\end{align*}
	\begin{figure}[htbp] 
	\centering 	
	\begin{tikzpicture}
	\node[right] (0) at (0,-4) {$V_0$};
	\node[left] (1) at (-2,0) {$V_1$};
	\node[above right] (2) at (-1.5,-0.5) {$V_2$};	
	\node[above right] (3) at (0.5,-0.5) {$V_3$};
	\node[right] (4) at (2,0) {$V_4$};
	\node[above] (5) at (0,1.5) {$V_5$};
	\node[above] (6) at (1,0.5) {$V_6$};
	\node[above] (7) at (-0.5,0.5) {$V_7$};
	\fill[fill=blue] (0,-4) circle (2pt);
	\fill (-2,0) circle (1.5pt);
	\fill (-1.5,-0.5) circle (1.5pt);
	\fill (0.5,-0.5) circle (1.5pt);
	\fill (2,0) circle (1.5pt);
	\fill[fill=blue] (0,1.5) circle (2pt);
	\fill (1,0.5) circle (1.5pt);
	\fill (-0.5,0.5) circle (1.5pt);
	\draw (-2,0)--(-1.5,-0.5)--(0.5,-0.5)--(2,0);
	\draw [dashed] (-2,0)--(-0.5,0.5)--(1,0.5)--(2,0);
	\draw (0,-4)-- (-2,0) ;
	\draw (0,-4)-- (-1.5,-0.5) ;
	\draw (0,-4)-- (0.5,-0.5) ;
	\draw (0,-4)-- (2,0) ;
	\draw (0,1.5)-- (-2,0) ;
	\draw (0,1.5)-- (-1.5,-0.5) ;
	\draw (0,1.5)-- (0.5,-0.5) ;
	\draw (0,1.5)-- (2,0) ;
	\draw [dashed] (0,-4)-- (1,0.5) ;
	\draw [dashed] (0,-4)-- (-0.5,0.5) ;
	\draw [dashed] (0,1.5)-- (1,0.5) ;
	\draw [dashed] (0,1.5)-- (-0.5,0.5) ;
	\fill[fill=gray, opacity=0.3] (-2,0)--(-1.5,-0.5)--(0.5,-0.5)--(2,0)--(1,0.5)--(-0.5,0.5);
	\end{tikzpicture}
	\caption{$\Delta$ for $n=4$, $r=2$} 
\end{figure}
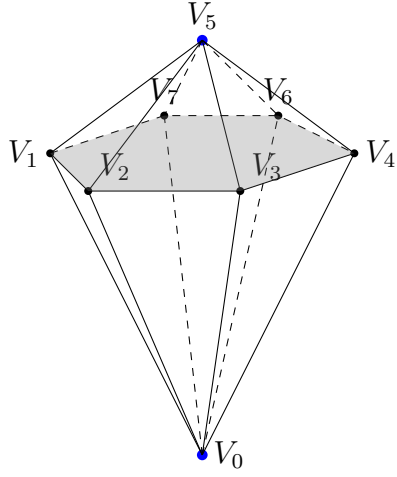
Claim that $\dim\Delta=n+1$ and $\Delta$ has $n+r+2$ vertices. 
Let $\delta_i$ be a co-dimension 1 face of $\Delta$ not containing the origin. Here is a criterion for $\delta_i$.
\begin{Prop}\label{prop20}
	Let $h_i(x)=\sum^{n+1}_{j=1}e_jx_j-1$ be the equation of $\delta_i$, where $e_j$ is uniquely determined rational numbers not all zero.  For any vertex $V$ of $\Delta$, one has $h_i(V)\leq0$.
\end{Prop}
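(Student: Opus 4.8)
The plan is to characterize the codimension-1 faces $\delta_i$ of $\Delta$ that avoid the origin, and to verify directly that the defining linear functional of each such face, normalized to have constant term $-1$ in the form $h_i(x) = \sum_j e_j x_j - 1$, is non-positive on every vertex of $\Delta$. The key structural observation is that $\Delta$ is the convex hull of $V_0 = \vec 0$ together with the "top" vertices $V_1,\dots,V_{n+1}$ (which all lie in the hyperplane region where the last coordinate is $1$ or where $x_k=1$ for some $k\le n$) and the "bottom" vertices $V_{n+2},\dots,V_{n+r+1}$ (which have last coordinate $1$ but negative entries in one of the blocks). Since $\Delta$ is an $(n+1)$-dimensional polytope with the origin as a vertex, a hyperplane $H$ supporting a codimension-1 face not through the origin must put the origin strictly on one side; writing $H = \{x : \ell(x) = c\}$ with $\ell$ linear and $\ell(\vec 0) = 0 \ne c$, after rescaling we may take $c = 1$, so $\ell(x) = \sum_j e_j x_j$ and the face equation is $h_i(x) = \ell(x) - 1 = 0$. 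That $\ell$ is uniquely determined follows because a codimension-1 face of a full-dimensional polytope spans an affine hyperplane, and that hyperplane does not pass through the origin, so it is cut out by a unique affine equation with constant term normalized to $-1$.

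First I would set up the supporting-hyperplane formalism: for each facet $\delta_i$, the hyperplane $H_i = \{h_i = 0\}$ is a supporting hyperplane of the convex polytope $\Delta$, so all of $\Delta$ — in particular all its vertices — lies in one of the two closed half-spaces $\{h_i \le 0\}$ or $\{h_i \ge 0\}$. Second, I would pin down the sign: evaluating at $V_0 = \vec 0$ gives $h_i(V_0) = -1 < 0$, so the vertex $V_0$ lies in the open half-space $\{h_i < 0\}$. Since $\Delta$ lies entirely in one closed half-space and one of its vertices lies strictly in $\{h_i < 0\}$, that half-space must be $\{h_i \le 0\}$ (it cannot be $\{h_i \ge 0\}$, as that would force $h_i(V_0) \ge 0$). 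Hence $h_i(V) \le 0$ for every vertex $V$ of $\Delta$, which is exactly the claim.

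The only genuine content beyond this soft convexity argument is the uniqueness of the $e_j$ and the fact that such a normalization is possible at all, i.e.\ that no facet of $\Delta$ passes through the origin other than being flagged out of the statement — but the hypothesis of the proposition already restricts to $\delta_i$ not containing the origin, so a facet hyperplane through a face avoiding $V_0$ automatically misses $V_0$, making the constant-term-$(-1)$ normalization legitimate. I expect the main (and only real) obstacle to be purely bookkeeping: confirming that $\Delta$ is genuinely $(n+1)$-dimensional with the asserted $n+r+2$ vertices and that $V_0$ is a vertex (not in the interior or on a proper face spanned by the others), since the half-space argument needs $V_0$ to be an actual point of $\Delta$. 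This follows by exhibiting $\Delta$ as the convex hull of the listed points and checking, via the explicit coordinates, that $V_0$ cannot be written as a convex combination of the others — e.g.\ any convex combination of $V_1,\dots,V_{n+r+1}$ with positive last-block structure has either a strictly positive coordinate among $x_1,\dots,x_n$ or a strictly positive $x_{n+1}$, whereas $V_0$ has all coordinates zero. Once $V_0$ is confirmed to be a vertex, the argument above closes immediately.
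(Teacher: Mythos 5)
Your argument is correct: since $\delta_i$ is a face of the convex polytope $\Delta$ cut out by a supporting hyperplane that cannot pass through the origin (otherwise $0\in\Delta\cap H=\delta_i$), the normalization $h_i(x)=\sum_j e_jx_j-1$ is legitimate and unique, and evaluating at $V_0=\vec 0$, which satisfies $h_i(V_0)=-1<0$, forces all of $\Delta$, hence every vertex, into the half-space $\{h_i\le 0\}$. The paper states this proposition without proof, and your supporting-hyperplane argument is the natural intended one; the only superfluous step is verifying that $V_0$ is a vertex --- the argument needs only $0\in\Delta$, which holds by the definition of the Newton polyhedron as the convex hull of the origin and the exponent vectors.
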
 
This deduces the following proposition.
\begin{Prop}
	Every co-dimension 1 face $\delta_i$ contains $V_{n+1}$ as a vertex.
\end{Prop}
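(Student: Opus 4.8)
The plan is to deduce this directly from Proposition~\ref{prop20}, which says that the defining linear form $h_i(x)=\sum_{j=1}^{n+1}e_jx_j-1$ of any co-dimension $1$ face $\delta_i$ not containing the origin satisfies $h_i(V)\le 0$ for every vertex $V$ of $\Delta$. Since $\delta_i$ is a co-dimension $1$ face of the $(n+1)$-dimensional polytope $\Delta$, it has dimension $n$, hence contains at least $n+1$ affinely independent vertices of $\Delta$; call the set of vertices lying on $\delta_i$ by $S_i$. A vertex $V$ lies on $\delta_i$ precisely when $h_i(V)=0$, and by the proposition all other vertices have $h_i(V)<0$. So the content of the claim is exactly: $V_{n+1}=(0,\dots,0,1)\in S_i$, i.e. $h_i(V_{n+1})=0$.

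First I would record the key structural observation about the vertex list: the last coordinate $x_{n+1}$ is $0$ exactly for the vertices $V_0,V_1,\dots,V_n$ and is $1$ for $V_{n+1},V_{n+2},\dots,V_{n+r+1}$. I want to argue that $\delta_i$ cannot be contained in the hyperplane $x_{n+1}=0$ — indeed that hyperplane passes through the origin $V_0$, whereas $\delta_i$ does not contain the origin, so $\delta_i$ is not the face cut out by $x_{n+1}=0$, and more importantly $S_i$ must contain at least one vertex with $x_{n+1}=1$. So among the vertices $V_{n+1},\dots,V_{n+r+1}$ at least one lies on $\delta_i$. Now I would use the evaluations of $h_i$ on this family: for the ``fan'' of vertices $V_{n+1}, V_{n+2},\dots,V_{n+r+1}$, note that $V_{n+1}$ is the coordinate point $e_{n+1}$ while each $V_{n+i+1}$ differs from $V_{n+1}$ only in the block of coordinates indexed $N_{i-1}+1,\dots,N_i$, where it has the strictly negative entries $-b_\ell$. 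Hence, writing out $h_i$,
\begin{align*}
h_i(V_{n+i+1}) &= e_{n+1} - \sum_{\ell=N_{i-1}+1}^{N_i} b_\ell e_\ell - 1 = h_i(V_{n+1}) - \sum_{\ell=N_{i-1}+1}^{N_i} b_\ell e_\ell.
\end{align*}

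Then the argument closes as follows. Suppose for contradiction that $h_i(V_{n+1})<0$ (it is $\le 0$ by Proposition~\ref{prop20}). We know $S_i$ contains some $V_{n+i+1}$ with $h_i(V_{n+i+1})=0$, so from the displayed identity $\sum_{\ell=N_{i-1}+1}^{N_i} b_\ell e_\ell = h_i(V_{n+1}) < 0$, so at least one $e_\ell<0$ with $N_{i-1}+1\le\ell\le N_i$. But then evaluate $h_i$ at the coordinate vertex $V_\ell=e_\ell$ (one of $V_1,\dots,V_n$): $h_i(V_\ell)=e_\ell - 1$. This is $\le 0$ automatically and gives no contradiction directly — so instead I would evaluate at $V_0$: $h_i(V_0)=-1\ne 0$, consistent, still no contradiction. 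The cleaner route: since $S_i$ is contained in the affine hyperplane $\{h_i=0\}$ which does not pass through the origin, and $S_i$ affinely spans that hyperplane, the $n+1$ points of $S_i$ are affinely independent, hence linearly independent as vectors in $\R^{n+1}$ (a hyperplane missing the origin contains a basis of $\R^{n+1}$ among any affinely independent spanning set). So $S_i$ is a basis of $\R^{n+1}$; in particular exactly $n+1$ vertices lie on $\delta_i$, and every vertex not on $\delta_i$ is a linear combination of $S_i$ with $h_i$-value strictly negative. I would then use the concrete geometry: if $V_{n+1}\notin S_i$, then $S_i\subseteq\{V_0,\dots,V_n\}\cup\{V_{n+2},\dots,V_{n+r+1}\}$ and one shows this set cannot be linearly independent of full rank $n+1$ while having all other vertices (notably $V_{n+1}$) strictly below — more precisely, writing $V_{n+1}=\sum_{V\in S_i}c_V V$ forces, by looking at the first $n$ coordinates, a linear relation among the $V_\ell=e_\ell$ that are present together with the $V_{n+i+1}$, and comparing $h_i$-values ($\sum c_V h_i(V) = \sum c_V\cdot 0 = 0$ versus $h_i(V_{n+1})<0$ if also $\sum c_V=1$) yields the contradiction once one checks $\sum_V c_V=1$, which follows from reading off the $(n+1)$-st coordinate.

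The main obstacle I anticipate is making the last linear-algebra step clean rather than a tangle of coordinate bookkeeping: the natural and robust formulation is the ``barycentric'' one — because $\{h_i=0\}$ avoids the origin, every point $V_{n+1}$ is uniquely $\sum_{V\in S_i}c_V V$ with $\sum_V c_V=1$ (the affine-coordinate property), and applying $h_i$ gives $h_i(V_{n+1})=\sum c_V h_i(V)=0$, contradicting $h_i(V_{n+1})<0$. So in fact once one knows $h_i(V_{n+1})\le 0$ (Proposition~\ref{prop20}) and that $S_i$ affinely spans a hyperplane off the origin, the only thing left is to rule out $h_i(V_{n+1})<0$, and the barycentric identity does exactly that without needing to identify $S_i$ explicitly. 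I would present the proof in that order: (1) $\delta_i$ is $n$-dimensional and misses the origin, so its vertex set $S_i$ is an affine basis of a hyperplane not through $0$; (2) hence $V_{n+1}$ has barycentric coordinates summing to $1$ relative to $S_i$; (3) apply $h_i$ and invoke Proposition~\ref{prop20} to get $h_i(V_{n+1})=0$, i.e. $V_{n+1}\in\delta_i$.
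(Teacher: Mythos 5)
Your closing ``barycentric'' step is circular, and it is the step carrying the whole proof. You argue that since the hyperplane $\{h_i=0\}$ misses the origin, the vertex set $S_i$ of $\delta_i$ is a linear basis of $\R^{n+1}$ (fine, granting for the moment that $\delta_i$ has exactly $n+1$ vertices), and then that $V_{n+1}=\sum_{V\in S_i}c_VV$ with $\sum_{V}c_V=1$, whence $h_i(V_{n+1})=\sum_V c_Vh_i(V)=0$. But for a basis lying in the affine hyperplane $\{\sum_j e_jx_j=1\}$, applying the linear form $\sum_j e_jx_j$ to $W=\sum_V c_VV$ gives $\sum_V c_V=h_i(W)+1$; so ``$\sum_V c_V=1$'' is \emph{equivalent} to $h_i(W)=0$, i.e.\ to the very statement you want for $W=V_{n+1}$. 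Your proposed justification, ``reading off the $(n+1)$-st coordinate,'' does not supply it: if $V_{n+1}\notin S_i$, the members of $S_i$ have last coordinate $0$ (the $V_\ell$, $\ell\le n$) or $1$ (the $V_{n+1+j}$, $j\ge1$), so the last coordinate only yields that the coefficients of the $V_{n+1+j}$ in $S_i$ sum to $1$ and constrains the coefficients of the $V_\ell$ not at all; the total $\sum_V c_V$ need not be $1$. (You also assert that $\delta_i$ has exactly $n+1$ affinely independent vertices, i.e.\ is a simplex; that is true here but is only established later, in Proposition~\ref{eqv}, and a codimension-one face of a general polytope need not be one.)

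What is missing is an argument that uses the specific vertex list beyond the two facts you invoke. One that works: suppose $h_i(V_{n+1})<0$, i.e.\ $e_{n+1}<1$ (Proposition~\ref{prop20} gives $e_{n+1}\le 1$). Your observation that $V_1,\dots,V_n$ affinely span only an $(n-1)$-dimensional set correctly forces some $V_{n+1+j}$, $j\ge 1$, to lie on $\delta_i$; for every such $j$ the equation $e_{n+1}-\sum_{\ell=N_{j-1}+1}^{N_j}b_\ell e_\ell=1$ gives $\sum_{\ell=N_{j-1}+1}^{N_j}b_\ell e_\ell=e_{n+1}-1<0$, so some $\ell$ in that block has $e_\ell\neq1$, i.e.\ $V_\ell\notin\delta_i$. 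Thus each block contributing a vertex $V_{n+1+j}$ to $\delta_i$ loses at least one of its coordinate vertices $V_\ell$, and counting shows $\delta_i$ contains at most $n$ vertices of $\Delta$ --- too few to affinely span an $n$-dimensional face. That counting contradiction is exactly the point at which your draft stalls (``still no contradiction''), and without it, or the unjustified sum-to-one claim, the proof does not close.
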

%\begin{proof}
%	Let $g_i(x)=\sum^{n+1}_{j=1}e_jx_j-1$ be the equation of a hyperplane $\rho$ (not necessarily a face of $\Delta$) consists of $n+1$ linearly independent nonzero vertices of $\Delta$. Then each $g_i(x)$ contains at least one $V_i$ for $n+1\leq i\leq n+r+1$, which implies that $e_{n+1}\geq 1$ and $g_j(V_{n+1})\geq 0$.  
%	If $g_i(V_{n+1})=0$, $V_{n+1}$ lies on $\rho$ as a vertex. If $g_i(V_{n+1})>0$, $\rho$ is not a face of $\Delta$, since it contradicts with proposition \ref{prop20}.
%\end{proof}
Then we can describe some detailed properties for $\delta_i$.
\begin{Prop}\label{eqv}
	Let $\Delta_i$ be the polytope generated by the origin and $\delta_i$.
	Denote $\mathcal{V}=\{V_1,V_2,\cdots,V_n\}$. Every $\Delta_i$ has $n+2$ vertices:
	\begin{align*}
	V_0, V_{n+1},\quad \underbrace{\mathcal{V}-\{V_{i_1}, \cdots, V_{i_k}\}}_{n-k\ \text{vertices}}\quad \text{and} \quad
	\underbrace{\{{V}_{n+1+j_1}, \ldots, V_{n+1+j_k}\}}_{k\ \text{vertices}},
	\end{align*}
	where $0\leq k\leq r$ and $N_{j_l-1}+1 \leq i_l \leq N_{j_l}$, i.e., there is a one-to-one correspondence between vertices $V_{i_l}$ and $V_{n+1+j_l}$ for $1\leq l \leq k$.
	Note that for $l_1\neq l_2$, we have $j_{l_1}\neq j_{l_2}$.
\end{Prop}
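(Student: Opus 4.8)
The plan is to analyze the co-dimension 1 faces $\delta_i$ by working with their defining equations $h_i(x) = \sum_{j=1}^{n+1} e_j x_j - 1 = 0$ and using the constraint from Proposition~\ref{prop20} that $h_i(V) \le 0$ for all vertices $V$ of $\Delta$, together with the fact (from the preceding proposition) that $V_{n+1}$ always lies on $\delta_i$. First I would plug in the known vertices to extract information on the coefficients $e_j$. Since $h_i(V_{n+1}) = 0$ we get $e_{n+1} = 1$. Since $h_i(V_0) = -1 \le 0$ automatically. For each $V_j$ with $1 \le j \le n$, we have $h_i(V_j) = e_j - 1 \le 0$, so $e_j \le 1$; the vertex $V_j$ lies on $\delta_i$ exactly when $e_j = 1$. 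For the vertices $V_{n+1+t}$ ($1 \le t \le r$), writing out $h_i(V_{n+1+t}) = -\sum_{l=N_{t-1}+1}^{N_t} e_l b_l + e_{n+1} - 1 = -\sum_{l} e_l b_l \le 0$, i.e. $\sum_{l=N_{t-1}+1}^{N_t} e_l b_l \ge 0$, with equality iff $V_{n+1+t}$ lies on $\delta_i$.

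The next step is a dimension count. The face $\delta_i$ has dimension $n$, so $\Delta_i = \mathrm{cone}(0, \delta_i)$ has dimension $n+1$ and is spanned by $V_0$, $V_{n+1}$, plus $n$ further vertices lying on $\delta_i$, which must be chosen from $\{V_1,\dots,V_n\} \cup \{V_{n+2},\dots,V_{n+r+1}\}$. So I need to show exactly which subsets of size $n$ can occur: namely that the vertices on $\delta_i$ consist of all but $k$ of the $V_j$ ($1 \le j \le n$), together with $k$ of the $V_{n+1+t}$, for some $0 \le k \le r$, and that the omitted $V_{i_l}$ and the included $V_{n+1+j_l}$ are ``block-matched'' (each $i_l$ lies in the block $[N_{j_l-1}+1, N_{j_l}]$ indexed by $j_l$, with distinct blocks). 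The block-matching is the crux: suppose $V_{n+1+t}$ lies on $\delta_i$, so $\sum_{l \in \text{block } t} e_l b_l = 0$; since each $e_l \le 1$ and $b_l > 0$, if all $e_l = 1$ on that block the sum is $\sum b_l > 0$, a contradiction, so at least one $e_l < 1$ in block $t$, i.e. at least one $V_l$ with $l$ in block $t$ is omitted from $\delta_i$. This pairs each included ``denominator vertex'' $V_{n+1+t}$ with an omitted $V_{i_l}$ in the same block. Conversely, I would argue that each block can contribute at most one omitted $V_l$ and at most one included $V_{n+1+t}$: this should follow from the rank/dimension constraint — the affine span of the chosen vertices must be $n$-dimensional — combined with the structure of which coordinates the vertices populate, so one verifies that having two omitted vertices in the same block (or two included denominator vertices from distinct blocks paired to the same omitted coordinate) would force a deficiency in the rank of the vertex matrix, contradicting $\dim \delta_i = n$.

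Concretely, I expect the cleanest route is: (1) show $e_{n+1} = 1$ and $e_j \le 1$ for $1 \le j \le n$ as above; (2) let $k$ be the number of denominator vertices $V_{n+1+t}$ on $\delta_i$ and show $\delta_i$ then omits exactly $k$ of the $V_j$'s, via the dimension count ($2 + (n-k)_{\text{coordinate vertices}} + k_{\text{denominator vertices}} = n+2$ vertices of $\Delta_i$); (3) use the relation $\sum_{l \in \text{block}} e_l b_l = 0$ on each block hosting an included denominator vertex to produce, in that block, at least one omitted $V_l$; (4) a counting/pigeonhole argument comparing the number of omitted $V_j$'s ($=k$) with the number of blocks forced to host at least one omission ($\ge$ number of distinct blocks of the included denominator vertices) to conclude the correspondence is one-to-one and the blocks are distinct; (5) solve the resulting linear system for the $e_j$ to confirm consistency (this pins down $e_l$ on a block with an included denominator vertex and an omitted $V_{i_l}$, and shows no further vertices can lie on $\delta_i$). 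The main obstacle I anticipate is step (4), establishing that no block can host two omitted coordinate vertices while still keeping $\delta_i$ a genuine facet — this requires showing the vertex configuration would then be degenerate (affine rank $< n$), which means carefully examining the linear relations among $V_0, V_{n+1}$, the surviving $V_j$'s, and the included $V_{n+1+t}$'s; a secondary technical point is verifying that every such combinatorial choice actually does arise from a genuine co-dimension 1 face (the converse direction), though this is less delicate since one can exhibit the defining equation explicitly.
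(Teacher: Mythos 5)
The paper states this proposition without proof, so there is no internal argument to compare against; judged on its own, your plan is correct, and it is exactly the argument the surrounding setup (Proposition \ref{prop20} and the fact that $V_{n+1}\in\delta_i$) invites. Your steps (1)--(3) are right as written, and the step you flag as the main obstacle, (4), does go through with a one-line span computation rather than a delicate degeneracy analysis. Let $A=\{\,j\in\{1,\dots,n\}: V_j\in\delta_i\,\}$, $B=\{\,t\in\{1,\dots,r\}: V_{n+1+t}\in\delta_i\,\}$, and $\beta_t=\{N_{t-1}+1,\dots,N_t\}$. The affine hull of $\delta_i$ is the hyperplane $\sum_j e_jx_j=1$, which misses the origin, so the vertices of $\delta_i$ must span a linear subspace of dimension $n+1$. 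Since $V_{n+1+t}=V_{n+1}-\sum_{m\in\beta_t}b_mV_m$, working modulo $\mathrm{span}(\{V_{n+1}\}\cup\{V_j: j\in A\})$ each $V_{n+1+t}$ with $t\in B$ reduces to $-\sum_{m\in\beta_t\setminus A}b_mV_m$, and these reductions have pairwise disjoint supports; hence the span of the face's vertices has dimension $1+|A|+\#\{t\in B:\beta_t\not\subseteq A\}$. Setting this equal to $n+1$ and invoking your step (3) (every $t\in B$ has $\beta_t\not\subseteq A$) gives $|B|=n-|A|=|A^c|$ with $A^c=\{1,\dots,n\}\setminus A$; the sets $\beta_t\cap A^c$ ($t\in B$) are nonempty, pairwise disjoint, and $|A^c|$ in number, so each is a singleton and they exhaust $A^c$. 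This is precisely the block-matched one-to-one correspondence with distinct $j_l$, it rules out both two omissions in one block and an omission in a block whose denominator vertex is absent, and it yields exactly $1+|A|+|B|=n+1$ vertices on $\delta_i$, hence $n+2$ for $\Delta_i$. Two small remarks: your step (2) as phrased presupposes the conclusion, but it is subsumed by the span count just given; and your step (5)/converse (every such combinatorial choice is realized, via the explicit equation of Proposition \ref{prop22}) is correct but not needed for this particular statement, which only asserts that every $\Delta_i$ has the stated form.
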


\begin{Prop}\label{prop22}
	The equation of $\delta_i$ is given by 
	\begin{align*}
	\sum_{i=1}^{n+1}c_ix_i=1,
	\end{align*}
	where
	\begin{equation*}
	c_i=
	\begin{cases}
	1 \ \ &\text{if} \ \  i \neq i_l,\\
	\displaystyle-\frac{1}{b_{i_l}}\left(\sum_{m=N_{j_l-1}+1}^{N_{j_l}}b_m-b_{i_l} \right) &\text{if} \ \  i = i_l.
	\end{cases}
	\end{equation*}
	Note that  $b_{i_l}$ is the $i_l$-th coordinate of $V_{n+1+j_l}$. 
\end{Prop}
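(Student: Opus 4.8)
\textbf{Proof proposal for Proposition \ref{prop22}.}

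The plan is to use Proposition \ref{eqv} together with Proposition \ref{prop20} to pin down the coefficients $c_i$ in the unique affine equation of $\delta_i$. By Proposition \ref{eqv}, the face $\Delta_i$ is generated by the origin together with the $n+1$ nonzero vertices
$$V_{n+1},\quad \mathcal{V}\setminus\{V_{i_1},\ldots,V_{i_k}\},\quad \{V_{n+1+j_1},\ldots,V_{n+1+j_k}\},$$
so $\delta_i$ is the hyperplane through these $n+1$ points, and the equation $\sum_{i=1}^{n+1}c_ix_i=1$ is determined by requiring it to vanish (equal $1$) on each of them. First I would plug in the vertices $V_m$ with $m\in\{1,\ldots,n\}\setminus\{i_1,\ldots,i_k\}$: since $V_m$ is the standard basis vector $\mathbf{e}_m$, the equation forces $c_m=1$, giving the first case. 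Then I would plug in $V_{n+1}=\mathbf{e}_{n+1}$, which forces $c_{n+1}=1$. It remains to determine $c_{i_l}$ for $1\leq l\leq k$.

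For each $l$, I would use the vertex $V_{n+1+j_l}$, whose coordinates are $-b_m$ in positions $m=N_{j_l-1}+1,\ldots,N_{j_l}$ and $1$ in position $n+1$, zero elsewhere. Substituting into $\sum_i c_ix_i=1$ and using the values of $c_i$ already found (namely $c_m=1$ for all indices $m$ in the block $N_{j_l-1}+1,\ldots,N_{j_l}$ except $m=i_l$, and $c_{n+1}=1$) yields
$$-\!\!\sum_{m=N_{j_l-1}+1}^{N_{j_l}}\!\!b_m\,c_m + 1 = 1,$$
i.e. $c_{i_l}b_{i_l} + \sum_{m\neq i_l} b_m = 0$ where the sum runs over the block, hence
$$c_{i_l}=-\frac{1}{b_{i_l}}\left(\sum_{m=N_{j_l-1}+1}^{N_{j_l}}b_m - b_{i_l}\right),$$
which is exactly the claimed second case. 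Here I am using the one-to-one correspondence from Proposition \ref{eqv}: the index $j_l$ attached to the omitted vertex $V_{i_l}$ is precisely the block in which $i_l$ lies, so $N_{j_l-1}+1\leq i_l\leq N_{j_l}$, and each block contributes at most one such pair, so the substitutions for distinct $l$ involve disjoint sets of coordinates and are consistent. Finally I would remark that $b_{i_l}$ is indeed the $i_l$-th coordinate of $V_{n+1+j_l}$ up to sign, matching the note.

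The only genuine point requiring care — and the step I expect to be the mild obstacle — is verifying that the $n+1$ vertices listed are affinely independent, so that the hyperplane (and hence the coefficient vector) is truly unique and the above substitutions determine all $c_i$ without overdetermination; this follows from $\dim\delta_i=n$ (co-dimension $1$ in the $(n+1)$-dimensional $\Delta$) together with Proposition \ref{eqv}'s count of exactly $n+2$ vertices for $\Delta_i$, one of which is the origin. One should also check $c_{i_l}\neq 0$ to ensure $\delta_i$ genuinely depends on $x_{i_l}$: since $n_{j_l}\geq 1$ and if the block has size $1$ then $i_l$ is the only index, forcing $k$ to behave accordingly — but in the case $n_{j_l}=1$ the vertex $V_{i_l}$ cannot be omitted while keeping $\delta_i$ a genuine face, so in fact $c_{i_l}<0$ strictly whenever the omission occurs; this is consistent with Proposition \ref{prop20}, which guarantees $h_i(V)\leq 0$ for every vertex $V$ and in particular rules out degenerate configurations.
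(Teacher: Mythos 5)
Your core computation is correct and is essentially the verification the paper leaves implicit: by Proposition \ref{eqv} the face $\delta_i$ has exactly the $n+1$ listed nonzero vertices, substituting the standard basis vectors $V_m$ with $m\notin\{i_1,\ldots,i_k\}$ and $V_{n+1}$ forces $c_m=c_{n+1}=1$, and then each $V_{n+1+j_l}$ determines $c_{i_l}$ because the blocks $j_1,\ldots,j_k$ are pairwise distinct, so the linear conditions are triangular; uniqueness of the normalized equation is automatic since $\delta_i$ is an $n$-dimensional face whose affine span does not contain the origin.

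Your closing paragraph, however, contains a false (though harmless) claim. When $n_{j_l}=1$ the vertex $V_{i_l}$ \emph{can} be omitted while $\delta_i$ remains a genuine codimension-one face: for instance with $n=r=n_1=1$ and $b_1=b$, the polytope $\Delta\subset\R^2$ has vertices $(0,0),(1,0),(0,1),(-b,1)$, and the edge joining $(0,1)$ and $(-b,1)$ is a face not containing the origin with equation $x_2=1$, i.e.\ $c_1=0$. In that situation the stated formula gives $c_{i_l}=-\frac{1}{b_{i_l}}(b_{i_l}-b_{i_l})=0$, which is exactly what the proposition asserts; no strict inequality $c_{i_l}<0$ holds or is required, and Proposition \ref{prop20} does not exclude this configuration. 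The proposition never claims $c_{i_l}\neq0$ (a zero coefficient simply means the hyperplane does not involve $x_{i_l}$), and the face count in Proposition \ref{prop21} in fact relies on all such omissions being admissible, so you should drop that final argument rather than try to repair it.
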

\begin{Prop}\label{prop21}
	There are $\prod^{r}_{i=1}\left(1+n_i\right)$ co-dimension 1 faces of $\Delta$ not containing the origin, i.e.,
	\begin{align*}
	\#\{\delta_i\ \rvert\ \delta_i\ \text{face of}\ \Delta, 0\notin\delta_i\}=\prod^{r}_{i=1}\left(1+n_i\right).
	\end{align*}
\end{Prop}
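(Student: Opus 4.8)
The plan is to upgrade the structural description of Propositions~\ref{eqv} and~\ref{prop22} into a genuine bijection between the co-dimension $1$ faces of $\Delta$ not containing the origin and the combinatorial data that indexes them, and then simply to count that data.

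First I would record the injectivity. By Proposition~\ref{eqv}, every co-dimension $1$ face $\delta_i$ with $0\notin\delta_i$ is carried by a subset $J=\{j_1,\dots,j_k\}\subseteq\{1,\dots,r\}$ with $0\le k\le r$, together with one index $i_l$ with $N_{j_l-1}+1\le i_l\le N_{j_l}$ for each $j_l\in J$; its vertex set is $\{V_{n+1}\}\cup(\mathcal V\setminus\{V_{i_1},\dots,V_{i_k}\})\cup\{V_{n+1+j_1},\dots,V_{n+1+j_k}\}$, which has $1+(n-k)+k=n+1$ elements. Since a face of a polytope is determined by its vertex set, and from the vertex set above one reads off $k$ (the number of vertices of the form $V_{n+1+j}$), then $J$ (the set of such indices $j$), then each $i_l$ (the unique index of $\mathcal V$ missing from the block $\{N_{j_l-1}+1,\dots,N_{j_l}\}$, using the one-to-one correspondence of Proposition~\ref{eqv}), the assignment $\delta_i\mapsto(J,\{i_l\})$ is injective.

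Next I would prove surjectivity: every such datum $(J,\{i_l\})$ is realized by an actual co-dimension $1$ face. Given the datum, form the affine hyperplane $\sum_{i=1}^{n+1}c_ix_i=1$ with the $c_i$ of Proposition~\ref{prop22}; note $c_{i_l}\le 0$ because the $b_m$ are positive, so $\sum_{m=N_{j_l-1}+1}^{N_{j_l}}b_m\ge b_{i_l}$. Substituting each of the $n+r+2$ vertices $V_0,\dots,V_{n+r+1}$ into $\sum_{i=1}^{n+1}c_ix_i$ one checks the value is $\le 1$ with equality precisely at the $n+1$ vertices listed above — this is exactly the inequality $h_i(V)\le0$ of Proposition~\ref{prop20}. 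Hence the hyperplane supports $\Delta$ and cuts out the face spanned by those $n+1$ vertices; since they are affinely independent (so the face is $n$-dimensional, i.e. of co-dimension $1$ in the $(n+1)$-dimensional $\Delta$) and the constant term of the equation is $1$ (so $V_0=0$ is not on it), this face is a genuine co-dimension $1$ face not containing the origin mapping to $(J,\{i_l\})$.

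Finally, with the bijection in hand the count is immediate: choosing the data amounts to deciding, independently for each block $i\in\{1,\dots,r\}$, either to leave $i$ out of $J$, or to put it in together with one of the $n_i$ indices in $\{N_{i-1}+1,\dots,N_i\}$, giving $1+n_i$ options per block; equivalently $\sum_{k=0}^r\sum_{\{j_1<\dots<j_k\}}\prod_{l=1}^k n_{j_l}=\prod_{i=1}^r(1+n_i)$. The only step with any real content is the surjectivity verification — plugging the $n+r+2$ explicit vertices into the equation of Proposition~\ref{prop22} and checking the equality set is exactly the claimed $n+1$ vertices; the injectivity and the enumeration are bookkeeping.
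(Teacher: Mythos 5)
Your proposal is correct and follows essentially the same route as the paper: both count the facets via the block data of Proposition \ref{eqv}, with $1+n_i$ choices per block giving $\prod_{i=1}^{r}(1+n_i)$. The only difference is that you explicitly verify that every datum $(J,\{i_l\})$ is realized by an actual facet (by checking $h_i(V)\leq 1$ with equality exactly on the claimed $n+1$ affinely independent vertices), a step the paper's proof leaves implicit in Propositions \ref{prop20}--\ref{prop22}.
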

\begin{proof}
	Recall the selection of vertices in proposition \ref{eqv}. For any fixed $k_0$ ($1\leq k_0\leq r$), we have 
	\begin{align*}
		\#\{\delta_i\ \rvert\ \delta_i\ \text{face of}\ \Delta, 0\notin\delta_i, k=k_0\} = \sum_{i_1\neq i_2\neq\cdots\neq i_{k_0}} n_{i_1}\cdots n_{i_{k_0}}.
	\end{align*}
	Note that $i_i,\cdots,i_k$ lie in $k$ distinct intervals $[N_{j_l-1}+1, N_{j_l}]$. Then we have
	\begin{align*}
	\#\{\delta_i\ \rvert\ \delta_i\ \text{face of}\ \Delta, 0\notin\delta_i\} = 1+\sum_{i=1}^r n_i+\sum_{i\neq j} n_in_j+\ldots + n_1n_2\cdots n_r
	=\prod^{r}_{i=1}\left(1+n_i\right).
	\end{align*}
\end{proof}
Another way to prove proposition \ref{prop21} leads to the following result in combinatorics.
\begin{Cor}
	Let $n$ and $r$ be positive integers. Consider a partition 
	$$n=n_1+n_2+\cdots+n_r$$
	of $n$ as the sum of $r$ integers $n_i$, we have
	\begin{align*}
	\binom{n+r}{n}-\sum^{r}_{i=1}\binom{n+r-n_i-1}{n-n_i-1}=\prod^{r}_{i=1}\left(1+n_i\right).
	\end{align*}
\end{Cor}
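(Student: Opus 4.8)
The statement to prove is the combinatorial identity
\[
\binom{n+r}{n}-\sum^{r}_{i=1}\binom{n+r-n_i-1}{n-n_i-1}=\prod^{r}_{i=1}\left(1+n_i\right),
\]
given a partition $n=n_1+\cdots+n_r$ with each $n_i\geq 1$.

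The plan is to get this identity \emph{for free} by counting the same objects two ways, exactly as the corollary's phrasing ("another way to prove Proposition \ref{prop21}") suggests. Proposition \ref{prop21} already establishes that the number of co-dimension $1$ faces of $\Delta$ not containing the origin equals $\prod_{i=1}^r(1+n_i)$, by the direct combinatorial count over the choices of matched pairs. So the first step is to produce a \emph{second} enumeration of these faces that yields the left-hand side. From Proposition \ref{eqv}, each such face $\delta$ corresponds to a subset of the vertex set $\{V_0,V_1,\dots,V_{n+r+1}\}\setminus\{V_0\}$ of size $n+1$ spanning a hyperplane not through the origin; more precisely the affine hull of $\delta$ is determined by which $n+1$ of the $n+r+1$ non-origin vertices it contains, and $\delta$ always contains $V_{n+1}$ (by the proposition following Proposition \ref{prop20}). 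Ignoring for a moment the matching constraint, the relevant ambient count of $(n+1)$-subsets containing $V_{n+1}$ from the $n+r+1$ available vertices is $\binom{n+r}{n}$ — this should be the source of the binomial coefficient $\binom{n+r}{n}$ on the left. I would then identify precisely which of these $\binom{n+r}{n}$ candidate subsets fail to be actual faces (i.e., fail to span a supporting hyperplane with all vertices of $\Delta$ on one side, per Proposition \ref{prop20}), and show these bad subsets are counted by $\sum_{i=1}^r\binom{n+r-n_i-1}{n-n_i-1}$.

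Concretely, the key steps in order: (1) Set up the bijection between co-dimension $1$ faces $\delta$ not containing the origin and $(n+1)$-element subsets $S\subset\{V_1,\dots,V_{n+r+1}\}$ with $V_{n+1}\in S$ that are \emph{affinely independent} and whose spanning hyperplane $\sum c_i x_i=1$ satisfies $h(V)\le 0$ for every vertex $V$ of $\Delta$ — using Propositions \ref{prop20}, \ref{eqv}, \ref{prop22}. (2) Observe that the total number of $(n+1)$-subsets containing $V_{n+1}$ is $\binom{n+r}{n}$, since we choose the remaining $n$ vertices from the $n+r$ vertices $\{V_1,\dots,V_n,V_{n+2},\dots,V_{n+r+1}\}$. (3) Characterize the "bad" subsets. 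A subset is bad precisely when, for some block index $i$, it contains the "block vertex" $V_{n+1+i}=(0,\dots,-b_{N_{i-1}+1},\dots,-b_{N_i},\dots,1)$ but contains \emph{all} $n_i$ of the coordinate vertices $V_{N_{i-1}+1},\dots,V_{N_i}$ of that block (so the matching of Proposition \ref{eqv} cannot be realized — there is no coordinate vertex left to "delete" and pair with $V_{n+1+i}$), or some dual degeneracy; one checks via Proposition \ref{prop22} that such a configuration does not give a valid face. (4) Count the bad subsets: for a fixed block $i$, requiring $V_{n+1+i}$ and all of $V_{N_{i-1}+1},\dots,V_{N_i}$ (that is $1+n_i$ forced vertices, one of which may or may not coincide with the forced $V_{n+1}$ — handle the bookkeeping) forces us to pick the remaining vertices from the others, giving $\binom{n+r-n_i-1}{n-n_i-1}$; then argue these events are mutually exclusive across distinct $i$ (two blocks cannot both be "over-full" in an $(n+1)$-subset because $\sum(1+n_i)$ for two blocks already exceeds the budget — this uses $n_i\ge 1$ and that $n+1$ is the size). (5) Subtract: $\#\{\text{faces}\}=\binom{n+r}{n}-\sum_{i=1}^r\binom{n+r-n_i-1}{n-n_i-1}$, and equate with $\prod_{i=1}^r(1+n_i)$ from Proposition \ref{prop21}.

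The main obstacle I anticipate is step (3)–(4): pinning down \emph{exactly} which $(n+1)$-subsets of vertices fail to be faces, and proving the failure condition is equivalent to a clean block-overfull condition, rather than something messier. One has to rule out other sources of degeneracy (affine dependence among the chosen vertices that isn't of the block-overfull type, or a supporting-hyperplane violation $h(V)>0$ for some $V$), and confirm that the block-overfull subsets are the only ones excluded. The mutual-exclusivity argument in step (4) is a quick pigeonhole once the characterization is in place — if blocks $i\ne i'$ were both overfull, the subset would contain at least $(1+n_i)+(1+n_{i'})\ge 4$ vertices beyond nothing, but more carefully $n_i+n_{i'}+2 \le n+1$ forces $n_i+n_{i'}\le n-1$, which is consistent, so one actually needs to also use that the forced coordinate-vertices across the two blocks are disjoint and together with $V_{n+1}$ already total $n_i+n_{i'}+1$, while overfullness in \emph{both} would additionally force both $V_{n+1+i}$ and $V_{n+1+i'}$, bringing the count to $n_i+n_{i'}+3$, and one checks this can still be $\le n+1$ only if $n_i+n_{i'}\le n-2$ — so in fact the events are \emph{not} automatically disjoint and one must instead invoke inclusion–exclusion or, more cleanly, directly verify that an $(n+1)$-subset can be "completed to a face datum" in the sense of Proposition \ref{eqv} unless it is overfull in at least one block, and that overfull-in-two-blocks subsets still get subtracted the right number of times because Proposition \ref{prop21}'s answer forces the identity. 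The safest route, and the one I would actually write, is: prove the bijection of step (1) rigorously, note the answer $\prod(1+n_i)$ is already known from Proposition \ref{prop21}, and then \emph{deduce} the identity — i.e., treat Proposition \ref{prop21} as the hard combinatorial input and the corollary as a formal consequence of having two expressions (one geometric-via-Proposition \ref{eqv}-vertex-selection, one direct) for the same quantity.
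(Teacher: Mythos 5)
Your route is the same double count as the paper's: the product count of Proposition \ref{prop21} on one side, and on the other side a count of the co-dimension~$1$ faces as $(n+1)$-element linearly independent vertex subsets containing $V_{n+1}$, obtained from all $\binom{n+r}{n}$ such subsets by removing the ``overfull'' ones. But the difficulty you flag in steps (3)--(4) is not a bookkeeping nuisance you can defer --- it is fatal to the plain subtraction. Two distinct blocks $i\neq j$ can both be overfull in a single $(n+1)$-subset exactly when $n_i+n_j\le n-2$, which happens as soon as $r\ge 3$ (apart from tiny cases), and then the sum $\sum_i\binom{n+r-n_i-1}{n-n_i-1}$ over-subtracts. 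The smallest instance already defeats the stated identity: for $(n_1,n_2,n_3)=(1,1,2)$, $n=4$, $r=3$, the left side is $\binom{7}{4}-\binom{5}{2}-\binom{5}{2}-\binom{4}{1}=11$, while $\prod_i(1+n_i)=12$; the unique subset $\{V_1,V_2,V_{n+1},V_{n+2},V_{n+3}\}$ is overfull in blocks $1$ and $2$ simultaneously and gets subtracted twice. (For $r\le 2$ --- in particular the motivating case $n=4$, $n_1=n_2=2$ --- no subset can be overfull twice and the identity does hold.)

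Because of this, your proposed ``safest route'' is circular: you would deduce the identity from the fact that both expressions count the same faces, but the left-hand expression counts the faces only if the overfull events are disjoint, which is precisely the unproven point and is false in general. A correct second count must either carry out full inclusion--exclusion over sets $S$ of blocks, which yields
\begin{equation*}
\sum_{S\subseteq\{1,\dots,r\}}(-1)^{|S|}\binom{n+r-\sum_{i\in S}(n_i+1)}{\,n-\sum_{i\in S}(n_i+1)\,}=\prod_{i=1}^{r}(1+n_i),
\end{equation*}
or argue directly: an $(n+1)$-subset containing $V_{n+1}$ is linearly independent iff its $r$-element complement inside $\{V_1,\dots,V_n,V_{n+2},\dots,V_{n+r+1}\}$ meets each of the $r$ disjoint sets $B_i=\{V_{N_{i-1}+1},\dots,V_{N_i},V_{n+1+i}\}$, hence meets each exactly once, giving $\prod_i(1+n_i)$ immediately. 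Note also that the paper's own proof performs the same uncorrected subtraction (it implicitly assumes every dependent subset contains exactly one full block configuration), so the non-disjointness you noticed is a genuine gap there as well: as stated, the corollary needs either the inclusion--exclusion correction terms or an extra hypothesis ruling out two blocks with $n_i+n_j\le n-2$.
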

\begin{proof}
	To count the number of $\delta_i$, it suffices to consider the combination of $n+1$ linearly independent vertices. Note that all the linearly dependent combinations of $V_j (1\leq j\leq n+r+1)$ contain vertices 
	\begin{align*}
	\underbrace{V_{N_{i-1}+1}, V_{N_{i-1}+2}, \cdots, V_{N_i}}_{n_i}, V_{n+1}, V_{n+i+1},
	\end{align*}
	Since $V_{n+1}$ is a fixed vertex of all $\delta_i$, we have
	\begin{align*}
	\#\{\delta_i\ \rvert\ \delta_i\ \text{face of}\ \Delta, 0\notin\delta_i\} = \binom{n+r}{n}-\sum^{r}_{i=1}\binom{n+r-n_i-1}{n-n_i-1}.
	\end{align*}
\end{proof}
 As a consequence of proposition \ref{prop22}, we have the following result.
\begin{Prop}\label{th5}
	\begin{enumerate}[(i).]
		\item The denominator $D=\mathrm{lcm}\{b_{ij}\ \rvert\ 1\leq i\leq r, 1\leq j\leq n_i\}$.
		\item $f$ is non-degenerate.
		\item $\displaystyle
		\Vol(\Delta)=\frac{1}{(n+1)!}\prod^{r}_{i=1}\left(1+\sum_{j=1}^{n_i}b_{ij}\right)$.
	\end{enumerate}
\end{Prop}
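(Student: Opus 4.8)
The plan is to establish the three assertions of Proposition \ref{th5} by exploiting the explicit description of the co-dimension 1 faces $\delta_i$ obtained in Propositions \ref{eqv} and \ref{prop22}. These three statements are essentially consequences of having the defining equations of all the relevant faces in hand, together with the non-degeneracy criterion for diagonal simplices (Proposition \ref{propc1}) applied face by face via the facial decomposition theorem.

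For part (i), I would recall that by definition $D(\Delta) = \mathrm{lcm}_{\delta_i} D(\delta_i)$, where $\delta_i$ runs over all co-dimension 1 faces not containing the origin, and $D(\delta_i)$ is the lcm of the denominators of the coefficients in the normalized equation of $\delta_i$. Proposition \ref{prop22} gives these coefficients explicitly: they are $1$ (denominator $1$) except at positions $i_l$, where the coefficient is $-\frac{1}{b_{i_l}}\left(\sum_{m=N_{j_l-1}+1}^{N_{j_l}} b_m - b_{i_l}\right)$. The denominator of this rational number divides $b_{i_l}$; conversely, taking the face corresponding to $k=1$ with the single exchanged pair $(i_l, j_l)$, one checks that (generically, i.e. whenever $\sum_m b_m - b_{i_l}$ is coprime to $b_{i_l}$, or more carefully by taking the lcm over all such faces) the denominators collectively generate $\mathrm{lcm}\{b_{ij}\}$. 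I would be slightly careful here: the denominator of a single coefficient $\frac{1}{b_{i_l}}(\text{integer})$ need not be exactly $b_{i_l}$, but since for each fixed index pair we may choose $i_l$ to be \emph{any} index in its block, and the partial sums vary, the lcm of all the arising denominators is exactly $\mathrm{lcm}\{b_{ij}\}$. Actually the cleanest route is: every denominator divides some $b_{ij}$, so $D \mid \mathrm{lcm}\{b_{ij}\}$; and for the face where block $j_l$ has $i_l$ chosen so that only one variable in that block is ``active'', the coefficient at $i_l$ has denominator exactly $b_{i_l}$ when $p \nmid b_{i_l}$ and the numerator $\sum_{m} b_m - b_{i_l}$ shares no factor — but to avoid case analysis I would instead observe that $D(\delta_i)$ for the face exchanging only the pair in block $i$ with all $n_i$ variables of that block contributing gives denominator dividing $b_{i_l}$, and running over all choices recovers each $b_{ij}$ up to the gcd issue; the safe statement ``$D = \mathrm{lcm}\{b_{ij}\}$'' then follows by a direct gcd computation which I will include.

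For part (ii), non-degeneracy, I would invoke the facial decomposition theorem (Theorem \ref{thm9}) together with Proposition \ref{propc1}: it suffices to check that each $f^{\delta_i}$ is non-degenerate, and since by Proposition \ref{eqv} each $\Delta_i$ is an $(n+1)$-dimensional simplex (it has exactly $n+2$ vertices including the origin), $f^{\Delta_i}$ is diagonal, so non-degeneracy is equivalent to $p \nmid \det M(\Delta_i)$. Using the vertex description from Proposition \ref{eqv}, the vertex matrix is block-triangular up to permutation: the columns coming from $\mathcal{V} \setminus \{V_{i_l}\}$ are standard basis vectors, the column $V_{n+1}$ contributes the last coordinate, and each exchanged column $V_{n+1+j_l}$ contributes $-b_{i_l}$ in the $i_l$-slot plus a $1$ in the last slot. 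Expanding the determinant, I expect $|\det M(\Delta_i)| = \prod_l b_{i_l}$ (times $1$), which divides $\prod_{i,j} b_{ij}$, hence is prime to $p$ by the standing hypothesis $p \nmid b_{ij}$. So $f^{\delta_i}$ is non-degenerate for every $i$, and non-degeneracy of $f$ follows. (Strictly, Proposition \ref{propc1} is about non-degeneracy of a diagonal $f$ whose Newton polyhedron is the simplex; I would note that $f^{\delta_i}$ restricted to $\Delta_i$ is such, and the criterion applies to $f^{\delta_i}$ directly since non-degeneracy of $f$ is checked on faces of $\Delta(f)$ and each such face of $\Delta_i$ not through the origin is a face of $\delta_i$ hence of $\Delta$.)

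For part (iii), the volume, I would compute $\Vol(\Delta)$ by triangulating $\Delta$ over the origin into the simplices $\Delta_i$, one per co-dimension 1 face $\delta_i$ not containing the origin; these have disjoint interiors and cover $\Delta$. Then $\Vol(\Delta) = \sum_i \Vol(\Delta_i) = \frac{1}{(n+1)!}\sum_i |\det M(\Delta_i)| = \frac{1}{(n+1)!}\sum_i \prod_{l} b_{i_l}$, where the sum runs over all face-types parametrized as in Proposition \ref{eqv}. Grouping by which blocks are ``exchanged'': a choice of $k$ blocks $j_1 < \dots < j_k$ and, within block $j_l$, an index $i_l$, contributes $\prod_l b_{i_l}$; summing over the index choices within each selected block gives $\prod_l\left(\sum_{j} b_{N_{j_l-1}+j}\right)$, and then summing over all subsets of blocks (including the empty set, contributing $1$, the simplex with no exchange) yields exactly $\prod_{i=1}^r\left(1 + \sum_{j=1}^{n_i} b_{ij}\right)$. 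Hence $\Vol(\Delta) = \frac{1}{(n+1)!}\prod_{i=1}^r\left(1+\sum_{j=1}^{n_i} b_{ij}\right)$, which also matches $n!\,\Vol(\Delta)$ divided appropriately and is consistent with the degree $d-r$ claimed in Theorem \ref{thm1} after accounting for the extra variable $x_{n+1}$ and the trivial factors.

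The main obstacle I anticipate is part (i): pinning down that the lcm of the denominators $D(\delta_i)$ is \emph{exactly} $\mathrm{lcm}\{b_{ij}\}$ rather than merely a divisor of it, because the coefficient $-\frac{1}{b_{i_l}}\left(\sum_m b_m - b_{i_l}\right)$ has a denominator that is $b_{i_l}/\gcd(b_{i_l}, \sum_m b_m - b_{i_l})$, which could a priori be a proper divisor of $b_{i_l}$ for every available choice of face. Resolving this requires the observation that for each index $i_0$ one can find a face whose $D(\delta_i)$ is divisible by the full $p$-part-free part of $b_{i_0}$ — concretely, by choosing within the block containing $i_0$ the exchanged index to be $i_0$ itself and noting that as one varies over subsets of the other blocks the numerators $\sum_m b_m - b_{i_0}$ range over a coset structure forcing the gcd with $b_{i_0}$ down to $1$ for at least one choice; alternatively, and more robustly, one shows that NP$(f)$-relevant denominator is actually governed by the vertex matrix determinants $\prod_l b_{i_l}$, whose lcm over all faces is visibly $\mathrm{lcm}\{b_{ij}\}$, and that $D(\Delta)$ agrees with this. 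The other two parts are essentially bookkeeping once the vertex matrices are written down, with part (iii) being a clean generating-function-style expansion.
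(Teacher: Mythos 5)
Your parts (ii) and (iii) are correct and follow essentially the paper's own route: facial decomposition (Theorem \ref{thm9}) plus the diagonal criterion of Proposition \ref{propc1}, using that each $\Delta_i$ is an $(n+1)$-simplex with $|\det M(\Delta_i)|=\prod_l b_{i_l}$ prime to $p$; and for the volume, $\Vol(\Delta)=\sum_i\Vol(\Delta_i)=\frac{1}{(n+1)!}\sum_i\prod_l b_{i_l}=\frac{1}{(n+1)!}\prod_{i=1}^r\bigl(1+\sum_{j=1}^{n_i}b_{ij}\bigr)$ via formula (\ref{eq2}), where your block-by-block expansion is exactly the computation the paper leaves implicit. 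You in fact supply more detail than the printed proof.

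The obstacle you single out in (i) is, however, a genuine gap, and neither of your proposed repairs closes it, because the equality $D(\Delta)=\mathrm{lcm}\{b_{ij}\}$ can actually fail. Take $r=1$, $n_1=2$, $b_{11}=b_{12}=2$, so $f=x_1+x_2+x_3\left(a_1x_1^{-2}x_2^{-2}-1\right)$. The three co-dimension $1$ faces of $\Delta$ not through the origin are $x_1+x_2+x_3=1$, $x_1-x_2+x_3=1$ and $-x_1+x_2+x_3=1$ (the coefficient $-\frac{1}{b_{i_l}}(\sum_m b_m-b_{i_l})=-1$ here), all with integer coefficients, so the true denominator is $D(\Delta)=1$ while $\mathrm{lcm}\{b_{ij}\}=2$. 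This also refutes both escape routes: with $r=1$ there are no other blocks to vary, so no choice of face forces $\gcd\bigl(b_{i_l},\sum_m b_m-b_{i_l}\bigr)=1$; and the vertex-matrix determinant of an exchanged face is $2$ here, so $D(\Delta)$ is not ``governed by'' the determinants $\prod_l b_{i_l}$. What Proposition \ref{prop22} really gives is only the divisibility $D(\Delta)\mid\mathrm{lcm}\{b_{ij}\}$ (every coefficient denominator divides some $b_{ij}$); the paper's one-line proof of (i) overlooks the same cancellation. Fortunately the divisibility is all that is used downstream: weights of lattice points lie in $\frac{1}{\mathrm{lcm}\{b_{ij}\}}\Z_{\geq0}\cup\{+\infty\}$, the $W_\Delta$/$H_\Delta$ and Newton--Hodge bookkeeping is unchanged if one works throughout with the multiple $\mathrm{lcm}\{b_{ij}\}$ of the true denominator, and $p\equiv1\ (\bmod\ \mathrm{lcm}\{b_{ij}\})$ still yields ordinarity; so you should state and prove (i) as a divisibility (equivalently, ``one may take $D=\mathrm{lcm}\{b_{ij}\}$ in all subsequent formulas'') rather than as an equality.
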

\begin{proof}
	The denominator can be deduced immediately from the equation of $\delta_i$. For each $\delta_i$, the restriction of $f$ to $\delta_i$ is defined by 
	\begin{align*}
	f^{\delta_i}=\sum_{V_j\in\delta_i}a_jx^{V_j}.
	\end{align*}
	Note that each $f^{\delta_i}$ is diagonal. According to proposition \ref{propc1}, we get the condition when $f^{\delta_i}$ is non-degenerate. Using Wan's facial decomposition theorem,  we obtain $(ii)$. 
	
	Now we prove $(iii)$.
	For $1\leq k\leq \prod^{r}_{i=1}\left(1+n_i\right)$, let $\Delta_k$ be the polytope generated by $\delta_k$ and the origin. It can be deduced from the facial decomposition of $\Delta$ \cite{Dwan1993} that
	\begin{align*}
	\Vol(\Delta)=\sum_{k=1}^{\prod^{r}_{i=1}\left(1+n_i\right)}\Vol(\Delta_k).
	\end{align*}
	By proposition \ref{prop22} and formula (\ref{eq2}), we obtain $\Vol(\Delta)$ in this proposition.
\end{proof}
\begin{Prop}
	Let $D$ be the denominator of $\Delta$. The polynomial $f$ is ordinary if $p\equiv 1(\bmod D)$.
\end{Prop}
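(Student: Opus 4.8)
The plan is to combine Wan's facial decomposition theorem (Theorem~\ref{thm9}) with the diagonal ordinary criterion (Proposition~\ref{prop15}). By Theorem~\ref{thm9}, $f$ is ordinary if and only if each restriction $f^{\delta_i}$ to a co-dimension $1$ face $\delta_i$ not containing the origin is ordinary. Since every such $f^{\delta_i}$ is diagonal (as noted in the proof of Proposition~\ref{th5}), and already known to be non-degenerate under $p \nmid b_{ij}$, it suffices to show that each $f^{\delta_i}$ is ordinary whenever $p \equiv 1 \pmod{D}$. By Proposition~\ref{prop15}, this will follow once we verify that the largest invariant factor $d_{n+1}$ of the group $S(\Delta_i)$ divides $D$, for then $p \equiv 1 \pmod D$ forces $p \equiv 1 \pmod{d_{n+1}}$.

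The key step is therefore to control the invariant factors of $S(\Delta_i)$, equivalently the Smith normal form of the vertex matrix $M(\Delta_i)$. First I would write down $M(\Delta_i)$ explicitly using Proposition~\ref{eqv}: its columns are the $n+1$ nonzero vertices of $\Delta_i$, namely $V_{n+1}$, the vertices $V_i$ with $i \notin \{i_1,\ldots,i_k\}$, and the vertices $V_{n+1+j_l}$ for $1 \le l \le k$. Because $V_{n+1} = (0,\ldots,0,1)$ and each standard basis vector $V_i$ is a column, the matrix is, after reordering, nearly a permutation matrix: the only ``non-trivial'' columns are the $V_{n+1+j_l}$, each of which has entries $-b_{N_{j_l-1}+1},\ldots,-b_{N_{j_l}}$ in the coordinate block for the $j_l$-th group and a $1$ in the last coordinate. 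I would then perform row/column operations (the usual Smith normal form reduction): use the $1$'s coming from the retained standard basis vectors $V_i$ and from $V_{n+1}$ to clear out all but one entry in each non-trivial column, and deduce that the nonzero entries remaining are, up to sign, the coordinates $b_{i_l}$ of $V_{n+1+j_l}$ in position $i_l$. This shows that the invariant factors of $M(\Delta_i)$ all divide $\mathrm{lcm}\{b_{i_1},\ldots,b_{i_k}\}$, which divides $D$.

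An alternative and perhaps cleaner route avoids Smith normal form entirely: directly examine the defining linear system for $S(\Delta_i)$, namely $M(\Delta_i)^{\mathrm{t}} \vec r \equiv 0 \pmod 1$ with $r_m \in \Q \cap [0,1)$. Reading off the rows corresponding to the standard basis vectors and to $V_{n+1}$ forces most of the $r_m$ to be integers, hence $0$; the only surviving constraints come from the rows $-b_{i_l} r_{i_l} + r_{n+1} \equiv 0 \pmod 1$ (or the analogous relation once the structure is unwound), which shows every element of $S(\Delta_i)$ has order dividing $\mathrm{lcm}\{b_{i_l}\} \mid D$. Either way, once $d_{n+1} \mid D$ is established, Proposition~\ref{prop15} gives ordinarity of each $f^{\delta_i}$ under $p \equiv 1 \pmod D$, and Theorem~\ref{thm9} upgrades this to ordinarity of $f$.

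The main obstacle I anticipate is purely bookkeeping: keeping the indexing of the vertices in Proposition~\ref{eqv} straight while performing the matrix reduction, in particular correctly tracking which coordinate $i_l$ of $V_{n+1+j_l}$ survives as a nonzero pivot and confirming it equals $b_{i_l}$ as in Proposition~\ref{prop22}. There is no deep difficulty here --- the geometry has already been set up in the preceding propositions --- so the argument should be short once the vertex matrix is written down carefully.
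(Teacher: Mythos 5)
Your proposal is correct and takes essentially the same route as the paper, whose proof is just the one-line observation that the statement follows from Proposition \ref{prop15} together with Wan's facial decomposition theorem (Theorem \ref{thm9}). The extra detail you supply is sound: for each $\Delta_i$ the congruence system forces $S(\Delta_i)\cong\bigoplus_{l=1}^{k}\Z/b_{i_l}\Z$, so the largest invariant factor divides $\mathrm{lcm}\{b_{i_l}\}\mid D$, which is exactly the hypothesis of Proposition \ref{prop15} that the paper leaves implicit.
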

\begin{proof}
	This proposition follows from proposition \ref{prop15} and Wan's facial decomposition theorem.
\end{proof}

Now we are ready to consider the Hodge number of $\Delta$.
\begin{Th}\label{th6}
	Let $b_{ij}\in\Z_{>0}$ for $1\leq i\leq r$ and $1\leq j\leq n_i$. The number of $H_{\Delta}(k)$ equals the coefficient of $x^k$ in $G(x)$ as follow.
	\begin{align}%\label{eqw2}
	G(x)=\left(1-x^D\right)^{n-r}\prod^{r}_{i=1}\frac{1-x^{\left(1+\sum^{n_i}_{j=1}\frac{1}{b_{ij}}\right)D}}{\prod_{j=1}^{n_i}\left(1-x^{D/b_{ij}}\right)}.
	\end{align}
\end{Th}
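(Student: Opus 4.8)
The plan is to compute the Hodge numbers $H_\Delta(k)$ via the weight generating function, exploiting the fact that the weight function on the cone $C(\Delta)$ is piecewise linear and, by the facial decomposition and Propositions \ref{eqv}--\ref{prop22}, completely explicit. First I would set up the generating function $W(x) = \sum_{u \in \Z^{n+1} \cap C(\Delta)} x^{D\,w(u)}$, and recall the defining identity $H_\Delta(k) = \sum_{i=0}^{n+1}(-1)^i\binom{n+1}{i}W_\Delta(k-iD)$ from Definition \ref{def5} (note our polytope lives in $\R^{n+1}$, so the binomial sum runs to $n+1$); in generating-function form this reads $\sum_k H_\Delta(k)x^k = (1-x^D)^{n+1} W(x)$. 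So the task reduces to computing $W(x)$ and then checking that $(1-x^D)^{n+1}W(x)$ equals the claimed $G(x)$.

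The key step is computing $W(x)$. Here I would use the cone decomposition: $C(\Delta)$ is a union of the simplicial cones $C(\Delta_k)$ spanned by the origin together with the vertices of the faces $\delta_k$ described in Proposition \ref{eqv}, and these cones meet along lower-dimensional faces. One clean route is to observe that the lattice point set of $C(\Delta)$ can be parametrized directly: a lattice point $u \in C(\Delta)$ is determined by its coordinates, and the weight $w(u)$ is $\max$ over the faces $\delta_k$ of the linear form $h_k(u)+1$ evaluated appropriately — but because of the product structure over the $r$ blocks (the $i$-th block involving variables $x_{N_{i-1}+1},\dots,x_{N_i}$ plus the shared variable $x_{n+1}$), the whole counting problem factors. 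Concretely, after isolating the role of $x_{n+1}$ (which by Proposition 3.3 lies in every $\delta_k$) and of the "free" variables $x_1,\dots,x_n$ that are not tied to any $V_{n+1+j}$, the weight is a sum of contributions, one from each block $i$, where the $i$-th contribution depends only on the sign pattern of the exponents in that block and involves the fraction $1/b_{ij}$ exactly as in the exponents appearing in $G(x)$. This should yield
\begin{align*}
W(x) = \frac{1}{(1-x^D)^{r+1}}\prod_{i=1}^{r}\frac{1-x^{(1+\sum_{j=1}^{n_i} 1/b_{ij})D}}{\prod_{j=1}^{n_i}(1-x^{D/b_{ij}})},
\end{align*}
or something equivalent up to which factors one attributes to the cone over $\delta_k$ versus the gluing corrections; multiplying by $(1-x^D)^{n+1}$ then gives $G(x)$ since $(n+1)-(r+1) = n-r$.

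The order of operations would be: (1) recall $\sum_k H_\Delta(k)x^k = (1-x^D)^{n+1}W(x)$; (2) use the facial decomposition of $\Delta$ into the $\Delta_k$ and an inclusion–exclusion over their common faces to write $W(x)$ as a combination of the generating functions $W_k(x)$ of the simplicial cones $C(\Delta_k)$; (3) compute each $W_k(x)$ from the vertex matrix of $\Delta_k$ — for a simplicial cone on vectors $V_{i_1},\dots,V_{i_{n+1}}$ with $w(V_{i_s})$ the appropriate weights, the lattice-point generating function (over the half-open fundamental parallelepiped times the free monoid on the generators) is $\bigl(\sum_{v}x^{D\,w(v)}\bigr)/\prod_s(1-x^{D\,w(V_{i_s})})$ where $v$ ranges over the box points, and the box-point sum is governed by the group $S(\Delta_k)$ from Section \ref{subsec24}; (4) assemble, observe the telescoping/product structure over the $r$ blocks, and simplify to $G(x)$.

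The main obstacle will be step (3) combined with the bookkeeping in step (2): correctly handling the non-unimodularity of the cones $C(\Delta_k)$ (the box-point contributions, i.e. the numerator terms $1-x^{(1+\sum 1/b_{ij})D}$, come precisely from the structure of $S(\Delta_k)$ and the denominators $D/b_{ij}$ from the weights $w(V_{n+1+j_l}) = (\sum_m b_m)/(b_{i_l}\cdot(\text{something}))$), and making sure the inclusion–exclusion over shared faces of the $\Delta_k$ collapses cleanly into the stated product — the danger is double-counting lattice points lying on walls between adjacent cones. I expect the block product structure to make this tractable: the $r$ blocks are "independent" in the cone geometry, so the generating function should factor as a product over $i=1,\dots,r$ of a one-block generating function (a small explicit computation in the $n_i+2$ variables $x_{N_{i-1}+1},\dots,x_{N_i},x_{n+1}$) times the free-variable factor, and only the shared variable $x_{n+1}$ needs care, accounting for the shift from $(1-x^D)^{-(r+1)}$ toward the final exponent $n-r$ after multiplying by $(1-x^D)^{n+1}$.
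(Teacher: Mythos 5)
Your proposal is a plan rather than a proof: after the (correct) reduction $\sum_k H_{\Delta}(k)x^k=(1-x^D)^{n+2-1}W(x)$ with $W(x)=\sum_{u\in\Z^{n+1}\cap C(\Delta)}x^{Dw(u)}$, everything hinges on the claim that $W(x)=(1-x^D)^{-(r+1)}\prod_{i=1}^r\bigl(1-x^{(1+\sum_j 1/b_{ij})D}\bigr)/\prod_j\bigl(1-x^{D/b_{ij}}\bigr)$, and this you only assert ("should yield"), deferring exactly the hard part: the box-point sums of the non-unimodular cones $C(\Delta_k)$ and the inclusion--exclusion (M\"obius bookkeeping) over the shared faces of the $\prod_i(1+n_i)$ facial cones. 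That bookkeeping is the entire content of the theorem, so as written there is a genuine gap at the decisive step. For comparison, the paper does not decompose into facial cones at all: it characterizes the weight-minimizing representations $u=\sum_l c_lV_l$ (using the relations $V_{n+1+i}+\sum_{l=N_{i-1}+1}^{N_i}b_lV_l=V_{n+1}$, minimality forces a vanishing coefficient in each block $S_i$), parametrizes lattice points by such coefficient vectors, and performs a single inclusion--exclusion over the $r$ blocks; your route through $S(\Delta_k)$-box points is different machinery and strictly more bookkeeping.

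More seriously, the final step ``assemble and simplify to $G(x)$'' cannot succeed for general $b_{ij}$, because the target series fails a consistency check that your own step (1) imposes: one must have $G(1)=\sum_kH_{\Delta}(k)=(n+1)!\Vol(\Delta)=\prod_i\bigl(1+\sum_jb_{ij}\bigr)$ by Proposition \ref{th5}. Take $r=1$, $n=n_1=2$, $b_{11}=b_{12}=2$, so $D=2$: the stated $G(x)=(1-x^2)\frac{1-x^4}{(1-x)^2}=(1+x)(1+x+x^2+x^3)$ gives $G(1)=8$, while $(n+1)!\Vol(\Delta)=1+2+2=5$; moreover the proposed $W(x)$ has a nonzero coefficient at $x^1$ (a lattice point of weight $1/2$), yet by Proposition \ref{prop22} the three facial functionals $x_1+x_2+x_3$, $-x_1+x_2+x_3$, $x_1-x_2+x_3$ are integral on $\Z^3$, so every lattice point has integral weight. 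Hence a correct execution of your cone/box-point computation terminates at a series different from the stated one, and no amount of wall bookkeeping closes the gap. The failure is located precisely in the block-factorization heuristic of your ``clean route'': in a minimal representation the denominators $1/b_{ij}$ attach to the coefficient of $V_{n+1+i}$ (forcing correlated fractional parts in $c_{n+1}$ and in the other coefficients of that block), not independently to the unit-vector coefficients, so the count does not factor as $\prod_j(1-x^{D/b_{ij}})^{-1}$ per block. When all $b_{ij}=1$ the check passes ($G_1(1)=\prod_i(1+n_i)$) and your plan does go through; be aware that the same subtlety is the delicate point in the paper's own derivation of $g(x)$, which asserts $c_l\in\frac{1}{b_l}\Z_{\geq0}$ for $1\leq l\leq n$ with the remaining coefficients integral, so you should re-derive the weight generating function in a small example such as the one above before trusting either form of the product formula.
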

\begin{proof}
	By formula (\ref{eqW}) and (\ref{eq3}), to obtain the number of $H_{\Delta}(k)$, it suffices to consider the weight function $w(u)$, where $u$ is a lattice point in $\Z^{n+1}$. Recall that
	\begin{align}\label{eqw}
	w(u)=\inf_{\vec{c}}\left\{\sum^{n+r+1}_{l=1}c_l \ \bigg\rvert\  \sum^{n+r+1}_{l=1}c_lV_l=u, c_l\in\R_{\geq 0}\right\}.
	\end{align}
	By the relationship that $b_{ij}=b_{N_{i-1}+j}$, non-negative integral constant $b_{ij}\ (1\leq i\leq r, 1\leq j\leq n_i)$ can be relabeled as $b_{k}\ (1\leq k\leq n)$. 
	For our example, it can be deduced that $c_l\in\frac{\Z_{\geq 0}}{b_l}$ for $1\leq l\leq n$ and $c_l\in\Z_{\geq 0}$ for $n+1\leq l\leq n+r$. Since $w(u)$ is determined by its associated coefficients,  
	we need to discuss when $\sum^{n+r+1}_{l=1}c_l$ reaches its minimum. 
	
	For $1\leq i\leq r$, divide the first $n$ coordinates of $u$ into $r$ disjoint parts in order, where the $i$-th part has $n_i$ elements. 
	\begin{align*}
	u=(\underbrace{u_1,\cdots,u_{n_1}}_{n_1},\cdots,\underbrace{u_{N_{i-1}+1},\cdots,u_{N_i}}_{n_i},\cdots,\underbrace{u_{N_{r-1}+1},\cdots,u_n}_{n_r},u_{n+1})
	\end{align*}
	Let $M_i=\{V_{N_{i-1}+1},\cdots,V_{N_{i}}\}\cup \{V_{n+1+i}\}$ and $S_i=\{c_{N_{i-1}+1},\cdots,c_{N_{i}}\}\cup \{c_{n+1+i}\}$.  Elements in $S_i$ are coefficients of vertices in $M_i$. Note that $|S_i|=n_i+1$ and $S_{i_1}\cap S_{i_2}=\emptyset$ for $1\leq i_1\leq i_2\leq r$. For our example, formula (\ref{eqw}) implies that each part of coordinates of $u$, i.e., $\{u_{N_{i-1}+1},\cdots,u_{N_i}\}$, are uniquely determined by the vertices in $M_i$ and thus the coefficients in $S_i$. 
	
	We claim that $\sum_{l=1}^{n+r+1}c_l$ reaches its minimum if and only if at least one of elements in $S_i$ is $0$ for all $1\leq i \leq r$. 
	First we prove the necessary condition.
	If we focus on $n_i$ coordinates of $u$, we consider the corresponding $n_i$ coordinates of each vertex in $M_i$. 
	For a vertex $V=(v_{1},v_{2},\cdots,v_{n+1})$, let $$V^{(n_i)}=(\underbrace{v_{N_{i-1}+1},\cdots,v_{N_i}}_{n_i})$$ denote the $i$-th part of coordinates of $V$. Let 
	$$M_i^{(n_i)}=\{V_{N_{i-1}+1}^{(n_i)},\cdots,V_{N_{i}}^{(n_i)}\}\cup \{V_{n+1+i}^{(n_i)}\}.$$
	If we take $V^{(n_i)}$ as a vector in $\Z^{n_i}$, $M_i^{(n_i)}$ is linearly dependent, while any proper subset of $M_i^{(n_i)}$is linearly independent.
	Therefore, if at least one of elements in $S_i$ vanishes, $\sum_{c_l\in S_i}c_l$ is unique and thus minimum. The sum $\sum_{c_l\in S_i}c_l$ reaches minimum for all $1\leq i \leq r$ implies $\sum_{l=1}^{n+r+1}c_l$ reaches minimum.
	The sufficient condition holds as well and can be proved by contradiction. Suppose none of the elements in $S_i$ equal $0$ for some $1\leq i \leq r$. Let $c=\sum_{l=1}^{n+r+1}c_l$. By the selection of vertices in proposition \ref{eqv}, $u$ doesn't lie on any co-dimension 1 face $\delta$ of $c\Delta$, which implies that $\sum_{l=1}^{n+r+1}c_l$ can be reduced.

	It follows that $W_{\Delta}(k)$ equals the number of solutions to 
	\begin{align*}
	\sum^{n+r+1}_{l=1}c_l=k/D,
	\end{align*}
	satisfying that $\prod_{c_l\in S_i}c_l=0$ for $1\leq i\leq r$.
	That is to say, $W_{\Delta}(k)$ equals the number of non-negative integer solutions to 
	\begin{align}\label{eq4}
	\frac{D}{b_1}x_1+\cdots+\frac{D}{b_n}x_n+Dx_{n+1}+\cdots+Dx_{n+r+1}=k,
	\end{align}
	satisfying that $x_{N_{i-1}+1}\cdots x_{N_{i}}x_{n+1+i}=0$ for $1\leq i \leq r$.
	
	Now we determine the generating function for $W_{\Delta}(k)$ with the help of formula (\ref{eq4}).
	\begin{align*}
	g(x)=&\frac{1}{\prod_{k=1}^{n}\left(1-x^{D/b_k}\right)\left(1-x^D\right)^{r+1}}-\sum^{r}_{i=1}\frac{\prod_{j=N_{i-1}+1}^{N_i}x^{D/b_j}x^D}{\prod_{k=1}^{n}\left(1-x^{D/b_k}\right)\left(1-x^D\right)^{r+1}}\\
	&+\mathop{\sum^r_{i_1,i_2=1}}_{i_1<i_2}\frac{\prod_{j=N_{i_1-1}+1}^{N_{i_1}}x^{D/b_j}x^D\prod_{j=N_{i_2-1}+1}^{N_{i_2}}x^{D/b_j}x^D}{\prod_{k=1}^{n}\left(1-x^{D/b_k}\right)\left(1-x^D\right)^{r+1}}-\cdots\\
	&+(-1)^r\frac{\prod_{k=1}^{n}x^{D/b_k}x^{rD}}{\prod_{k=1}^{n}\left(1-x^{D/b_k}\right)\left(1-x^D\right)^{r+1}}\\
	=&\frac{\prod^{r}_{i=1}\left(1-x^{\left(1+\sum^{N_i}_{j=N_{i-1}+1}\frac{1}{b_{j}}\right)D}\right)}{\prod_{k=1}^{n}\left(1-x^{D/b_k}\right)\left(1-x^D\right)^{r+1}}.
	\end{align*}	
	Replace $b_k$ with $b_{ij}$, we obtain
	\begin{align*}
	g(x)=\left(1-x^D\right)^{-r-1}\prod^{r}_{i=1}\frac{1-x^{\left(1+\sum^{n_i}_{j=1}\frac{1}{b_{ij}}\right)D}}{\prod_{j=1}^{n_i}\left(1-x^{D/b_{ij}}\right)}.
	\end{align*}
	Note that $W_{\Delta}(k)$ equals the coefficients of $x^k$ in $g(x)$. By formula (\ref{eq3}), the generating function of $H_{\Delta}(k)$ is as follow.
	\begin{align*}
	G(x)&=\left(1-x^D\right)^{n+1}g(x)\\
	&=\left(1-x^D\right)^{n-r}\prod^{r}_{i=1}\frac{1-x^{\left(1+\sum^{n_i}_{j=1}\frac{1}{b_{ij}}\right)D}}{\prod_{j=1}^{n_i}\left(1-x^{D/b_{ij}}\right)}.
	\end{align*}
	The number $H_{\Delta}(k)$ equals the coefficients of $x^k$ in $G(x)$.
\end{proof}

As a corollary, we obtain the number of $H_{\Delta}(k)$ when $b_{ij}=1$.
\begin{Cor}\label{corow}
	If $b_{ij}=1$ for $1\leq i\leq r$ and $1\leq j\leq n_i$, the number of $H_{\Delta}(k)$ equals the coefficients of $x^k$ in $G_1(x)$ as follow.
	\begin{align}\label{eqw2}
	G_1(x)=\prod_{i=1}^{r}\left(1+x+\cdots+x^{n_i}\right).
	\end{align}
	In this case, we have $H_{\Delta}(0)=1$, $H_{\Delta}(1)=r$ and $H_{\Delta}(k)=H_{\Delta}(n-k)$ for $0\leq k\leq n$.
\end{Cor}

Then we obtain the slopes of $\alpha_i$.
\begin{Th}\label{thm312}
	Let $h_k$ be the number of reciprocal roots of $\LF(\vec{a},T)^{(-1)^n}$ with q-adic slope $k/D$, i.e.
	\begin{align*}
	h_k=\#\{\alpha_i \rvert \ord_q\alpha_i=k/D\}.
	\end{align*}
	
	For $b_{ij}\in\Z_{>0}$ with $1\leq i\leq r$ and $1\leq j\leq n_i$, assume $p\equiv 1(\bmod D)$. The number of $h_k$ equals the coefficients of $x^{k+D}$ in $G(x)$
	for $k=0,1,\cdots,nD$ and $h_k=0$ for any rational number $k\notin\{0,1,\cdots,nD\}$.
	
	In particular, for $b_{ij}=1$ with $1\leq i\leq r$ and $1\leq j\leq n_i$, the number of $h_k$ equals the coefficients of $x^{k+1}$ in $G_1(x)$
	for $k<n$, and $h_k=0$ for $k\geq n$.
\end{Th}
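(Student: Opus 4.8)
The plan is to carry the structural information we already have about $\LF^*(f,T)^{(-1)^n}$ over to $\LF(\vec a,T)^{(-1)^n}$ through the identity $(\ref{eql})$. Since $f$ is a non-degenerate Laurent polynomial in the $n+1$ variables $x_1,\dots,x_{n+1}$ (Proposition \ref{th5}), Adolphson--Sperber's Theorem \ref{thm2} shows that $\LF^*(f,T)^{(-1)^n}$ is a polynomial of degree $(n+1)!\,\Vol(\Delta)=d$, the last equality being Proposition \ref{th5}(iii). Because $p\equiv 1 \pmod D$, the polynomial $f$ is ordinary (as established above), so the $q$-adic Newton polygon $\mathrm{NP}(f)$ of $\LF^*(f,T)^{(-1)^n}$ coincides with $\mathrm{HP}(\Delta)$. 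By Definition \ref{def6} and Theorem \ref{th6}, $\mathrm{HP}(\Delta)$ has, for each integer $k\ge 0$, a side of slope $k/D$ of horizontal length $H_\Delta(k)$, the coefficient of $x^k$ in $G(x)$, and no side of any other slope; since $G(x)$ has degree $nD$ and $G(0)=1$, all of these slopes lie in $\{\,k/D : 0\le k\le nD\,\}$. Lemma \ref{le4} then gives the count: $\LF^*(f,T)^{(-1)^n}$ has exactly $H_\Delta(k)$ reciprocal roots of $q$-adic slope $k/D$.

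Next I would isolate the trivial reciprocal root. The origin $V_0$ is a vertex of $\Delta$ and $f$ has no constant term, so Corollary \ref{coro12} furnishes a factorization $\LF^*(f,T)^{(-1)^n}=(1-T)\prod_{i=1}^{d-1}(1-\alpha_i T)$. Since $H_\Delta(0)=W_\Delta(0)=1$, the origin being the only lattice point of weight $0$, the reciprocal root $1$ is the unique one of $q$-adic slope $0$; hence for every $k\ge 1$ the integer $H_\Delta(k)$ counts exactly the $\alpha_i$ with $1\le i\le d-1$ and $\ord_q\alpha_i=k/D$.

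Then I would substitute $T\mapsto T/q$ in $(\ref{eql})$: the factor $1-T$ becomes $1-T/q$, which cancels the pole $1/(1-T/q)$, while each $1-\alpha_i T$ becomes $1-(\alpha_i/q)T$, so
\[
\LF(\vec a,T)^{(-1)^n}=\prod_{i=1}^{d-1}\left(1-(\alpha_i/q)T\right)
\]
is a polynomial of degree $d-1$ whose reciprocal roots $\alpha_i/q$ have $q$-adic slope $\ord_q\alpha_i-1$. Therefore $h_k=\#\{\,1\le i\le d-1 : \ord_q\alpha_i=(k+D)/D\,\}$, which by the previous step equals the coefficient of $x^{k+D}$ in $G(x)$ whenever $k+D\ge 1$; in particular $h_k$ is that coefficient for all $k=0,1,\dots,nD$. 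For $k\notin\{0,1,\dots,nD\}$ one gets $h_k=0$: if $k>(n-1)D$ or $k<-D$ the coefficient of $x^{k+D}$ in $G(x)$ already vanishes since $0\le\deg G=nD$; if $k=-D$ nothing is contributed because the only slope-$0$ root of $\LF^*(f,T)^{(-1)^n}$ is the trivial root just cancelled; and for $-D<k<0$ the monomial $x^{k+D}$, of exponent strictly between $0$ and $D$, does not occur in $G(x)$. Finally, setting $D=1$ replaces $G(x)$ by $G_1(x)=\prod_{i=1}^r(1+x+\cdots+x^{n_i})$, of degree $n$ (Corollary \ref{corow}), and the $b_{ij}=1$ assertion follows immediately.

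The one point that needs genuine care is the bookkeeping in this last step: dividing by $1-T/q$ and rescaling $T$ by $q$ simultaneously shift every $q$-adic slope down by $1$ and delete one reciprocal root, so one has to be certain that removing the slope-$0$ root $\alpha=1$ leaves all the other slope counts unchanged, which is exactly why it is essential that $H_\Delta(0)=1$ forces $\alpha=1$ to be the \emph{unique} reciprocal root of $q$-adic slope $0$. Beyond that, the argument is a direct assembly of Theorems \ref{thm2} and \ref{th6}, Corollary \ref{coro12}, Lemma \ref{le4}, the ordinariness of $f$ for $p\equiv 1 \pmod D$, and the identity $(\ref{eql})$.
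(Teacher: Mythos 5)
Your argument is essentially the paper's: ordinariness for $p\equiv 1\pmod D$ gives $\mathrm{NP}(f)=\mathrm{HP}(\Delta)$, Lemma \ref{le4} turns the Hodge numbers of Theorem \ref{th6} into root counts for $\LF^*(f,T)^{(-1)^n}$, and identity (\ref{eql}) shifts every slope down by $1$ while the factor $1/(1-T/q)$ cancels the trivial root; your use of Corollary \ref{coro12} together with $H_\Delta(0)=1$ to pin down that cancelled root is a careful elaboration of a point the paper's two-line proof glosses over.

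The one step that does not hold as written is your treatment of the range $-D<k<0$: you assert that $G(x)$ contains no monomial of exponent strictly between $0$ and $D$, but for the explicit $G$ of Theorem \ref{th6} this is false whenever some $b_{ij}\geq 2$. For example $r=1$, $n_1=1$, $b_{11}=2$ gives $D=2$ and $G(x)=(1-x^{3})/(1-x)=1+x+x^{2}$, whose coefficient of $x^{1}$ is $1$; more generally $G(x)\equiv \prod_{i,j}\left(1-x^{D/b_{ij}}\right)^{-1} \pmod{x^{D}}$, which has nonzero coefficients below $D$ as soon as some $b_{ij}>1$. What your argument actually needs is that $\LF^*(f,T)^{(-1)^n}$ has no reciprocal root of slope strictly between $0$ and $1$, i.e.\ that $W_\Delta(m)=0$ (hence $H_\Delta(m)=0$) for $0<m<D$. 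This is true and can be proved directly from the vertex structure: every generator $V_1,\dots,V_{n+r+1}$ of $C(\Delta)$ has last coordinate $0$ or $1$, so any representation $u=\sum_l c_lV_l$ with $c_l\geq 0$ satisfies $\sum_l c_l\geq u_{n+1}$; thus a lattice point with $u_{n+1}\geq 1$ has weight at least $1$, while $u_{n+1}=0$ forces the coefficients of $V_{n+1},\dots,V_{n+r+1}$ to vanish, making $u$ a nonnegative integer combination of the unit vectors $V_1,\dots,V_n$, again of weight at least $1$ unless $u=0$. Replacing your appeal to the shape of $G(x)$ by this observation closes the gap. Be aware, though, that the same observation shows the coefficients of $x^m$ in $G(x)$ for $0<m<D$ cannot be genuine Hodge numbers when some $b_{ij}>1$, so the mismatch you ran into reflects a tension with Theorem \ref{th6} itself (and hence with the slope counts asserted here for $b_{ij}>1$); the paper's own proof is silent on this range.
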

\begin{proof}
	When $p\equiv 1(\bmod D)$, Laurent polynomial $f$ is ordinary. In this case, $h_k=H_{\Delta}(k+D)$ by lemma \ref{le4} and the fact that $\ord_q(\alpha_i)=\ord_q(\beta_i)-1$. Combining corollary \ref{corow} and formula (\ref{eq3}), we obtain $h_k$ in this theorem.
\end{proof}

Using Wan's boundary theorem, we can factor L-function as follow. 
\begin{Th}\label{thm35}
	We have
	\begin{align*}	
	\LF(\vec{a},T)^{(-1)^n}&=(1-T)^{r-1}\prod^{d-r}_{i=1}(1-\alpha_iT),\\
	\LF^*(f,T)^{(-1)^n}&=(1-T)(1-qT)^{r-1}\prod^{d-r}_{i=1}(1-\beta_iT),
	\end{align*}
	where  $d=\prod^{r}_{i=1}\left(1+\sum_{j=1}^{n_i}b_{ij}\right)$ and $\alpha_i=\beta_i/q$.
\end{Th}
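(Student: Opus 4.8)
The plan is to pin down the structure of $\LF^*(f,T)^{(-1)^n}$ via the boundary decomposition theorem and then transfer the result to $\LF(\vec a,T)^{(-1)^n}$ through formula (\ref{eql}). First I would note that, by Proposition \ref{th5}, $f$ is non-degenerate with $(n+1)$-dimensional Newton polyhedron $\Delta$, and the origin $V_0$ is a vertex of $\Delta$; hence Corollary \ref{coro12} already gives one trivial reciprocal root coming from the constant term of $f$. Since the constant term of $f$ is $0$ (the Laurent polynomial $f=\sum_{i=1}^n x_i+x_{n+1}(\sum a_i/\prod x_l^{b_l}-1)$ has $\psi(\Tr(0))=1$), this produces the factor $(1-T)$. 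The total degree of $\LF^*(f,T)^{(-1)^n}$ is $(n+1)!\Vol(\Delta)=d=\prod_{i=1}^r(1+\sum_j b_{ij})$ by Theorem \ref{thm2} and Proposition \ref{th5}(iii), so it remains to extract the $(1-qT)^{r-1}$ factor and bound the remaining $d-r$ roots.

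The key step is to apply the boundary decomposition theorem to $A_1(f)$ relative to $B(\Delta)$. The open cones $\Sigma\in B(\Delta)$ of dimension $\le 1$ are the origin (contributing the $(1-T)$ piece already identified, via the weight-zero block) and the one-dimensional rays through the vertices $V_1,\dots,V_{n+1},V_{n+2},\dots,V_{n+r+1}$. The rays through $V_1,\dots,V_n$ each contribute a factor whose reciprocal roots come from a one-variable exponential sum that is pure of weight $1$ but the leading contribution — I would argue — is again a trivial factor absorbed into the higher-dimensional pieces; the crucial contribution is from the rays through $V_{n+2},\dots,V_{n+r+1}$. For each fixed $i$, the vertex $V_{n+1+i}=(0,\dots,-b_{N_{i-1}+1},\dots,-b_{N_i},\dots,0,1)$ lies on a ray, and $f^{\overline{\Sigma}}$ restricted to this ray is (up to the Teichmüller constant) a single monomial term $x_{n+1}\cdot a_i/\prod x_l^{b_l}$; the associated $A_1(\Sigma,f^{\overline\Sigma})$ is essentially a $1\times 1$ Frobenius block whose reciprocal root has $q$-adic slope $1$, i.e. it contributes a factor $(1-qT)$ to $\LF^*(f,T)^{(-1)^n}$ up to sign. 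Summing over $i=1,\dots,r$ gives $r$ such factors, but the ray through $V_{n+1}$ itself (the pure monomial $x_{n+1}$, equivalently $-x_{n+1}$) contributes one slope-$0$ factor that must be reconciled with the already-counted $(1-T)$; a careful bookkeeping of which ray supplies the slope-$0$ root versus the slope-$1$ roots yields exactly $(1-T)(1-qT)^{r-1}$ as the ``boundary part,'' with the remaining $d-r$ roots $\beta_i$ coming from the top-dimensional open cone (the interior of $C(\Delta)$) and satisfying $|\beta_i|\le q^{(n+1)/2}$ by Corollary \ref{coro12}. Then (\ref{eql}) gives $\LF(\vec a,T)^{(-1)^n}=\frac{1}{1-T/q}\LF^*(f,T/q)^{(-1)^n}$, and substituting $T\mapsto T/q$ turns $(1-T)\mapsto(1-T/q)$, which cancels the denominator, turns $(1-qT)^{r-1}\mapsto(1-T)^{r-1}$, and turns $(1-\beta_iT)\mapsto(1-(\beta_i/q)T)=(1-\alpha_iT)$; this is precisely the claimed factorization of $\LF(\vec a,T)^{(-1)^n}$ with $\alpha_i=\beta_i/q$.

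The main obstacle I anticipate is the precise bookkeeping in the boundary decomposition: showing that among the $n+r+1$ one-dimensional rays, exactly the $r$ rays through $V_{n+1},V_{n+2},\dots,V_{n+r+1}$ (or rather $r$ of the $r+1$ rays in this family) contribute the slope-$1$ factors $(1-qT)$, while all rays through $V_1,\dots,V_n$ contribute trivially (their one-variable L-functions being either trivial or already accounted for), and that no double-counting occurs between the boundary pieces and the interior piece. This requires computing, for each low-dimensional $\Sigma$, the one-variable restricted L-function $\det(I-TA_1(\Sigma,f^{\overline\Sigma}))$ explicitly — in each case this is a geometric-series-type Fredholm determinant over a rank-one lattice, so it reduces to identifying the Teichmüller constant and a single Gauss-sum-like eigenvalue, whose $q$-adic valuation is forced by the Stickelberger theorem (as in Proposition \ref{prop15}) together with the fact that the relevant monomial has weight $1$ in the appropriate cone. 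A secondary, more routine point is checking the sign $(-1)^n$ is consistent across (\ref{eql}) and the parity change from $(-1)^{n+1-1}=(-1)^n$ in Theorem \ref{thm2} applied to $n+1$ variables; this is purely formal once the degree count $d$ and the root count $1+(r-1)+(d-r)=d$ are seen to match.
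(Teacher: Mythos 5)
Your overall strategy (boundary decomposition for $\LF^*(f,T)$, then transfer via (\ref{eql})) is the same as the paper's, and the final substitution $T\mapsto T/q$ is handled correctly, but the central bookkeeping step contains genuine errors. First, the ray through $V_{n+1}=e_{n+1}$ is not a one-dimensional face of $C(\Delta)$: for every $i$ one has $e_{n+1}=V_{n+1+i}+\sum_{l=N_{i-1}+1}^{N_i}b_lV_l$, so any supporting functional vanishing at $e_{n+1}$ vanishes on all generators; hence $e_{n+1}$ lies in the interior of $C(\Delta)$, and the $1$-dimensional cones in $B(\Delta)$ are exactly the $n+r$ rays through $V_1,\dots,V_n,V_{n+2},\dots,V_{n+r+1}$. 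Second, and more importantly, there is no dichotomy of the kind you propose: all of these $n+r$ rays contribute identically. The restriction of $f$ to such a ray is a single monomial, normalizable to $x$, whose L-function is $1-T$; by formula (\ref{eq25}) the closed-ray Fredholm factor is $(1-T)(1-qT)(1-q^2T)\cdots$, and removing the origin piece leaves $(1-qT)(1-q^2T)\cdots$ for \emph{each} ray. So the coordinate rays $V_1,\dots,V_n$ do not ``contribute trivially'', and the rays through $V_{n+2},\dots,V_{n+r+1}$ are not $1\times1$ slope-one blocks (the restricted matrices are infinite); note also that the boundary decomposition factors the Fredholm determinant $\det(I-TA_a(f))$, not $\LF^*(f,T)^{(-1)^n}$ itself.

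Consequently, no ray-by-ray reconciliation of the type you describe can produce the exponent $r-1$; as stated, your count would yield $r$ or $r+1$ slope-one factors together with a spurious slope-zero factor. The missing ingredient is Dwork's trace formula (\ref{eq24}): from $D(T)=(1-T)(1-qT)^{n+r}\cdot(\text{higher slope part})$ one gets
\begin{equation*}
\LF^*(f,T)^{(-1)^n}=\frac{D(T)\,D(q^2T)^{\binom{n+1}{2}}\cdots}{D(qT)^{n+1}\,D(q^3T)^{\binom{n+1}{3}}\cdots},
\end{equation*}
and the slope-one multiplicity is $(n+r)-(n+1)=r-1$ precisely because of the cancellation against $D(qT)^{n+1}$, while the slope-zero multiplicity remains $1$; the degree $d=(n+1)!\Vol(\Delta)$ (Theorem \ref{thm2} together with Proposition \ref{th5}) then fixes the number $d-r$ of remaining roots. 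With this alternating-product step inserted (and the unproved, unnecessary claim that the leftover roots all come from the interior cone dropped), your argument becomes the paper's proof.
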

\begin{proof}
	Denote $D(q^iT)=\mathrm{det}\left(I-q^iTA_a(g)\right)$. 
	Recall the boundary decomposition $B(\Delta)$ defined in definition \ref{defbd}. Let $N(i)$ denote the number of $i$-dimensional face $\Sigma_{i,j}$ of $C(\Delta)$, where $0\leq i\leq \mathrm{dim}\Delta$ and $1\leq j\leq N(i)$. For Newton polyhedron $\Delta=\Delta(f)$, we have $N(0)=1$ and $N(1)=n+r$. For simplicity, we abbreviate $\Sigma_{i,j}$ to $\Sigma_i$ since the following proof with respect to face $\Sigma_{i,j}$ is independent of the choice of $j$. Note that $\Sigma_i$ is an open cone and $\Sigma_i\in B(\Delta)$. Let $\overline{\Sigma}_i$ be the closure of $\Sigma_i$.
	Denote 
	\begin{equation*}
	D_i'(T)=\mathrm{det}\left(I-TA_a(\Sigma_i,f^{\overline{\Sigma}_i})\right)\ \ \ 
	\text{and}\ \ \ 
	D_i(T)=\mathrm{det}\left(I-TA_a(\overline{\Sigma}_i,f^{\overline{\Sigma}_i})\right).
	\end{equation*}	
	It is obvious that the unique $0$-dimensional cone $\overline{\Sigma}_0$ is the origin
	and $D_0(T)=D_0'(T)=1-T$. When $i=1$, each $f^{\overline{\Sigma}_1}$ can be normalized to $x$ by variable substitution. Explicitly,
	\begin{align*}
	\LF^*(f^{\overline{\Sigma}_1},T)
	=\exp \left(\sum^\infty _ {k=1}-\frac{T^k}{k}\right)=1-T.
	\end{align*}
	By formula (\ref{eq25}), we have
	\begin{align*}
	D_1(T) &= \prod_{i=0}^{\infty} \left( \LF^{*}\left(f^{\overline{\Sigma}_1},q^iT\right)\right)^{\binom{i}{i}}
	= (1-T)\left(1-qT\right)\left(1-q^2T\right)\cdots.
	\end{align*} 
	Since the only boundary of $\overline{\Sigma}_1$ is $\overline{\Sigma}_0$, we get $D_1'(T)$ after eliminating $D_0'(T)$, i.e., 
	\begin{align*}
	D_1'(T)=\frac{D_1(T)}{D_0'(T)}=\left(1-qT\right)\left(1-q^2T\right)\prod_{i=3}^{\infty}\left(1-q^iT\right),
	\end{align*}	
	Using Wan's boundary decomposition theorem,
	\begin{align*}
	D(T)=\prod^{n+1}_{i=1}\prod^{N(i)}_{j=1}D_i'(T)
	=(1-T)(1-qT)^{n+r}\cdots,
	\end{align*}
	By theorem \ref{thm2}, $\LF^*(f,T)^{(-1)^n}$ is a polynomial.
	From formula (\ref{eq24}), we obtain
	\begin{align*}
	\LF^*(f,T)^{(-1)^n}
	=\frac{D(T)D(q^2T)^{\binom{n+1}{2}}\cdots}{D(qT)^{n+1}D(q^3T)^{\binom{n+1}{3}}\cdots} =(1-T)(1-qT)^{r-1}\prod^{d-r}_{i=1}(1-\beta_iT),
	\end{align*}
	where  $d=\prod^{r}_{i=1}\left(1+\sum_{j=1}^{n_i}b_{ij}\right)$. Together with formula (\ref{eql}), we get the factorization for $\LF(\vec{a},T)^{(-1)^n}$.
\end{proof}
\begin{Cor}\label{corol}
	Assume $b_i=1$ and $n_i$ is even for $1\leq i\leq n$. We have
	\begin{align*}	
	\LF(\vec{a},T)^{(-1)^n}&= (1-T)^{r-1}(1-qT)^{\frac{r^2-r}{2}}\prod^{d-\frac{r^2+r}{2}}_{i=1}(1-\alpha_iT),\\
	\LF^*(f,T)^{(-1)^n}&= (1-T)(1-qT)^{r-1}(1-q^2T)^{\frac{r^2-r}{2}}\prod^{d-\frac{r^2+r}{2}}_{i=1}(1-\beta_iT),
	\end{align*}
	where  $d=\prod^{r}_{i=1}\left(1+n_i\right)$ and $\alpha_i=\beta_i/q$. Note that $\ord_q\beta_i \geq 1$ and $\beta_i \neq q\ \text{or}\ q^2$ for $1\leq i\leq d-\frac{r^2+r}{2}$.
\end{Cor}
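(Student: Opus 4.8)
The strategy is to push the boundary–decomposition analysis of Theorem~\ref{thm35} one dimension further, combine it with the slope count of Theorem~\ref{thm312}, and finish with a weight estimate. Since $b_{ij}=1$ we have $D=1$, so $f$ is ordinary; and since every $n_i$ is even we have $n_i\ge 2$ and $n=\sum_i n_i$ even, hence $n+1$ odd. Both of these facts will be used, and that is the only place the hypothesis on the $n_i$ is needed.

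First I would record the slopes. By Corollary~\ref{corow} the number of reciprocal roots of $\LF^*(f,T)^{(-1)^n}$ of $q$-adic slope $j$ equals $H_\Delta(j)=[x^j]G_1(x)$ for $j=0,1,\dots ,n$; because each $n_i\ge 2$ this gives $H_\Delta(0)=1$, $H_\Delta(1)=r$ and $H_\Delta(2)=r+\binom r2$. Comparing with Theorem~\ref{thm35}, which already produces $(1-T)(1-qT)^{r-1}$, we conclude that every $\beta_i$ has $\ord_q\beta_i\ge 1$, that exactly one $\beta_i$ has slope $1$, and that exactly $r+\binom r2$ of the $\beta_i$ have slope $2$.

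Next, the main step. Continue the boundary decomposition of $\det\!\bigl(I-TA_a(f)\bigr)$ used in the proof of Theorem~\ref{thm35} to the $2$-dimensional cones of $B(\Delta)$. The extremal rays of $C(\Delta)$ are those through $V_1,\dots ,V_n$ and through $V_{n+2},\dots ,V_{n+r+1}$ (note $V_{n+1}$ is not extremal, since $V_{n+1}=V_{n+2}+\sum_{l=1}^{n_1}V_l$), so there are $n+r$ of them, and since $b_{ij}=1$ any subset of them extends to a $\Z$-basis of $\Z^{n+1}$. Hence for any proper face $\overline\Sigma$ of $C(\Delta)$ the restriction $f^{\overline\Sigma}$ is, after a monomial change of variables and adjoining dummy variables, a sum $a_1y_1+\dots +a_sy_s$ of $s=\dim\overline\Sigma$ monomials, whose L-function equals $1-T$; exactly as in the $i=1$ step of Theorem~\ref{thm35} this makes the full Frobenius determinant of the closed cone $\overline\Sigma$ equal to $\prod_{i\ge 0}(1-q^iT)^{\binom{s-1+i}{i}}$, and, after dividing off its boundary, the contribution of the open $s$-cone is $\prod_{j\ge s}(1-q^jT)^{a_{s,j}}$ with $a_{0,0}=1$, $a_{1,j}=1\ (j\ge 1)$, $a_{2,j}=j-1\ (j\ge 2)$. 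In particular a $1$-cone and a $2$-cone each contribute $(1-q^2T)$ to the first power, while cones of dimension $\ge 3$ contribute no $(1-q^2T)$. Using $n_i\ge 2$ one checks that every one of the $\binom{n+r}{2}$ pairs of extremal rays spans a $2$-face of $C(\Delta)$, so $N(1)=n+r$ and $N(2)=\binom{n+r}{2}$. Substituting these face contributions into \eqref{eq24} for $f$ (which has $n+1$ variables), i.e. into
\[
\LF^*(f,T)^{(-1)^n}=\prod_{i=0}^{n+1}\det\!\bigl(I-q^iTA_a(f)\bigr)^{(-1)^i\binom{n+1}{i}},
\]
and reading off the exponent of $(1-q^2T)$ gives
\[
\binom{n+1}{2}-n(n+r)+\binom{n+r}{2}=\binom r2,
\]
while the exponent of $(1-qT)$ equals $-(n+1)+(n+r)=r-1$. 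Thus the contribution to the multiplicities of $q$ and $q^2$ coming from all faces of dimension $\le n$ is exactly $r-1$ and $\binom r2$.

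Finally, the only remaining contribution comes from the interior cone of $C(\Delta)$, and the corresponding piece of $\LF^*(f,T)^{(-1)^n}$ is pure of weight $n+1$ (by the weight theory for non-degenerate exponential sums together with Katz's bound $|S_k(\vec a)|\le c_1q^{(n-1)k/2}$ transported through \eqref{eql}); since $n+1$ is odd and every power $q^m$ has even weight $2m$, this piece has no reciprocal root of the form $q^m$. Hence $q$ and $q^2$ occur in $\LF^*(f,T)^{(-1)^n}$ with multiplicity exactly $r-1$ and $\binom r2$, and together with the slope count (total slope-$1$ multiplicity $r$, total slope-$2$ multiplicity $r+\binom r2$) this yields
\[
\LF^*(f,T)^{(-1)^n}=(1-T)(1-qT)^{r-1}(1-q^2T)^{\binom r2}\prod_{i=1}^{\,d-\frac{r^2+r}{2}}(1-\beta_iT)
\]
with $\ord_q\beta_i\ge 1$ and $\beta_i\ne q,q^2$; the stated factorization of $\LF(\vec a,T)^{(-1)^n}$ then follows from \eqref{eql} with $\alpha_i=\beta_i/q$, as at the end of the proof of Theorem~\ref{thm35}. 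The main obstacle is the middle step — enumerating the $2$-faces of $C(\Delta)$, checking that all $\binom{n+r}{2}$ pairs of extremal rays span $2$-faces and that higher-dimensional faces contribute no $(1-q^2T)$, and verifying that the inclusion–exclusion collapses exactly to $(1-q^2T)^{\binom r2}$; a secondary point is the purity-of-weight-$(n+1)$ input, which, if one prefers to avoid it, can be replaced by noting that the $r$ leftover slope-$2$ reciprocal roots correspond under \eqref{eql} to roots $\gamma_i$ of $\LF(\vec a,T)^{(-1)^n}$ of weight $n-1$, again odd.
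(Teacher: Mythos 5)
Your first two steps---the slope counts from Corollary \ref{corow} and the continuation of the boundary decomposition to the $2$-dimensional cones of $C(\Delta)$, yielding the explicit exponents $r-1$ for $(1-qT)$ and $\binom{r}{2}$ for $(1-q^2T)$---are essentially the paper's own route (the paper records this as $D(T)=(1-T)(1-qT)^{n+r}(1-q^2T)^{\binom{n+r}{2}+n+r}(1-\gamma_1T)h_1(T)$ and then feeds it into \eqref{eq24}), and your inclusion--exclusion arithmetic is correct; likewise your two geometric side claims (every pair of extremal rays spans a $2$-face when $n_i\ge 2$, and proper faces of dimension $\ge 3$ have no interior lattice points of weight $\le 2$) are true and only need the routine checks you flag. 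The genuine gap is the final exactness step, where you must show the leftover slope-$1$ root and the leftover slope-$2$ roots are not equal to $q$, resp.\ $q^2$. You do this by asserting that the piece of $\LF^*(f,T)^{(-1)^n}$ coming from the interior cone is \emph{pure} of weight $n+1$, citing ``weight theory'' plus Katz's bound transported through \eqref{eql}. Katz's estimate only gives an upper bound: every reciprocal root $\beta$ of $\LF^*(f,T)^{(-1)^n}$ has $|\beta|\le q^{(n+1)/2}$, i.e.\ weight at most $n+1$, which is already contained in Theorem \ref{thm2}. Purity is a \emph{lower} bound on the weights, and nothing in the paper (nor any standard result you can invoke as a black box here) supplies it; mixed toric L-functions of this kind are in general not pure after removing the face contributions, and determining which weights actually occur is precisely the hard content. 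Your fallback remark---that the $r$ leftover slope-$2$ roots have weight exactly $n-1$---is the same unproved purity assertion in different clothing.

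A telling symptom of the gap: your argument uses the hypothesis on the $n_i$ only through ``$n_i\ge 2$ and $n+1$ odd,'' so it would equally well ``prove'' the corollary for $n_1=3$, $n_2=5$. The paper's proof, by contrast, uses the evenness of each $n_i$ essentially: it makes $f$ an odd function, hence $S_k^*(f)$ is real and the reciprocal roots are stable under complex conjugation; combining $\ord_q\beta+\ord_q\overline{\beta}=\omega\in\Z\cap[0,n+1]$ (Theorem \ref{thm2}) with the symmetry $H_{\Delta}(k)=H_{\Delta}(n-k)$ of the slope multiplicities from Corollary \ref{corow} is what excludes $\gamma_1=q$ and any extra factor $(1-q^2T)$. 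To repair your proof, replace the purity claim by this conjugation/weight-symmetry argument (or else actually prove purity of the interior piece, which is a far stronger statement than anything established in the paper).
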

\begin{proof}
	Proof of this corollary follows from the proof of theorem \ref{thm35}.
	Similarly, when $i=2$, we have $\LF^*(f^{\overline{\Sigma}_2},T)=\frac{1}{1-T}$. Since the boundary of $\overline{\Sigma}_2$ consists of the origin and two sides, we obtain
	\begin{align*}
	D_2'(T)=\left(1-q^2T\right)\prod_{i=3}^{\infty}\left(1-q^iT\right)^{i-1}.
	\end{align*}
	By proposition \ref{prop c}, we have
	\begin{eqnarray}
	D(T)=\prod^{\infty}_{i=1}\left(1-\gamma_iT\right),
	\end{eqnarray}
	where $\#\{\gamma_i\ \rvert\   \mathrm{ord}_q\gamma_i=k\}=W_{\Delta}(k)$ for $k\in \mathbb{Z}_{\geq0}$. Note that $W_{\Delta}(1)=n+r+1$.
	Using Wan's boundary decomposition theorem and Dwork's trace formula, we have
	\begin{align*}
	D(T)
	&=(1-T)(1-qT)^{n+r}(1-q^2T)^{\binom{n+r}{2}+n+r}(1-\gamma_1T)h_1(T),\\
	\LF^*(f,T)^{(-1)^n}&= (1-T)(1-qT)^{r-1}(1-q^2T)^{\frac{r^2-r}{2}}\frac{1-\gamma_1T}{(1-\gamma_1qT)^{n+1}}h_2(T),
	\end{align*}
	where $|\gamma_1|_q=q^{-1}$ and the slopes of reciprocal roots of $h_2(T)$ are greater than 1. 
	
	Now we claim that $\gamma_1$ is non-real and $(1-q^2T) \nmid h_1(T)$. Let $\beta_i$ be a reciprocal root of $\LF^*(f,T)^{(-1)^n}$.
	Under the assumption that $b_i=1$ and $n_i$ is even for $1\leq i\leq n$,  $f$ is an odd function. Then the conjugate $\overline{\beta_i}$ is also a reciprocal root of $\LF^*(f,T)^{(-1)^n}$.
	Theorem \ref{thm2} implies that if $\ord_q\beta_i>(n+1)/2$, then $\beta_i$ is non-real and 
	\begin{align}\label{eq32}
	\ord_q\beta_i+\ord_q\overline{\beta_i}=\omega_i\in\Z\cap[0,n+1].
	\end{align}
	By theorem \ref{corow}, the Hodge number $H_{\Delta}(0)=H_{\Delta}(n)=1$, $H_{\Delta}(1)=H_{\Delta}(n-1)=r$ and $H_{\Delta}(2)=r+\frac{r^2-r}{2}$ since $n_i$ is even. Restricted by formula (\ref{eq32}), 
	$\gamma_1$ is non-real and 
	$(1-q^2T) \nmid h_1(T)$.
\end{proof}

%%%%%%%%%%%%%%%%%%%%%%%%%%%%%%%%%%%%%%%%%%%%%%%%%%%%%%%%%%%%%%%%%%%%%%%%%%%%%%%%%%%%%%%%

\nocite{*}
\bibliographystyle{amsalpha}
\bibliography{ref}

%\newpage

\end{document}